\DeclareSymbolFont{AMSb}{U}{msb}{m}{n}
\DeclareMathSymbol{\Z}{\mathbin}{AMSb}{"5A}
\DeclareMathSymbol{\R}{\mathbin}{AMSb}{"52}
\DeclareMathSymbol{\N}{\mathbin}{AMSb}{"4E}
\DeclareMathSymbol{\Q}{\mathbin}{AMSb}{"51}
\newcommand{\qftp}{\textup{qftp}}
\newcommand{\dcl}{\textup{dcl}}
\newcommand{\acl}{\textup{acl}}
\newcommand{\Th}{\textup{Th}}
\newcommand{\mc}[1]{\mathcal{#1}}
\newcommand{\mf}[1]{\mathfrak{#1}}
\newcommand{\ob}[1]{\overline{#1}}
\def\Ind{\setbox0=\hbox{$x$}\kern\wd0\hbox to 0pt{\hss$\mid$\hss}
\lower.9\ht0\hbox to 0pt{\hss$\smile$\hss}\kern\wd0}
\def\Notind{\setbox0=\hbox{$x$}\kern\wd0\hbox to 0pt{\mathchardef
\nn=12854\hss$\nn$\kern1.4\wd0\hss}\hbox to
0pt{\hss$\mid$\hss}\lower.9\ht0 \hbox to
0pt{\hss$\smile$\hss}\kern\wd0}
\newtheorem{thm}{Theorem}[section]
\newtheorem{lem}[thm]{Lemma}
\newtheorem{cor}[thm]{Corollary}
\newtheorem{prop}[thm]{Proposition}
\newtheorem{fact}[thm]{Fact}
\theoremstyle{definition}
\newtheorem{definition}[thm]{Definition}
\theoremstyle{remark}
\newtheorem{remark}[thm]{Remark}
\theoremstyle{remark}
\newtheorem{example}[thm]{Example}
\theoremstyle{remark}
\newtheorem{claim}[thm]{Claim}
\theoremstyle{remark}
\theoremstyle{remark}
\DeclareMathOperator{\inte}{int}
\begin{document}
\bibliographystyle{plain}

\title{Tame topology over definable uniform structures}

\author{Alfred Dolich and John Goodrick}

\begin{abstract}
A \emph{visceral structure} on $\mathfrak{M}$ is given by a definable base for a uniform topology on its universe $M$ in which all basic open sets are infinite and any infinite definable subset $X \subseteq M$ has non-empty interior. 

Assuming only viscerality, we show that the definable sets in $\mathfrak{M}$ satisfy some desirable topological tameness conditions. For example, any definable function $f: M \rightarrow M$ has a finite set of discontinuities; any definable function $f: M^n \rightarrow M^m$ is continuous on a nonnempty open set; and assuming definable finite choice, we obtain a cell decomposition result for definable sets. Under an additional topological assumption (``no space-filling functions''), we prove that the natural notion of topological dimension is invariant under definable bijections. These results generalize some of the theorems proved by Simon and Walsberg in \cite{Simon_Walsberg}, who assumed dp-minimality in addition to viscerality. In the final section, we construct new examples of visceral structures.
\end{abstract}

\maketitle

\section{Introduction}

The present work contributes to the growing body of results in model theory about topological tameness properties of definable sets in various classes of structures. We consider prototypical examples to be \textbf{o-minimal structures}, such as the theory of real closed fields, and \textbf{P-minimal structures}, such as the $p$-adic field. In both of these cases, the classes of definable sets and functions satisfy many desirable topological properties: definable functions are not too far from being continuous; there is a natural topological dimension function which is invariant under definable bijections; and definable sets (even in Cartesian powers of the structure) have \emph{cell decompositions}, which are finite partitions into pieces which are ``topologically nice.'' See \cite{HM} for the case of P-minimal fields, and see \cite{vdd} for o-minimality.

In this article, we introduce a new common generalization of o-minimality and P-minimality which we call \emph{viscerality}. This may be the most general class of theories studied so far in which it is reasonable to hope to prove cell decomposition and near-continuity of definable functions. As we point out below, this context includes the dp-minimal definable uniform structures investigated by Simon and Walsberg \cite{Simon_Walsberg}, but also includes structures which are not even NIP.

For visceral theories, we establish the following facts in this paper:

\begin{enumerate}
\item All definable functions are continuous almost everywhere (Proposition~\ref{contvisc}); 

\item Under the hypothesis of Definable Finite Choice (that is, the existence of Skolem functions for finite sets), there is a cell decomposition theorem (Theorem~\ref{celldecomp}); and 

\item Under an additional topological hypothesis (the absence of ``space-filling functions''), the natural topological dimension function in visceral theories is invariant under definable bijections (Theorem~\ref{dim_inv}). 
 
 \end{enumerate}

In the final section of the paper, we construct new examples of visceral expansions of ordered Abelian groups including an example with the independence property.

\subsection{Detailed summary of results}

We recall that a uniform structure on $M$ is given by a family $\Omega \subseteq \mathcal{P}(M \times M)$ such that each $E \in \Omega$ defines ``uniform balls'' $E[a]$ ``centered'' at points $a \in M$, which satisfy certain axioms. (In Sections 2 and 3 below, we will give precise definitions of all relevant concepts.) This framework gives a simultaneous generalization of the interval topology on an ordered Abelian group and the usual topology on the $p$-adic field.

Given a uniform structure on the universe $M$ of a structure $\mathfrak{M}$ with a definable base, we say that $\mathfrak{M}$ is \emph{visceral} if every ball is infinite and every infinite definable subset of $M$ has interior (by which we mean has {\em non-empty} interior). We say that the theory $T$ is visceral if all of its $\omega$-saturated models are. All topological notions below refer to the topology in which a neighborhood basis for each point $x \in M$ is given by the set of $M$-definable balls $D[x]$ centered at $x$, and we generally assume that $\mathfrak{M}$ is visceral and sufficiently saturated.



In Section 2, we recall the precise definition of a uniform structure and set some notational conventions. In Section 3 we introduce the concept of a visceral first-order theory (Definition~\ref{visceral}) and prove a series of general results: all definable unary functions are continuous (according to the visceral definable uniform topology) off a finite set (Proposition~\ref{contvisc}); a finite union of definable sets with empty interior has empty interior (Proposition~\ref{open}); and under the extra assumption of Definable Finite Choice (DFC), a cell decomposition theorem is obtained (Theorem~\ref{celldecomp}). Note that DFC automatically holds in all ordered structures and in all P-minimal fields. Next we define a natural topological dimension function on definable sets and show that it is invariant under definable bijections, at least if we make the extra assumption of ``no definable space-filling functions'' (Theorem~\ref{dim_inv}). We could not see how to establish this property for a general visceral theory nor construct a visceral example with space-filling functions, but at least some of the most important classes of examples (those which are dp-minimal or which satisfy algebraic exchange) have no such functions.

In the final part of Section 3, we discus the special case of an ordered Abelian group in which the interval topology yields a visceral uniform structure. We call such groups \emph{viscerally ordered}, and they were our original motivation for studying the more general concept of visceral structures.

In Section 4  we construct some  examples of viscerally ordered expansions of divisible Abelian groups including examples with the independence property.
\subsection{Comparison with related work}

Simon and Walsberg \cite{Simon_Walsberg} recently proved some similar results for visceral dp-minimal theories (although they did not call them such; what we call viscerality, they called ``(\textbf{Inf})''). For instance, they also proved that definable functions are continuous almost everywhere and that the natural topological dimension function is invariant under definable bijections. We do not assume dp-minimality or even NIP, and in that sense our results are more general; on the other hand, we needed Definable Finite Choice for our cell decomposition theorem and a few other results, whereas Simon and Walsberg compensate for the lack of DFC by decomposing definable sets into graphs of ``continuous multi-valued functions.''

In William Johnson's Ph.D. thesis \cite{Johnson_thesis}, it is shown that any dp-minimal, not strongly minimal field has a definable uniform structure which is visceral in our sense, furnishing many interesting examples of visceral theories.

The cell decomposition theorem \emph{par excellence} in model theory is that for o-minimal structures by Knight, Pillay, and Steinhorn \cite{KPS}. The cell decomposition theorem we obtain for viscerally ordered Abelian groups is obviously much weaker than this classic result, since, for instance, a $1$-cell for us may contain infinitely many connected components.

It is worth clarifying what our results mean in the special case of P-minimal fields. In the literature, there are now various different results which are known as ``cell decomposition'' for the $p$-adic field or more generally for P-minimal fields, of which the most celebrated is Denef's cell decomposition for semi-algebraic sets \cite{Denef}. But for us, the most relevant is a recent variation by Cubides-Kovacsics, Darni\`ere and Leenknegt \cite{Cub}, wherein they establish a ``Topological Cell Decomposition'' for P-minimal fields. Our Theorem~\ref{celldecomp} applies to the P-minimal case (where Definable Finite Choice and the exchange property for algebraic closure always hold), but our conclusion is slightly weaker than that of \cite{Cub} since we do not establish that the cells are ``good'' (either relatively open or relatively interior-free in the set we are decomposing). Nonetheless, our cell decomposition is still strong enough to derive what they call the Small Boundaries Property (see Corollary~\ref{small_boundaries} below). 


Our notion of viscerality is very similar to what Mathews called a ``$t$-minimal topological structure'' ($t$ stands for ``topological''). We recall that a \emph{first-order topological structure} is a first-order structure $\mathcal{M}$ on which there is a definable family $\{\varphi(M; \overline{a}) \, : \, \overline{a} \in M^n\}$ which forms a basis for a topology on the universe $M$, and this structure is called \emph{$t$-minimal} if the topology induced by the $\varphi(M; \overline{a})$ satisfies the following three conditions:

\begin{enumerate}
\item It is $T_1$;
\item There are no isolated points in $M$; and
\item Every definable $X \subseteq M$ has only finitely many boundary points.
\end{enumerate}

For comparison, our definition of ``visceral definable uniform structure'' is equivalent to (2) and (3) above plus the added condition that the topology comes from a definable uniform structure, but with no requirement that the topology be $T_1$. In \cite{Mathews}, Mathews proves a cell decomposition result for a certain class of $t$-minimal structures using the same definition of ``cell'' as we use (that is, a definable set such that some coordinate projection yields a homeomorphism onto an open set), but only in the case where the structure satisfies various other properties beyond mere $t$-minimality (elimination of quantifiers, finite Skolem functions, and the ``Local Continuity Property''). Rather confusingly, there is a competing definition of ``$t$-minimality'' in the literature from an unpublished note of Schoutens \cite{t_min}, in which yet another cell decomposition result is proven; however, Schouten's notion of ``$t$-minimal'' is more restrictive and fails to include even many weakly o-minimal ordered structures. 

Many thanks are in order to the anonymous referee for their extraordinarily helpful comments regarding the original version of this paper. Thanks to their efforts, many of the arguments herein have been substantially clarified (and in some cases, corrected). Any mistakes that may remain are of course our own fault.

\section{Uniform Structures}

Here we review some basic definitions and results concerning uniform structures. We do not claim that anything here is new, and in fact all of this material can be found in the textbook \cite{James_uniform}, but we include it here to make our paper more self-contained since it seems not to be very widely known.

We use the following notation for $D, E \subseteq M \times M$: $$D^{-1} = \{(y,x) \, | \, (x,y) \in D\};$$ $$D \circ E = \{(x,y) \, | \, \exists z \left[ (z, y) \in D \textup{ and } (x,z) \in E \right] \}.$$ We use $D^2$ as shorthand for $D \circ D$.

\begin{definition}
\label{uniform_struct}
Given a set $M$, a \emph{uniform structure on $M$} is a collection $\Omega \subseteq \mathcal{P}(M \times M)$ such that;
\begin{enumerate}
\item $\Omega$ is a \emph{filter}: that is, if $D, E \in \Omega$ then $D \cap E \in \Omega$, and if $D \in \Omega$ and $D \subseteq E \subseteq M \times M$, then $E \in \Omega$;
\item $\Delta M \subseteq D$ for all $D \in \Omega$, where $\Delta M = \{(x,x) \, : \, x \in M\}$;
\item If $D \in \Omega$, then $D^{-1} \in \Omega$; and
\item If $D \in \Omega$, then there is some $E \in \Omega$ such that $E^2 \subseteq D$.
\end{enumerate}
\end{definition}

In the context of a uniform structure as above, the sets $D \in \Omega$ are often called \emph{entourages}.

\begin{definition}
\label{base}
A \emph{base for a uniform structure on $M$} is a collection $\mathcal{B} \subseteq \mathcal{P}(M \times M)$ such that:
\begin{enumerate}
\item $\mathcal{B}$ is a \emph{base for a filter}: that is, $\mathcal{B} \neq \emptyset$ and if $D_1, D_2 \in \mathcal{B}$ then there is some $E \in \mathcal{B}$ such that $E \subseteq D_1 \cap D_2$;
\item $\Delta M \subseteq D$ for all $D \in \mathcal{B}$;
\item If $D \in \mathcal{B}$, then $E \subseteq D^{-1}$ for some $E \in \mathcal{B}$; and
\item If $D \in \mathcal{B}$, then there is some $E \in \mathcal{B}$ such that $E^2 \subseteq D$.
\end{enumerate}
\end{definition}

Given a base $\mathcal{B}$ for a uniform structure on $M$, the uniform structure $\Omega$ generated by $\mathcal{B}$ is simply the filter on $M \times M$ generated by $\mathcal{B}$, that is, the collection of all $E \subseteq M \times M$ such that there is some $D \in \mathcal{B}$ such that $D \subseteq E$. Conversely, to describe a uniform structure $\Omega$ on $M$, it is sufficient to give a subcollection $\mathcal{B} \subseteq \Omega$ which is a base and which generates $\Omega$.

For examples of uniform structures, suppose that $M$ is endowed with a pseudometric $\rho \, : \, M \rightarrow [0, \infty)$ (i.e. $\rho$ is symmetric, vanishes on $\Delta M$, and satisfies the triangle inequality, but $\rho(x,y) = 0$ does not necessarily imply that $x = y$), in which case there is a corresponding uniform structure on $M$ which is generated by the base $\{D_\varepsilon \, : \, \varepsilon \in (0, \infty) \}$, where $$D_\varepsilon := \{(x,y) \in M^2 \, | \, \rho(x,y) < \varepsilon\}.$$ In fact, as observed in chapter 1 of \cite{James_uniform}, a partial converse is true: any uniform structure on $M$ with a countable base arises from some pseudometric on $M$ via the construction above.\footnote{Thus uniform structures may seem like only a mild generalization of pseudomentrics, but for the present work we prefer the point of view that comes from thinking directly in terms of entourages without being encumbered by having to deal with numerical values of ``$\varepsilon$''.}

Given a uniform structure $\Omega$ on $M$, $D \in \Omega$, and $x \in M$, the set $$D[x] := \{y \in M : (x,y) \in D\}$$ is called a \emph{ball (centered at $x$)}.  

\begin{definition}
\label{unif_top}
If $\Omega$ is a uniform structure on $M$, {\em the uniform topology on $M$ induced by $\Omega$} is the topology such that $U \subseteq M$ is open if for every $x \in U$ there is $D \in \Omega$ so that $D[x] \subseteq U$.  
\end{definition}

The fact that the construction above yields a topology ($\emptyset$ and $M$ are open; the intersection of two open sets, and the union of arbitrary collections of open sets, are open) is left as a straightforward exercise to the reader. Note that the sets $D[x]$ are not necessarily open in this topology, but rather form a base for the neighborhoods of $x$ (fixing an $x \in M$, varying $D \in \Omega$).

An alternative way to describe the uniform topology on $M$ induced by $\Omega$ is that the topological closure of $X \subseteq M$ is $$\overline{X} = \bigcap_{D \in \Omega} \bigcup_{a \in X} D[a].$$ The equivalence of this with Definition~\ref{unif_top} follows from Proposition 3.6 of \cite{James_uniform}.

Now we discuss topological separation properties of  uniform topologies.

\begin{definition}
A uniform structure $\Omega$ on $M$ is \emph{separated} if $$\bigcap_{D \in \Omega} D = M \times M.$$
\end{definition}

A simple way to construct non-separated uniform structures on $M$ is to note that for any equivalence relation $E$ on $M$, the set $\{E\}$ is the base for a uniform structure on $M$, and unless $E$ is the equality relation, this structure will not be separated. Also note that when $E$ is not equality, the corresponding uniform topology is not $T_0$.

\begin{prop}
\label{separation_axioms}
If $\mathcal{T}$ is the uniform topology generated by a uniform structure $\Omega$ on $M$, then the following conditions are all equivalent:
\begin{enumerate}
\item $\Omega$ is separated.
\item $\mathcal{T}$ is $T_0$: for any two distinct points $x, y \in M$, the collection of neighborhoods of $x$ is not equal to the collection of neighborhoods of $y$.
\item $\mathcal{T}$ is $T_1$: for any point $x \in M$, the set $\{x\}$ is closed.
\item $\mathcal{T}$ is $T_2$ (Hausdorff): for any two distinct points $x, y \in M$, there are neighborhoods $U$ of $x$ and $V$ of $y$ which do not intersect.
\end{enumerate}
\end{prop}

\begin{proof}
That (1) is equivalent to (3) is Proposition 3.5 of \cite{James_uniform}, and that (1) is equivalent to (4) follows from the discussion following Proposition~3.7 of \cite{James_uniform}. Therefore the only nontrivial implication left to check is that (2) implies (3). Suppose that $\mathcal{T}$ is not $T_1$, so there is some point $x \in M$ such that $\{x\}$ is not closed. This means that there is some $y \in M$ with $y \neq x$ such that for every $D \in \Omega$, the neighborhood $D[x]$ of $x$ contains $y$. To show that $\mathcal{T}$ is not $T_0$, it will be sufficient to prove that these points $x$ and $y$ have the same neighborhoods, or that any ball centered at $x$ contains a sub-ball centered at $y$, and conversely any ball centered at $y$ contains a sub-ball centered at $x$.

So suppose that $D[x]$ is any ball centered at $x$. By Definition~\ref{uniform_struct} there is some $E \in \Omega$ such that $E^2 \subseteq D$. By our assumption that $y$ lies in all neighborhoods of $x$, we have that $y \in E[x]$.  Hence if $z$ is any element of $E[y]$, we have that $(x,y) \in E$ and $(y,z) \in E$, thus $(x,z) \in D$, or in other words, $z \in D[x]$; therefore $E[y] \subseteq D[x]$, and $D[x]$ is a neighborhood of $y$. Conversely, suppose that $D[y]$ is any ball centered at $y$. By Definition~\ref{uniform_struct}, $D^{-1} \in \Omega$, and also there is some $E \in \Omega$ such that $E^2 \subseteq D^{-1}$. Then if $z$ is any element of the ball $E^{-1}[x]$, we have $(z,x) \in E$, and also $(x,y) \in E$ (again using our assumption on the points $x$ and $y$), so that $(z,y) \in E^2 \subseteq D^{-1}$, and hence $z \in D[y]$, as desired.

\end{proof}

If $\Omega$ is a uniform structure on $M$, then $M^n$ has the usual product topology, and we will often refer to topological properties of subsets $X$ of $M^n$ accordingly. We will also refer to \emph{balls} $B \subseteq M^n$, which are simply products $B_1 \times \ldots \times B_n$ of balls $B_i = D_i[x_i]$ as defined in the previous paragraph.

\section{Cell Decomposition and Dimension in Visceral Theories}

Now we come to the main definitions of the paper. Throughout, ``definable'' means $A$-definable for some set of parameters $A$.

We note that much of the work in this section has parallels in \cite{MMS} and \cite{Mathews} though the context of the current paper is different from that considered in those papers.

\begin{definition}
If $\mathfrak{M} = (M, \ldots)$ is a structure, a \emph{definable uniform structure on $\mathfrak{M}$} is a base $\mathcal{B}$ for a uniform structure on $M$ which is uniformly definable: that is, there are formulas $\varphi(x, y; \overline{z})$ and $\psi(\overline{z})$ (possibly over parameters from $M$) such that $$\mathcal{B} = \{\varphi(M^2; \overline{b}) \, : \, \mathfrak{M} \models \psi(\overline{b}) \}.$$ 
\end{definition}

\begin{remark}
\label{symmetry}

If $\mathcal{B}$ is a definable uniform structure, there is no harm in further assuming that every $D \in \mathcal{B}$ is \emph{symmetric} (that is, $D^{-1} = D$), since we can replace each $D$ in our base by $D \cap D^{-1}$ if necessary, and this will generate the same uniform structure on $\mathfrak{M}$. From now on, we always assume that definable uniform structures have this property.
\end{remark}

\begin{definition}
\label{visceral_def}
 We say that a definable uniform structure $\mathcal{B}$ on $\mathfrak{M}$ is \emph{visceral} if it satisfies the following two properties:
\begin{enumerate}
\item For every $x \in M$ and every $D \in \mathcal{B}$, the set $D[x]$ is infinite.
\item If $X \subseteq M$ is definable and infinite, then $X$ has nonempty interior in the uniform topology.
\end{enumerate}

\end{definition}

Note that condition (1) in the definition above is equivalent to saying that every ball in the uniform structure generated by $\mathcal{B}$ is infinite, since such balls are of the form $E[x]$ for which there is some $D \in \mathcal{B}$ with $D \subseteq E$. The second condition was called ``(\textbf{Inf})'' in the paper \cite{Simon_Walsberg}.

For our first simple consequence of viscerality, we recall that two points $x$ and $y$ in a topological space are called \emph{topologically indistinguishable} if the set of all neighborhoods of $x$ is equal to the set of all neighborhoods of $y$. Topological indistinguishability (via the uniform topology) yields an equivalence relation on visceral definable uniform structures whose classes we will denote by $[x]_\sim$.

\begin{lem}
\label{indist}
In a visceral definable uniform structure $\mathcal{B}$ on $\mathfrak{M}$, each class $[x]_\sim$ is finite.
\end{lem}

\begin{proof}
If some class $[x]_\sim$ were infinite, then $[x]_\sim \setminus \{x\}$ would be an infinite definable set without interior, contradicting the definition of viscerality.
\end{proof}

Although it seems that most ``natural'' examples of visceral definable uniform structures are $T_0$ (and hence Hausdorff, by Proposition~\ref{separation_axioms}), there are some which are not. We thank the anonymous referee for suggesting the following example.

\begin{example}
Let $M = \R \setminus \{0\}$ and let $\mathfrak{M} = (M, \prec, R)$ where $x \prec y$ is interpreted as $|x| < |y|$ and $R(x,y,z)$ is interpreted as $|x \cdot y| = |z|$. In this structure, there is a family of formulas $\varphi(x,y; a, b)$ such that whenever $a$ and $b$ are positive real numbers, $$\mathcal{M} \models \varphi(x,y; a, b) \Leftrightarrow a \cdot |x| < |y| < b \cdot |x|.$$ Let $\mathcal{B}$ be the family of all such $\varphi(x,y; a,b)$ with $0 < a < 1 < b$. It is a simple exercise to check that this family $\mathcal{B}$ satisfies the four conditions in the definition of being a base for a uniform structure in Definition~\ref{base}. (For example, the same uniform structure can be generated by the symmetric entourages $$D_a := \{(x,y) \in (\R \setminus \{0\})^2 \, : \, \frac{|x|}{a} < |y| < a \cdot |x| \}$$ as $a$ varies over all real numbers greater than $1$, and for condition (4), note that for any $a > 1$, we can pick $E = D_{\sqrt{a}}$ so that $E^2 = D_a$.)

In  the uniform topology, the numbers $x$ and $-x$ are topologically indistinguishable for any $x \in M$, so the topology is not $T_0$. In fact, this topology is homeomorphic to the infinite segment $(0, \infty)$ with the usual topology but with each point ``doubled.''

For the viscerality of $\mf{M}$, each ball $D[x]$ defines a union of two nonempty open intervals in $\R$, hence is infinite.  If $X \subseteq M$ is infinite and  $\mf{M}$-definable with parameters $\ob{a}=a_1 \dots a_n$ then since the relations $\prec$ and $R$ are definable in the o-minimal structure $(\R; <, 0, \cdot)$ there is an open interval $(c,d) \subseteq X$ so that 
$(c,d) \cap \ob{a}= \emptyset$ and either $c>0$ or $d<0$.   Consider the function $\sigma_{\ob{a}}: M \to M$ given by:
\[\begin{cases} \sigma_{\ob{a}}(x)=x & \text{if } x=\pm a_i \text{ for } 1 \leq i \leq n \\
\sigma_{\ob{a}}(x)=-x & \text{otherwise} \end{cases}.\]  Notice that $\sigma_{\ob{a}}$ is an automorphism of $\mf{M}$ fixing $\ob{a}$. But then $U=(c,d) \cup \sigma_{\ob{a}}[(c,d)] \subseteq X$.  As $U$ is open in the uniform topology, $X$ has non-empty interior and thus $\mf{M}$ is visceral.  
\end{example}

It will be convenient to reformulate condition (1) in the definition of viscerality so that it is clearly expressible by a singe first-order sentence. We thank the anonymous referee for suggesting the proof of the following Lemma (which is not immediately obvious if $\mathcal{B}$ is not separated):

\begin{lem} 
\label{visc_2}
 A definable uniform structure $\mathcal{B}$ on $\mathfrak{M}$ is visceral if and only if it satisfies the following two properties:
\begin{enumerate}
\item For any $D \in \mathcal{B}$ and any $x \in M$, there is an $E \in \mathcal{B}$ such that $E[x] \subsetneq D[x]$.
\item If $X \subseteq M$ is definable and infinite, then $X$ has nonempty interior in the uniform topology.
\end{enumerate}

\end{lem}

\begin{proof}
Note that condition (2) of the Lemma is identical to (2) of the definition of viscerality. On the one hand, if $\mathcal{B}$ satisfies condition (1) of the Lemma, then for any $D \in \mathcal{B}$ and $x \in M$, we can iteratively apply this condition to find $D, E_1, E_2, \ldots$ such that $D[x] \supsetneq E_1[x] \supsetneq E_2[x] \ldots$, and hence $D[x]$ is infinite.

On the other hand, suppose that $\mathcal{B}$ is visceral, and we will show that condition (1) of the Lemma holds. Suppose that $D \in \mathcal{B}$ and $x \in M$. By condition (1) from the definition of viscerality, $D[x]$ is infinite, and by Lemma~\ref{indist} there is some $y \in D[x]$ such that $y \nsim x$. This means that there is some $E \in \mathcal{B}$ such that either $y \notin E[x]$ or $x \notin E[y]$; but since $\mathcal{B}$ is assumed to be symmetric (see Remark~\ref{symmetry}), in fact $y \notin E[x]$. Now we can find some $E_1 \in \mathcal{B}$ such that $E_1 \subseteq E \cap D$, and $E_1[x] \subsetneq D[x]$, as desired.
\end{proof}

We record another consequence of viscerality which will be useful later:

\begin{lem}
\label{nested_balls}
Given a ball $B$ in a visceral definable uniform structure and $x \in B$, there is some sub-ball $B' \subseteq B$ such that $x \notin B'$.
\end{lem}

\begin{proof}
Say $B = D[y]$ and fix $x \in B$. By viscerality, $B$ is infinite; and since $B$ is infinite and definable, its interior $B^\circ$ is also infinite. By Lemma~\ref{indist}, there is some $z \in B$ such that $z \notin [x]_\sim$ and $z \in B^\circ$. Choose some entourage $E \in \mathcal{B}$ such that $(x,z) \notin E$, and recall our assumption that every $E \in \mathcal{B}$ is symmetric. Since $z \in B^\circ$, there is some $E_0 \in \mathcal{B}$ such that $E_0(z) \subseteq B$. Now pick $E_1 \in \mathcal{B}$ such that $E_1 \subseteq E_0 \cap E$ and let $B' = E_1[z]$. On the one hand, $B' = E_1[z] \subseteq E_0[z] \subseteq B$, so $B'$ is a sub-ball of $B$; and on the other hand, $B' \subseteq E[z]$ and $(z,x) \notin E$, so $x \notin B'$.
\end{proof}

\begin{definition}
\label{visceral}
The complete theory $T$ is \emph{visceral} if there is an $\omega$-saturated model $\mathfrak{M} \models T$ such that $\mathfrak{M}$ admits a visceral definable uniform structure.
\end{definition}

\begin{lem}
\label{elem_ext}
If $\mathfrak{M}$ is $\omega$-saturated and admits a visceral definable uniform structure, then any $\mathfrak{M}' \succ \mathfrak{M}$ also has a visceral definable uniform structure given by the same formulas.
\end{lem}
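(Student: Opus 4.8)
The plan is to show that the family $\mathcal{B}' := \{\varphi((M')^2;\bar b) : \mathfrak{M}' \models \psi(\bar b)\}$ defined on $M'$ by the same two formulas is again a visceral definable uniform structure, by checking its defining properties one at a time. Let $\bar m \in M$ be the finite tuple of parameters occurring in $\varphi$ and $\psi$. I would first note that each axiom in Definition~\ref{base} becomes a first-order sentence over $\bar m$ once one reads ``$D \in \mathcal{B}'$'' as ``$\psi(\bar b)$'' and ``$E \subseteq D$'' as $\forall x\forall y(\varphi(x,y;\bar e) \to \varphi(x,y;\bar b))$, and takes $\circ$ and ${}^{-1}$ to be the uniformly definable operations on relations of the form $\varphi(\cdot,\cdot;\bar b) \subseteq M' \times M'$; the symmetry convention $D^{-1}=D$ is likewise first-order over $\bar m$ (and renders clause (3) of Definition~\ref{base} automatic). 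The same holds for clause (1) of viscerality, which unwinds to: for every $\bar b$ with $\psi(\bar b)$ and every $x$ there is $\bar e$ with $\psi(\bar e)$, $\forall y(\varphi(x,y;\bar e)\to\varphi(x,y;\bar b))$ and $\exists y(\varphi(x,y;\bar b)\wedge\neg\varphi(x,y;\bar e))$. Each of these sentences holds in $\mathfrak{M}$ by hypothesis, hence in $\mathfrak{M}'$ since $\mathfrak{M}\prec\mathfrak{M}'$; so $\mathcal{B}'$ is a definable uniform structure on $\mathfrak{M}'$ satisfying the first viscerality condition.

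The one genuinely non-formal point is clause (2) of viscerality for $\mathfrak{M}'$: every infinite $\mathfrak{M}'$-definable subset of $M'$ has nonempty interior. Here I would argue formula by formula. Fix a parameter-free formula $\theta(x;\bar z)$. For $n\in\N$ let $\lambda_n(\bar z)$ be $\exists x_1\cdots\exists x_n(\bigwedge_{i<j}x_i\neq x_j\wedge\bigwedge_i\theta(x_i;\bar z))$, expressing $|\theta(\cdot;\bar z)|\geq n$, and let $\eta_\theta(\bar z)$ be $\exists x\,\exists\bar w(\psi(\bar w)\wedge\forall y(\varphi(x,y;\bar w)\to\theta(y;\bar z)))$, expressing that $\theta(\cdot;\bar z)$ contains a ball of $\mathcal{B}'$, i.e. has nonempty interior. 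I claim there is $N=N_\theta\in\N$ with $\mathfrak{M}\models\forall\bar z(\lambda_N(\bar z)\to\eta_\theta(\bar z))$: if not, the partial type $\{\lambda_n(\bar z):n\in\N\}\cup\{\neg\eta_\theta(\bar z)\}$ over the finite set $\bar m$ would be finitely satisfiable in $\mathfrak{M}$, hence realized by some $\bar b\in M$ by $\omega$-saturation, and then $\theta(M;\bar b)$ would be an infinite $\mathfrak{M}$-definable set with empty interior, contradicting the viscerality of $\mathcal{B}$ on $\mathfrak{M}$. Now $\forall\bar z(\lambda_{N_\theta}(\bar z)\to\eta_\theta(\bar z))$ is first-order over $\bar m$, so it holds in $\mathfrak{M}'$; hence if $\bar a\in M'$ and $\theta(M';\bar a)$ is infinite, then $\mathfrak{M}'\models\lambda_{N_\theta}(\bar a)$ and therefore $\mathfrak{M}'\models\eta_\theta(\bar a)$, i.e. $\theta(M';\bar a)$ contains a ball and has nonempty interior. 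Since every $\mathfrak{M}'$-definable subset of $M'$ is of the form $\theta(M';\bar a)$ for some parameter-free $\theta$ and some $\bar a\in M'$, clause (2) holds in $\mathfrak{M}'$, and the lemma follows.

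I expect the step requiring care to be exactly this last one: the condition ``$\theta(\cdot;\bar z)$ is infinite and interior-free'' on $\bar z$ is not first-order (the ``infinite'' half is only a countable conjunction), so one cannot transfer it directly. The use of $\omega$-saturation of $\mathfrak{M}$ is precisely what converts ``infinite'' into a uniform finite threshold $N_\theta$ attached to each formula $\theta$, after which the relevant statement is a genuine first-order sentence over $\bar m$ and passes up the elementary extension. Everything else in the proof is the routine translation of the uniform-structure and viscerality axioms into sentences over the parameters of $\varphi$ and $\psi$.
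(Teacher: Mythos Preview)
Your proof is correct and follows essentially the same strategy as the paper's: the base axioms and clause~(1) of viscerality are first-order over the parameters of $\varphi,\psi$ and transfer directly, while clause~(2) is handled via the $\omega$-saturation of $\mathfrak{M}$. The only cosmetic difference is that the paper argues by contradiction, realizing in $M$ the type (over $\bar m$) of a hypothetical bad parameter tuple $\bar a\in M'$, whereas you extract a uniform threshold $N_\theta$ per formula and transfer the resulting first-order sentence---two standard packagings of the same compactness argument.
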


\begin{proof}
All of the axioms for being the base of a uniform structure (see Definition~\ref{base}) are clearly first-order and hence are preserved by elementary extensions, as is clause (1) of the definition of Lemma~\ref{visc_2}. As for clause (2) of Lemma~\ref{visc_2}, if there were some infinite $\overline{a}$-definable subset $\theta(M^{\prime}; \overline{a})$ of $M^{\prime}$ without interior, then we could pick some $\overline{a}$ from $M$ with the same type as $\overline{a}$, and $\theta(M; \overline{a})$ would be an infinite definable subset of $M$ without interior.
\end{proof}

One might naturally wonder whether there could be a structure $\mathfrak{M}$ which admits a visceral definable uniform structure $\mathcal{B}$ but such that in an $\omega$-saturated extension $\mathfrak{M}'$ of $\mathfrak{M}$, the corresponding definable uniform structure $\mathcal{B}'$ is not visceral. We do not know the answer to this question. Macpherson, Marker, and Steinhorn constructed an example (in \cite{MMS}, section 2.5) of a structure  $\mathfrak{M}$ which is densely ordered and weakly o-minimal, but such that in an $\omega$-saturated elementary extension of $\mathfrak{M}$ one can define an infinite discrete subset of the universe. However, there is no group structure definable on the universe of $\mathfrak{M}$, so while $\mathfrak{M}$ is ``visceral'' with respect to the usual order topology (in the sense that every infinite definable subset of the universe has interior), this topology does not seem to be the uniform topology of any definable uniform structure on $\mathfrak{M}$.

\textbf{From now until the end of this section, we assume that $T$ is a visceral theory and we work within some fixed $\omega$-saturated model $\mathfrak{M} \models T$.} Note that \emph{any} $\omega$-saturated model $\mathfrak{M}$ will support a visceral definable uniform structure, and by the previous Lemma, there is no harm in assuming that $\mathfrak{M}$ is a universal ``monster model.''

\textbf{Here and below, we will fix some visceral definable uniform structure $\mathcal{B}$ on $\mathfrak{M}$,} and all topological concepts (``open,'' ``continuous,'' etc.) will refer to this uniform topology, or to the corresponding product topology on $M^n$. Of course there may be other definable uniform structures on $\mathcal{M}$ other than $\mathcal{B}$, and not all of these may be visceral (see Example~\ref{dp_2} below).

We begin by recalling a very basic fact, which was also proved in \cite{Simon_Walsberg}.

\begin{prop} Any visceral theory satisfies \textbf{uniform finiteness}: for any $n$-tuple $\overline{y}$ of variables and any formula $\theta(x; \overline{y})$ there is an $N \in \omega$ such that for every $\overline{b} \in M^n$, if $\theta(M; \overline{b})$ is finite, then $|\theta(M; \overline{b})| \leq N$.
\end{prop}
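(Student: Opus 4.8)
The plan is to deduce \textbf{uniform finiteness} from the fact that, for each formula $\theta(x;\overline{y})$, the condition ``$\theta(\cdot;\overline{b})$ is infinite'' is equivalent, uniformly across all models of $T$, to a single first-order formula (equivalently, $T$ eliminates $\exists^{\infty}$); the Proposition is then a routine compactness argument. Writing our fixed visceral definable uniform structure as $\mathcal{B}=\{\varphi(M^2;\overline{z})\,:\,\models\psi(\overline{z})\}$, the relevant formula is
\[
\chi_\theta(\overline{y})\ :\equiv\ \exists a\,\exists\overline{z}\,\big(\psi(\overline{z})\wedge\forall w\,(\varphi(a,w;\overline{z})\to\theta(w;\overline{y}))\big),
\]
which asserts exactly that $\theta(\cdot;\overline{y})$ contains a ball, i.e.\ has nonempty interior.

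First I would note that balls are infinite: given $D\in\mathcal{B}$ and $x\in M$, iterating clause~(1) of viscerality yields a strictly decreasing chain $D[x]\supsetneq E_1[x]\supsetneq E_2[x]\supsetneq\cdots$, and picking $a_n\in E_n[x]\setminus E_{n+1}[x]$ (with $E_0=D$) produces infinitely many distinct points. Consequently, in \emph{every} model of $T$, $\chi_\theta(\overline{b})$ implies $\theta(\cdot;\overline{b})$ is infinite, so if $\theta(\cdot;\overline{b})$ is finite then $\neg\chi_\theta(\overline{b})$. The converse — $\theta(\cdot;\overline{b})$ infinite implies $\chi_\theta(\overline{b})$ — is precisely clause~(2) of viscerality, and the key observation is that this holds not just in $\mathfrak{M}$ but in every model of $T$. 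Indeed, if some $\mathfrak{N}_0\models T$ had an infinite $\overline{a}$-definable subset $\theta(N_0;\overline{a})$ with empty interior, one passes to a common elementary extension $\mathfrak{M}'$ of $\mathfrak{N}_0$ and $\mathfrak{M}$ (which exists since both model the complete theory $T$); by Lemma~\ref{elem_ext}, $\mathfrak{M}'$ is visceral. But in $\mathfrak{M}'$ the set $\theta(M';\overline{a})\supseteq\theta(N_0;\overline{a})$ is still infinite, while $\mathfrak{M}'\models\neg\chi_\theta(\overline{a})$ since $\neg\chi_\theta$ is first-order and $\mathfrak{N}_0\prec\mathfrak{M}'$ — contradicting viscerality of $\mathfrak{M}'$.

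Having established that, in all models of $T$, $\theta(\cdot;\overline{b})$ is infinite $\iff$ $\chi_\theta(\overline{b})$, the partial type $\{\neg\chi_\theta(\overline{y})\}\cup\{\exists^{\geq k}x\,\theta(x;\overline{y}):k<\omega\}$ is unsatisfiable, so compactness yields $N\in\omega$ with $\neg\chi_\theta(\overline{y})\vdash\exists^{\leq N}x\,\theta(x;\overline{y})$; since finiteness of $\theta(\cdot;\overline{b})$ implies $\neg\chi_\theta(\overline{b})$, this $N$ bounds the size of every finite instance. The one genuinely delicate step is the transfer of clause~(2) to arbitrary, possibly non-saturated models; the point to keep in mind there is that although ``infinite'' is not first-order, an infinite definable set stays infinite (indeed only grows) under elementary extension, whereas ``has empty interior'' is first-order and transfers freely in both directions — so the two ingredients match up and no saturation hypothesis beyond that already supplied by Lemma~\ref{elem_ext} is required.
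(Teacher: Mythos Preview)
Your proof is correct and rests on the same key observation as the paper's: ``has nonempty interior'' is a first-order condition $\chi_\theta(\overline{y})$ which, by viscerality, is equivalent to ``$\theta(\cdot;\overline{y})$ is infinite,'' so compactness produces the uniform bound. The paper's argument is simply the two-line version of yours: finite sets have no interior (since balls are infinite), so if uniform finiteness failed, $\omega$-saturation of $\mathfrak{M}$ would realize the type $\{\neg\chi_\theta(\overline{y})\}\cup\{\exists^{\geq k}x\,\theta(x;\overline{y}):k<\omega\}$ inside $\mathfrak{M}$ itself, yielding an infinite definable set with empty interior and contradicting viscerality directly.

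Your detour through arbitrary models --- showing clause~(2) transfers to every $\mathfrak{N}_0\models T$ via a common elementary extension with $\mathfrak{M}$ and Lemma~\ref{elem_ext} --- is sound but unnecessary for the statement as phrased, which concerns only the fixed $\omega$-saturated $\mathfrak{M}$. What your longer route does buy is the explicit conclusion that $T$ eliminates $\exists^\infty$ uniformly across all models, which is a slightly stronger (and useful) way to package the result; the paper leaves this implicit.
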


\begin{proof} 
Note that any finite set $X \subseteq M$ has no interior since every ball is infinite. So if uniform finiteness failed for $T$, then by compactness we would have an infinite definable discrete $X \subseteq M$, violating viscerality.
\end{proof}

Another easy observation is that visceral structures are $t$-minimal in the sense of Mathews \cite{Mathews}:

\begin{lem}
\label{finite_frontier}
If If $\mathfrak{M}$ admits a visceral definable uniform structure, then for any definable $X \subseteq M$, all but finitely many points of $X$ are in its interior.
\end{lem}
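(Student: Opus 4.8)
The plan is to show that $Y := X \setminus \inte(X)$ is finite by appealing directly to clause (2) of viscerality, once we observe that $Y$ is itself a definable set. The crucial point is that the interior operator preserves definability here precisely because the uniform structure is \emph{definable}: writing $\varphi(x,y;\ob{z})$ and $\psi(\ob{z})$ for the formulas presenting $\mathcal{B}$, and recalling that the balls $D[a]$ with $D \in \mathcal{B}$ form a neighborhood basis at $a$, a point $a \in M$ lies in $\inte(X)$ if and only if
$$\exists \ob{z}\,\bigl(\psi(\ob{z}) \wedge \forall y\,(\varphi(a,y;\ob{z}) \rightarrow y \in X)\bigr).$$
This is a first-order condition (over the parameters used to define $X$ together with those presenting $\mathcal{B}$), so $\inte(X)$ is definable, and hence so is $Y = X \setminus \inte(X)$.

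Now suppose toward a contradiction that $Y$ is infinite. By clause (2) of viscerality, the definable set $Y$ has nonempty interior, so there exist $a \in M$ and $D \in \mathcal{B}$ with $D[a] \subseteq Y$. Since $\Delta M \subseteq D$ we have $a \in D[a]$, so in particular $a \in Y \subseteq X$. But $D[a]$ is a basic open set contained in $Y \subseteq X$, so $D[a] \subseteq \inte(X)$; in particular $a \in \inte(X)$. This contradicts $a \in Y = X \setminus \inte(X)$. Therefore $Y$ is finite, which is exactly the assertion that all but finitely many points of $X$ lie in $\inte(X)$.

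I do not anticipate any real obstacle beyond recording the definability of $\inte(X)$; everything else is an immediate unwinding of the definitions. (One could note in passing that the same argument does not require full saturation of $\mathfrak{M}$, only that $\mathfrak{M}$ carry a visceral definable uniform structure, which is why the lemma is stated at this level of generality; the hypothesis of $\omega$-saturation in Definition~\ref{visceral} is used elsewhere.)
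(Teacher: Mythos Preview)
Your argument is correct and is essentially the same as the paper's: both observe that $X \setminus \inte(X)$ is definable and derive a contradiction from viscerality if it were infinite (it would contain a ball, which would then lie in $\inte(X)$). You have simply spelled out in more detail why $\inte(X)$ is definable and why the contradiction arises.
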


\begin{proof}
The set $X \setminus \textup{int}(X)$ is definable, so if it were infinite, it would contain a ball, which is absurd, since any ball $D[x]$ is a neighborhood of the point $x$.
\end{proof}

Now we will begin to do a finer analysis of definable sets and functions in a visceral theory. To begin with, definable unary functions are well behaved:

\begin{prop}\label{contvisc} If $f: M \to M$ is definable then $f$ is continuous at all but finitely points.
\end{prop}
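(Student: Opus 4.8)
The plan is to argue by contradiction: suppose $f$ is discontinuous at infinitely many points, and use viscerality to extract from this an infinite definable set with empty interior, contradicting Lemma~\ref{finite_frontier} (or rather the second clause of viscerality directly). First I would make precise what discontinuity means in the uniform topology: $f$ is discontinuous at $a$ iff there is some $D \in \mathcal{B}$ such that for every $E \in \mathcal{B}$, $f(E[a]) \not\subseteq D[f(a)]$. Since $\mathcal{B}$ is uniformly definable, the set $\mathrm{Disc}(f) = \{a \in M : f \text{ is discontinuous at } a\}$ is definable (the witnessing $D$ can be quantified within the definable family). So if $\mathrm{Disc}(f)$ is infinite, by viscerality it has nonempty interior, hence contains a ball $B_0 = D_0[c]$ on which $f$ is discontinuous at every point.

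Next I would try to derive a contradiction from having a whole ball of discontinuities. The key tool should be a ``shrinking'' or Ramsey-type argument on the fibers of $f$ restricted to $B_0$. One natural route: consider, for each $a \in B_0$ and each $D \in \mathcal{B}$, the set $f^{-1}(D[f(a)]) \cap B_0$. If for \emph{some} $D$ the restriction of $f$ to $B_0$ has all fibers of $D$-balls finite, then $f$ is ``finite-to-one up to $D$-balls'' and one can hope to show $f$ is actually continuous somewhere on $B_0$ by a counting/viscerality argument (an infinite definable set cannot map onto an infinite definable set with all fibers finite and still be everywhere discontinuous). Alternatively, I would consider the image $f(B_0)$: if it is finite, then some fiber is infinite, hence (by viscerality) has interior, and on the interior of that fiber $f$ is locally constant and so continuous — contradiction. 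So $f(B_0)$ is infinite, hence has nonempty interior; pulling back an appropriate sub-ball and iterating, one tries to stabilize the behavior.

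The cleanest approach, and the one I would actually write up, is probably to reduce to injectivity. By uniform finiteness (the Proposition just proved) the fibers of $f$ are either infinite or of bounded finite size; the set where the fiber is infinite is definable, and where it's nonempty-but-finite we can use Definable Finite Choice... but wait — viscerality alone is assumed here, not DFC. So instead: partition $M$ into the (finitely many, by uniform finiteness at each size) pieces according to fiber size, discard the finite pieces (finite union of sets, only finitely many relevant), and on the infinite pieces either a fiber is infinite (handled above: it has interior, $f$ is locally constant there) or $f$ restricted to that piece is finite-to-one. On a finite-to-one piece $X$, I claim $f$ is continuous somewhere: the image $f(X)$ is infinite, hence has interior, take a ball $B' \subseteq f(X)$, and consider $g = f^{-1}$ as a relation; using that balls are ``small'' (clause (1) of viscerality lets us shrink), one shows the oscillation of $f$ must be controlled on a sub-ball.

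\textbf{Main obstacle.} The hard part will be the last step — showing that an everywhere-discontinuous definable function on a ball is impossible without any NIP/dp-minimality hypothesis and without DFC. The temptation is to use a selection function for the ``choice of bad direction'' $D$ at each point, but that requires choice for finite (or definable) sets. I expect the actual argument goes through a direct topological/compactness trick: for a fixed $D \in \mathcal{B}$ witnessing discontinuity on a sub-ball (extracted by another application of viscerality to the definable set $\{a \in B_0 : D \text{ witnesses discontinuity at } a\}$ for suitable $D$), one looks at $Y_D := \{a : f(a) \in D[f(a_0)]\}$-type sets and their interiors, and shows the boundary behavior forces an infinite definable discrete set or an infinite definable set with empty interior. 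Pinning down exactly which definable set to contradict viscerality with — and checking it really is definable given that $\mathcal{B}$ is only a \emph{base}, so "there exists $E \in \mathcal{B}$" is an infinite disjunction that must be replaced by a single formula via the base being uniformly definable — is where the care is needed.
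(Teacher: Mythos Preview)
Your setup is correct and matches the paper: the discontinuity locus is definable, hence if infinite contains a ball $B$; on $B$ the function must be finite-to-one (else some fiber contains a ball on which $f$ is constant). But you explicitly stop at the ``main obstacle'' without resolving it, and the gestures you make toward closing it (fiber-size partitioning, $Y_D$-type sets, ``stabilizing by iteration'') do not amount to an argument. The partition by fiber size buys nothing beyond finite-to-one-ness, and there is no evident way to extract an infinite definable discrete set from the $Y_D$ picture alone.

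The paper supplies two ideas you are missing. First, it proves an \emph{accumulation-point Claim}: after further shrinking $B$, for every $x \in B$ and every pair $D,E \in \mathcal{B}$ there is $y \neq x$ in $D[x]$ with $f(y) \in E[f(x)]$. This is obtained by a saturation argument: if the Claim fails on a sub-ball, one uniformizes the witnessing $D_x, E_x$ to fixed $D^*, E^*$, shrinks so that the whole ball sits inside a single $D^*$-ball, and observes that then every point of the image is isolated in the image --- an infinite definable set with empty interior, contradicting viscerality. Second, the paper introduces a \emph{definable sort} whose elements are nonempty downward-closed subsets of the index set for $\mathcal{B}$, and proves (Lemma~\ref{boundf}) that any definable map from a ball into such a sort is uniformly bounded below on some sub-ball. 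One then sets $g(x) = \{E \in \mathcal{B} : \text{for all } D,\ \exists y \in D[x]\ f(y) \notin E[f(x)]\}$, a definable map into this sort. Lemma~\ref{boundf} would give a sub-ball on which some fixed $E$ lies in every $g(x)$; but the accumulation-point Claim lets one find, inside any sub-ball, a smaller ball on which the $f$-values all lie within an $E$-ball of each other, forcing $E \notin g(x)$ there --- contradiction.

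So the genuine gap is precisely the machinery of definable sorts plus Lemma~\ref{boundf}, together with the accumulation-point reduction. Your worry about ``there exists $E \in \mathcal{B}$'' being an infinite disjunction is handled not by eliminating the quantifier over $\mathcal{B}$, but by packaging the whole set of witnessing $E$'s as a single value in an imaginary sort and then applying compactness there.
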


We postpone the proof, first establishing a fundamental Lemma.  Also we note that a result similar to the preceding Proposition was established in the context of dp-minimal densely ordered Abelian groups in \cite{Goodrick_dpmin}.  

In the study of weakly o-minimal structures, Macpherson, Marker, and Steinhorn \cite{MMS} used imaginary sorts encoding Dedekind cuts, which they called ``definable sorts.'' We will need to generalize this to our context. In the definition below, the sets $Z_{\overline{c}}$ are somewhat analogous to initial segments of an ordered structure.

\begin{definition}
Recall that the base $\mathcal{B}$ is presented as $$\mathcal{B} = \{\varphi(M^2; \overline{b}) \, : \, \overline{b} \in Z \}$$ where $Z \subseteq M^k$ is definable. A \emph{definable sort} is a definable family $A = \{Z_{\overline{c}} \, : \, \overline{c} \in W\}$ (where $W \subseteq M^\ell$ is definable) such that each $Z_{\overline{c}}$ is a nonempty definable subset of $Z$ which is ``downward closed:'' that is, if $\overline{b}_1, \overline{b}_2 \in Z$, $\overline{b}_1 \in Z_{\overline{c}}$, and $$\varphi(M^2, \overline{b}_2) \subseteq \varphi(M^2, \overline{b}_1),$$ then $b_2 \in Z_{\overline{c}}$.
\end{definition}

By abuse of notation, we will not distinguish between a definable sort $A$ and the definable set $W \subseteq M^\ell$ as in the definition above, and a \emph{definable function} $f: M^n \rightarrow A$ is synonymous with a definable function $f: M^n \rightarrow W$ in the usual sense.

Now we have the following simple Lemma, which is like Lemma 3.10 from \cite{MMS}. 

\begin{lem}\label{boundf}  Suppose that $f: B \to A$ is definable where $B \subseteq M$ is a ball and $A$ is a definable sort. Then there is a ball $B' \subseteq B$ and some $E \in \mathcal{B}$ such that for every $x \in B'$, $E \in f(x)$.
\end{lem}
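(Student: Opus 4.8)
The plan is to track, for each ball $E \in \mathcal{B}$, which points $x \in B$ ``see'' $E$ inside the cut $f(x)$. Writing the base as $\mathcal{B} = \{\varphi(M^2;\overline{b}) : \overline{b} \in Z\}$, for $\overline{b} \in Z$ set
$X_{\overline{b}} := \{ x \in B : \overline{b} \in Z_{f(x)} \}$, which is exactly the set of $x \in B$ with $\varphi(M^2;\overline{b}) \in f(x)$ in the sense of the stated abuse of notation. Combining the formula defining $f$, the formula presenting the sort $A$ (so that ``$\overline{b} \in Z_{\overline{c}}$'' is definable in $\overline{b},\overline{c}$), and the formula presenting $\mathcal{B}$, one sees that $\{X_{\overline{b}}\}_{\overline{b} \in Z}$ is a uniformly definable family of subsets of $B$. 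It suffices to produce a single $\overline{b} \in Z$ for which $X_{\overline{b}}$ has nonempty interior: then any basic ball $B' \subseteq \mathrm{int}(X_{\overline{b}}) \subseteq B$ and $E := \varphi(M^2;\overline{b})$ work, since $x \in B'$ forces $\overline{b} \in Z_{f(x)}$, i.e.\ $E \in f(x)$.

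Two structural facts about this family do the work. First, it is directed upward under inclusion: given $\overline{b}_1, \overline{b}_2 \in Z$, clause (1) of Definition~\ref{base} provides $E_3 \in \mathcal{B}$, say $E_3 = \varphi(M^2;\overline{b}_3)$ with $\overline{b}_3 \in Z$, such that $\varphi(M^2;\overline{b}_3) \subseteq \varphi(M^2;\overline{b}_1) \cap \varphi(M^2;\overline{b}_2)$; since each $Z_{f(x)}$ is downward closed, $\overline{b}_1 \in Z_{f(x)}$ (or $\overline{b}_2 \in Z_{f(x)}$) implies $\overline{b}_3 \in Z_{f(x)}$, so $X_{\overline{b}_1} \cup X_{\overline{b}_2} \subseteq X_{\overline{b}_3}$. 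Second, $\bigcup_{\overline{b} \in Z} X_{\overline{b}} = B$, because for each $x \in B$ the set $Z_{f(x)}$ is nonempty by the definition of a definable sort, so some $\overline{b}$ lies in it.

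Now I claim some $X_{\overline{b}}$ is infinite. If not, then by uniform finiteness (the Proposition preceding this Lemma, applied to the definable family $\{X_{\overline{b}}\}_{\overline{b}\in Z}$) there is $N \in \omega$ with $|X_{\overline{b}}| \le N$ for every $\overline{b} \in Z$. But $B$ is a ball, hence infinite, so choose distinct $x_0,\dots,x_N \in B$; pick $\overline{b}_i \in Z_{f(x_i)}$ for each $i$ and, by finitely many applications of directedness, a single $\overline{b}^\ast \in Z$ with $X_{\overline{b}_i} \subseteq X_{\overline{b}^\ast}$ for all $i \le N$. Then $\{x_0,\dots,x_N\} \subseteq X_{\overline{b}^\ast}$, contradicting $|X_{\overline{b}^\ast}| \le N$. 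Hence some $X_{\overline{b}}$ is infinite, and clause (2) of viscerality gives it nonempty interior; as explained above this yields the desired $B'$ and $E$.

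The argument is essentially bookkeeping once it is set up correctly, so the only real obstacle is making sure the families are genuinely definable (so that uniform finiteness is applicable) and that the directedness is used correctly, after which viscerality supplies the interior ``for free.'' Note in particular that this proof uses only that $\mathfrak{M}$ carries a visceral definable uniform structure, not any saturation hypothesis.
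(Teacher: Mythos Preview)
Your proof is correct and follows the same essential strategy as the paper: exploit downward-closedness to get directedness of the sets $X_{\overline{b}} = \{x \in B : E \in f(x)\}$, show that one of them is infinite, and then invoke viscerality to extract an interior ball. The difference is in how you obtain an infinite $X_{\overline{b}}$. The paper picks an infinite sequence $\{b_i : i \in \omega\}$ in $B$ and uses $\omega$-saturation (what the paper calls ``compactness'') directly: finite satisfiability of ``$E \in f(b_i)$ for all $i$'' comes from the same directedness you isolated, and saturation then hands over a single $E$. You instead route through uniform finiteness, arguing by contradiction that a uniform bound on $|X_{\overline{b}}|$ would clash with directedness over any $N+1$ points.

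Your closing remark deserves a small caveat. The argument as you wrote it indeed does not invoke saturation, but the uniform finiteness proposition you cite was itself proved in the paper via compactness in an $\omega$-saturated model. That said, uniform finiteness is a property of the \emph{theory}, so once it is established (in some saturated model), your argument does go through in any model carrying a visceral definable uniform structure; in that sense your version is a genuine, if modest, strengthening of the paper's proof.
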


\begin{proof}   Pick pairwise distinct $\{ b_i : i \in \omega\}$ in $B$. By the fact that elements of $A$ are downward closed plus compactness, there is some $E \in \mathcal{B}$ so that $E \in f(b_i)$ for all $i \in \omega$.  Thus the set $\{x \in B: E \in f(x)\}$ is infinite, and hence has interior by viscerality, so it contains a sub-ball $B'$ of $B$ as desired.
\end{proof}

\textit{Proof of Proposition~\ref{contvisc}:}  Suppose for contradiction that $f$ is discontinuous at infinitely many points.  Hence by viscerality we may find a ball $B \subseteq M$ so that $f$ is discontinuous on each $x \in B$.

We begin by noting that there is $N \in \omega$ so that if $y \in f[B]$ then $f^{-1}(y) \cap B$ has size at most $N$.  Otherwise by compactness there is $y \in f[B]$ so that $f^{-1}(y) \cap B$ is infinite.  By viscerality there is a sub-ball $B'$ of $B$ so that $f(x)=y$ for all $x \in B'$.  But then $f$ is continuous on $B'$, violating our assumption on $B$.

Next we show that, without loss of generality, every point of the graph $\Gamma(f)$ of $f$ is an accumulation point of $\Gamma(f)$ in a strong sense (every neighborhood of every point of the graph contains infinitely many other points on the graph):

\begin{claim}
\label{omega_acc}
After replacing $B$ with some sub-ball if necessary, we may further assume that if $x \in B$ and $D, E \in \mathcal{B}$ then there are infinitely many $y \in D[x]$ such that that $f(y) \in E[f(x)]$.
\end{claim}

Suppose the Claim were false. Thus for any sub-ball $B'$ of $B$, there is some $x \in B'$ and $D, E \in \mathcal{B}$ such that $(D[x] \times E[f(x)]) \cap \Gamma(f)$ is finite. We claim that there is some $m \in \N$ such that for infinitely many $x \in B$, the set $(D[x] \times E[f(x)]) \cap \Gamma(f)$ has size at most $m$: for otherwise there would be only countably many points $x_0, x_1, \ldots$ in $B$ such that $(D[x] \times E[f(x)]) \cap \Gamma(f)$ is finite, and by applying Lemma~\ref{nested_balls} repeatedly, we could find a descending chain $B_0 \supseteq B_1 \supseteq \ldots$ of sub-balls of $B$ such that $x_0, \ldots, x_i \notin B_i$, and then by $\omega$-saturation we could obtain a sub-ball of $B$ which does not contain any point $x_i$, contradicting our hypothesis. Since the set $$\{x \in B \, : \, |(D[x] \times E[f(x)]) \cap \Gamma(f)| \leq m \}$$ is definable and infinite, by viscerality it contains some sub-ball $B'$ of $B$, and from now on we replace $B$ by this sub-ball $B'$.

By $\omega$-saturation of $\mathfrak{M}$, there is some $D^* \in \mathcal{B}$ such that for infinitely many $x \in B$, there exists some $E \in \mathcal{B}$ such that $|(D^*[x] \times E[f(x)]) \cap \Gamma(f)| \leq m$, and so by viserality this is true of every point $x$ in some sub-ball $B'$ of $B$, and again we replace $B$ by $B'$. By the same argument, we can also pick some $E^* \in \mathcal{B}$ such that, without loss of generality, for every $x \in B$, we have $|(D^*[x] \times E^*[f(x)]) \cap \Gamma(f)| \leq m.$

Now fix some $x \in B^\circ$. Pick some $D_0 \in \mathcal{B}$ such that $D_0[x] \subseteq B$ and also $D_0^2 \subseteq D^*$. We will show that the image $f(D_0[x])$ of the ball $D_0[x]$ under $f$ has no interior. Towards a contradiction, suppose that $E_0[y] \subseteq f(D_0[x])$, and without loss of generality $E_0 \subseteq E^*$. Choose $z \in D_0[x]$ such that $f(z) = y$. For any other $w \in D_0[x]$, since $(w,x)$ and $(x,z)$ are in $D_0$, we have that $w \in D^*[z]$; but as $z \in D_0[x] \subseteq B$, there are at most $m$ elements $w \in D^*[z]$ such that $f(w) \in E^*[f(z)]$, and in particular there are only finitely many $w \in D_0[x]$ such that $f(w) \in E_0[f(z)] = E_0[y]$. This is absurd, since by viserality $E_0[y]$ is infinite and was supposed to be contained in the image of $D_0[x]$.

Thus the function $f$ maps the infinite definable set $D_0[x]$ onto a set with no interior, so by viscerality the image $f(D_0[x])$ of $D_0[x]$ must be finite. But this contradicts our observation above that every fiber $f^{-1}(y)$ is finite, finishing the proof of Claim~\ref{omega_acc}.

For $x \in B$ let $g(x)$ be the set
\[\{E \in \mathcal{B} : \text{for all } D \in \mathcal{B} \text{ there is } y \in D[x] \text{ so that } f(y) \notin E[f(x)] \}.\]  Note that $g(x) \neq \emptyset$ on all of $B$ (as $f$ is discontinuous on all of $B$) and $g(x)$ is downward-closed, so $g$ is a definable function from $B$ into some definable sort.  

Now let $E \in \mathcal{B}$ and $B^{\prime}$ be any smaller ball contained in $B$. Take $E_0 \in \mathcal{B}$ such that $E_0^2 \subseteq E$, and pick any $x$ in the interior of $B'$, so say $D[x] \subseteq B'$ for $D \in \mathcal{B}$. By Claim~\ref{omega_acc}, there are infinitely many $y \in D[x]$ such that $f(y) \in E_0[f(x)]$, so by viscerality there is an even smaller ball $B''$ contained in $D[x]$ such that for any $y \in B''$, we have $f(y) \in E_0[f(x)]$. So for any $y_0, y_1 \in B''$, we have $(f(y_0), f(x)), (f(x), f(y_1)) \in E_0$, hence $(f(y_0), f(y_1)) \in E$; therefore for any $y_0 \in B''$, $E \notin g(y_0)$. But since $B'' \subseteq B'$ and $E$, $B'$ were chosen arbitrarily, this contradicts Lemma~\ref{boundf}. This finishes the proof of Proposition~\ref{contvisc}. \qed

\medskip

Next we work towards generalizing Proposition \ref{contvisc} to functions in an arbitrary number of variables.  To this end we need to establish a series of technical lemmas.  Our proof, in general outline, is similar to  proofs in Section 4 of \cite{MMS}, although the details are quite different.

\begin{lem}\label{qcont}  Suppose that $\mathfrak{M}$ is $\omega$-saturated and admits a visceral definable uniform structure. Then for every $n \in \N$, we have:

\begin{enumerate} \item[(I)]  Let $B \subseteq M^n$ be a ball (that is, a cartesian product $B_1 \times \ldots \times B_n$ of balls $B_i$) and $A$ a definable sort.  Suppose that $f: B \to A$ is definable. Then there is some $E \in \mathcal{B}$ so that $\{\ob{x} \in B : E \in f(\ob{x})\}$ has non-empty interior.

\item[(II)] Let $X \subseteq M^{n+1}$ be definable and let $\pi: M^{n+1} \to M^n$ be the projection onto the first $n$ coordinates.  Suppose that $\pi[X]$ has non-empty interior and there is $b \in M$ so that $b$ is in the interior of $X_{\ob{a}}$ (the fiber of $X$ above $\ob{a}$) for each $\ob{a} \in \pi[X]$. Then $X$ has non-empty interior. 

\end{enumerate}
\end{lem}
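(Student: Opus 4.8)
\textbf{Proof plan for Lemma~\ref{qcont}.}

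The plan is to prove both statements simultaneously by induction on $n$. Statement (I) for $n=1$ is exactly Lemma~\ref{boundf} (a ball in $M^1$ is just a ball), so the base case is already done; statement (II) for $n=0$ (interpreting $M^0$ as a point) just says that a nonempty definable subset of $M^1$ whose unique fiber has $b$ in its interior has nonempty interior, which is immediate. For the inductive step, I would assume both (I) and (II) hold at stage $n$ and prove (I) at stage $n+1$, then (II) at stage $n+1$.

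For (I) at stage $n+1$: write $B = B_0 \times B'$ where $B_0 \subseteq M$ is a ball and $B' \subseteq M^n$ is an open ball, and view $f : B_0 \times B' \to A$. For each fixed $\ob{x}' \in B'$, the function $t \mapsto f(t,\ob{x}')$ on $B_0$ is a definable map from a ball into $A$, so by Lemma~\ref{boundf} there is a sub-ball of $B_0$ and an $E \in \mathcal{B}$ witnessed on it. The first key move is to make this choice uniform: by saturation (compactness), there is a single $E^* \in \mathcal{B}$ such that for all $\ob{x}'$ in some infinite — hence interior-containing, by viscerality — definable subset of $B'$, and in fact after shrinking, for all $\ob{x}'$ in a sub-ball $B'' \subseteq B'$, the set $\{t \in B_0 : E^* \in f(t,\ob{x}')\}$ is infinite. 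Now consider $X = \{(t,\ob{x}') \in B_0 \times B'' : E^* \in f(t,\ob{x}')\}$. By the inductive hypothesis (I) applied to the map $\ob{x}' \mapsto (\text{the downward-closed set of balls } E \text{ such that } E \in f(t,\ob{x}') \text{ for infinitely many } t)$ — which is a definable sort-valued function on $B''$ — one extracts, after possibly shrinking $E^*$ and $B''$, a situation where for each $\ob{x}'$ in an open subset the fiber $X_{\ob{x}'} \subseteq B_0$ is infinite, hence contains a ball; then shrinking further via another compactness argument makes $b$ lie in the interior of $X_{\ob{x}'}$ uniformly, and inductive hypothesis (II) at stage $n$ gives that $X$ itself has nonempty interior, which is what (I) at stage $n+1$ asserts. (Some care is needed to set up the right sort-valued function so that (I) at stage $n$ applies cleanly; this bookkeeping is the routine part.)

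For (II) at stage $n+1$: here $X \subseteq M^{n+2}$, $\pi : M^{n+2} \to M^{n+1}$, and $\pi[X]$ has nonempty interior with $b$ interior to each fiber $X_{\ob{a}}$, $\ob{a} \in \pi[X]$. Shrink $\pi[X]$ to an open ball $B = B_1 \times \cdots \times B_{n+1}$ inside it. For $\ob{a} \in B$, let $f(\ob{a}) \in \mathcal{B}$ be chosen so that $f(\ob{a})[b] \subseteq X_{\ob{a}}$; more precisely, $\ob{a} \mapsto \{E \in \mathcal{B} : E[b] \subseteq X_{\ob{a}}\}$ is a downward-closed, nonempty, definable sort-valued function on $B$, so by (I) at stage $n+1$ (just proved) there is $E \in \mathcal{B}$ and an open ball $B^* \subseteq B$ with $E[b] \subseteq X_{\ob{a}}$ for all $\ob{a} \in B^*$; then $B^* \times E[b]$ is an open ball contained in $X$. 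The main obstacle in the whole argument is organizing the interleaving of (I) and (II) in the induction and, in particular, repeatedly applying saturation to replace "infinitely many $\ob{x}'$" by "an open ball of $\ob{x}'$" while keeping the chosen ball $E^* \in \mathcal{B}$ and the target point of the fibers uniform; none of the individual steps is deep, but getting the quantifier order right so that viscerality and the inductive hypotheses can be invoked is where the real work lies.
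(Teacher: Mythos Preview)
Your plan for $(II)_{n+1}$ is correct and is exactly what the paper does: form the sort-valued map $\ob{a}\mapsto\{E:E[b]\subseteq X_{\ob a}\}$ and apply $(I)$ at the same level.

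The gap is in your argument for $(I)_{n+1}$. Two points:

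First, a minor one: when you write ``infinite --- hence interior-containing, by viscerality --- definable subset of $B'$,'' note that $B'\subseteq M^n$ and viscerality only tells you this for $n=1$. You can repair this by applying the inductive hypothesis $(I)_n$ to a suitable sort-valued function on $B'$ instead of invoking viscerality, and indeed you later do so; but as written the first pass is not justified.

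Second, and this is the real problem: the step ``shrinking further via another compactness argument makes $b$ lie in the interior of $X_{\bar x'}$ uniformly'' is not routine and is precisely where the work lies. After you obtain $E^*$ and an open $U\subseteq B'$ such that each fiber $X_{\bar x'}\subseteq B_0$ has interior, there is no obvious compactness move producing a single $b$ interior to all fibers over some open $U'\subseteq U$: the fibers can move, and a priori no fixed point of $B_0$ need lie in any of them. The natural attempt --- take $N+1$ fixed points $t_0,\ldots,t_N$ in $B_0$ (with $N$ the uniform bound on boundary points), arrange by $(I)_n$ that all $t_i$ lie in each $X_{\bar x'}$, and conclude that some $t_i$ is interior --- leaves you with a finite cover $U=\bigcup_i\{\bar x':t_i\in\inte X_{\bar x'}\}$ of an open set in $M^n$, and to extract an open piece you would need ``a finite union of interior-free sets has empty interior,'' which is Proposition~\ref{open}, proved \emph{after} and \emph{using} the present lemma.

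The paper avoids this by a genuinely different device. Passing to a very saturated model, it chooses a grid $a^i_j\in B_i$ with $j$ ranging over the $i$-th uncountable cardinal $\omega_i$, and uses saturation to find a single $E$ with $E\in f(a^1_{j_1},\ldots,a^{m+1}_{j_{m+1}})$ for all tuples. Because $|X\setminus\inte X|$ is finite for definable $X\subseteq M$ (Lemma~\ref{finite_frontier}) and $\omega_l>|\omega_1\times\cdots\times\omega_{l-1}|$, a cofinality argument (run from the last coordinate backwards) produces specific indices $j_1^*,\ldots,j_{m+1}^*$ at which the iterated ``interior in the $l$-th variable'' predicates $Z_l$ all hold. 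One then builds open sets $U_1\subseteq M,\,U_2\subseteq M^2,\ldots$ forward, applying $(II)_l$ at \emph{each} level $l\le m$ (not just $l=n$) with $b=a^{l+1}_{j_{l+1}^*}$ as the uniform interior point guaranteed by the previous step. The increasing cardinals are exactly what make the ``uniform $b$'' exist at every stage; this is combinatorics, not compactness, and your outline does not supply a substitute for it.
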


\begin{proof}  Let $(I)_n$ and $(II)_n$ be the claims of the Lemma specialized to a fixed value of $n \in \N$. We prove the lemma by induction on $n$ showing that the truth of $(I)_k$ and $(II)_k$ for all $k \in \{0, \ldots, n\}$ implies $(I)_{n+1}$, and that $(I)_n$ implies $(II)_n$. 

If $n=0$, both $(I)_0$ and $(II)_0$ are trivial.  Thus assume that $n=m+1$ and we have established $(I)_k$ and $(II)_k$ for every $k \leq m$.  We must first show that $(I)_n$ holds.  Fix $B=B_1 \times \dots \times B_{m+1}$ where the $B_i$ are balls and $f: B \to A$.  Without loss of generality we may assume that $\mathfrak{M}$ is very saturated.   Pick pairwise distinct $a^i_j \in B_i$ for $1 \leq i \leq m+1$ and $j \in \omega_i$, where $\omega_j$ denotes the $j$-th uncountable cardinal.  Also pick $E \in \mathcal{B}$ so that $E \in f(a^1_{j_1}, \dots, a^n_{j_n})$ for all $j_1 \dots j_n \in \omega_1 \times \dots \times \omega_n$.  We fix some notation for various definable sets.

Let $Z_{E}=\{\ob{x} \in B : E \in f(\ob{x})\}$.  We also define sets $Z_l \subseteq M^{m+1}$ for $1 \leq l \leq m+1$ recursively working backwards from $Z_{m+1}$.  Let $Z_{m+1}$ be the set\[\{ (y_1, \dots, y_{m+1}) \in B:  y_{m+1} \in \inte(Z_{E}(y_1, \dots, y_m, -))\},\] using the notation ``$Z_{E}(y_1, \dots, y_m, -)$'' to refer to the set of all $y$ such that $(y_1, \ldots, y_m, y) $ belongs to $Z_E$.  Given $Z_{l+1}$ let $Z_l$ be
\[\{(y_1, \dots, y_{m+1}) \in B : y_l  \in \inte(Z_{l+1}(y_1, \dots, y_{l-1},-,y_{l+1}, \dots, y_{m+1}))\}.\]  We claim that there are $j_1^*, \dots, j^*_{m+1} \in \omega_1 \times \dots \times \omega_{m+1}$ so that \[Z_l(a^1_{j_1}, \dots, a^{l-1}_{j_{l-1}}, a^l_{j^*_l}, \dots, a^{m+1}_{j^*_{m+1}})\] holds for all $j_1, \dots, j_{l-1} \in \omega_1 \times \dots \times \omega_{l-1}$ and all $l \in \{1, \ldots, m+1\}$.  We choose $j_1^*  \dots j_{m+1}^*$ recursively starting from $j_{m+1}^*$.  Note that for each 
$j_1, \dots j_m \in \omega_1 \times \dots \times \omega_m$ the set $\{x : (a^1_{j_1}, \dots a^m_{j_m},x) \in Z_{E}\}$ is infinite and by Lemma~\ref{finite_frontier} all but finitely many of its points lie in the interior; thus by cofinality considerations there must be $j_{m+1}^*$ as desired.  Given $j_{m+1}^*, \dots, j_{l+1}^*$ we note that for any fixed $j_1, \dots, j_{l-1} \in \omega_1 \times \dots \times \omega_{l-1}$ the set $\{x:Z_l(a^1_{j_1}, \dots, a^{l-1}_{j_{l-1}},x,j_{l+1}^*, \dots, j_{m+1}^*)\}$ is infinite and at most finitely many of its points are not in its interior, and once again by cofinality considerations we find $j^*_l$.

Now given $j_1^*, \dots, j_{m+1}^*$ we recursively construct open sets $U_1, \dots, U_{m+1}$ so that 
$U_l \subseteq B_1 \times \dots \times B_l$ and so that $Z_{l+1}(x_1, \dots, x_l, a^{l+1}_{j_{l+1}^*}, \dots a^{m+1}_{j_{m+1}^*})$ holds for all $(x_1, \dots, x_l) \in U_l$. For the case when $l = m+1$, we define $Z_{m+2}$ to be $Z_E$, and thus $U_{m+1}$ will be the set desired in order to establish $(I)_n$.  For $U_1$ note that as $Z_1(a^1_{j_1^*} \dots a^{m+1}_{j_{m+1}^*})$ holds there is an open neighborhood $U_1$ of $a^1_{j_1^*}$ so that $Z_2(x, a^2_{j_2^*}, \dots a^{m+1}_{j_{m+1}^*})$ for all $x \in U_1$.  Suppose we have constructed $U_l$.  Thus for each $(x_1, \dots, x_l) \in U_l$ we have that $Z_{l+1}(x_1, \dots, x_l, a^{l+1}_{j_{l+1}^*}, \dots, a^{m+1}_{j_{m+1}^*})$ holds.  As such $a^{l+1}_{j^*_{l+1}}$ lies in the interior of $Z_{l+2}(x_1, \dots, x_l, -, a^{l+2}_{j_{l+1}^*}, \dots, a^{m+1}_{j_{m+1}^*})$ for all $(x_1, \dots, x_l) \in U_l$.  Thus by $(II)_l$ there is $U_{l+1}$ as desired and thus establishing $(I)_{n}$.

Finally we show that $(I)_{n}$ implies $(II)_{n}$.  Thus suppose that $X \subseteq M^{n+1}$ and $b$ are as in the statement of $(II)_{m+1}$.  Without loss of generality we may assume that $\pi[X]$ is open.  For each $\ob{x} \in \pi[X]$ we let $$f(\ob{x})= \{E \in \mathcal{B} : E[b] \subseteq X_{\ob{x}} \}.$$  Thus we have $f: \pi[X] \to A$ for the associated definable sort $A$.  By $(I)_n$ there is $E \in \mathcal{B}$ and open $U \subseteq \pi[X]$ so that $E \in f(\ob{x})$ for all $\ob{x} \in U$.  Hence $U \times E[b] \subseteq X$ and we are done.
\end{proof}

The previous Lemma has the following useful consequence.

\begin{prop}\label{open}  If $X \subseteq M^n$ is definable and has non-empty interior and $X=X_1 \cup X_2$ with $X_1$ and $X_2$ both definable, then one of $X_1$ or $X_2$ has non-empty interior.
\end{prop}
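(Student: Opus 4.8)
The plan is to argue by induction on $n$, after one reduction. Since $X$ has non-empty interior it contains an open ball $B = B_1 \times \cdots \times B_n$, and $B = (X_1 \cap B) \cup (X_2 \cap B)$; so it suffices to prove the statement for $X$ an open ball, i.e.: if an open ball $B \subseteq M^n$ is a union of two definable sets, then one of them has non-empty interior. For $n=1$ this is immediate: $B$ is infinite, so one of the two pieces is infinite, and an infinite definable subset of $M$ has non-empty interior by viscerality. (Thus the Proposition is a multidimensional avatar of the finite/cofinite dichotomy in dimension $1$; note also that an $N$-fold version follows by iterating the $2$-fold one.)

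For the inductive step write $B = B' \times B_n$ with $B' \subseteq M^{n-1}$ an open ball, suppose $B = Y_1 \cup Y_2$, and assume for contradiction that neither $Y_i$ has non-empty interior. For each $\ob a \in B'$ we have $B_n = (Y_1)_{\ob a} \cup (Y_2)_{\ob a}$, so by the case $n=1$ at least one fiber $(Y_i)_{\ob a}$ has non-empty interior; the sets $W_i = \{\ob a \in B' : (Y_i)_{\ob a}\text{ has non-empty interior}\}$ are definable and cover $B'$, so by the inductive hypothesis applied to the ball $B'$, one of them, say $W_1$, has non-empty interior, and after shrinking we fix an open ball $U \subseteq B'$ with $U \subseteq W_1$. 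Now apply Lemma~\ref{qcont}(I) in dimension $n-1$ to the definable map sending $\ob a \in U$ to the (downward-closed) set $\{E \in \mathcal{B} : (\exists b)\, E[b] \subseteq (Y_1)_{\ob a}\}$ in the associated definable sort: there are $E_0 \in \mathcal{B}$ and an open ball $U_0 \subseteq U$ so that for every $\ob a \in U_0$ there is $b$ with $E_0[b] \subseteq (Y_1)_{\ob a}$. Fixing $E_1 \in \mathcal{B}$ with $E_1^2 \subseteq E_0$ and setting $P = \{(\ob a, b) \in U_0 \times M : (\exists b_0)\,[\,b \in E_1[b_0]\text{ and }E_0[b_0] \subseteq (Y_1)_{\ob a}\,]\}$, a short computation with the triangle-type inclusion $E_1^2 \subseteq E_0$ shows that $\pi[P] = U_0$, that $P \subseteq Y_1$, and that each fiber $P_{\ob a}$ (for $\ob a \in U_0$) is open and non-empty; moreover, if the slice $P^{b^\ast} = \{\ob a \in U_0 : (\ob a,b^\ast) \in P\}$ has non-empty interior for some $b^\ast$, then an open ball $U_1 \subseteq P^{b^\ast}$ gives $U_1 \times E_1[b^\ast] \subseteq Y_1$, contradicting that $Y_1$ has empty interior.

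So everything reduces to the following, which I expect to be the main obstacle: a definable $P \subseteq M^{n-1} \times M$ with $\pi[P]$ of non-empty interior and all fibers $P_{\ob a}$ ($\ob a \in \pi[P]$) of non-empty interior must itself have non-empty interior. Here the ``definable sort'' machinery stalls, because ``a ball $E_0[b]$ fits inside $P_{\ob a}$'' records the radius $E_0$ but not the center $b$, and the latter is exactly what one must pin down. My plan to handle it is an argument in the spirit of the proof of Proposition~\ref{contvisc}: use viscerality to find a ball $I^\ast \subseteq M$ inside the infinite set $\bigcup_{\ob a \in \pi[P]} P_{\ob a}$, pass to the transpose of $P \cap (\pi[P] \times I^\ast)$, pigeonhole on basic entourages via the saturation of $\mathfrak{M}$ to make one $E^\ast \in \mathcal{B}$ serve many fibers at once, and feed the resulting definable data back into Lemma~\ref{qcont}(I)--(II) and the inductive hypothesis of the Proposition, together with uniform finiteness to control the slices that turned out to have empty interior. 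The delicate point will be to arrange that this descent terminates --- contracting the base or the range to something where the $n=1$ viscerality case applies --- rather than merely reproducing the same ``missing center'' obstruction one coordinate lower.
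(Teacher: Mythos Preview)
Your setup (reduce to a box, induct on $n$, base case by viscerality) matches the paper, but there is a real gap exactly where you flag it. After uniformizing the radius $E_0$ via Lemma~\ref{qcont}(I), you still need a single point $b^\ast$ whose slice $P^{b^\ast}$ has non-empty interior, and the plan you sketch for this (saturation pigeonhole on entourages, transpose, feed back into the lemmas) is not a proof; as you yourself worry, it threatens to reproduce the same missing-center obstruction one coordinate down.

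The paper sidesteps this by never uniformizing the radius at all: it goes after the center directly. Fix a sequence $\{b_i : i \in \omega\}$ of distinct points in $B_n$ and claim that for some $i$ and some $j \in \{1,2\}$ there is an open $U \subseteq B'$ with $b_i \in \inte((X_j)_{\ob a})$ for every $\ob a \in U$; once you have this, Lemma~\ref{qcont}(II) applies immediately, since its hypothesis is precisely the existence of a single $b$ lying in the interior of every fiber. To prove the claim, suppose it fails for every $b_i$. For each $i$, decompose $B'$ into the four definable sets $\{\ob a : b_i \in \inte((X_j)_{\ob a})\}$ and $\{\ob a : b_i \in (X_j)_{\ob a} \setminus \inte((X_j)_{\ob a})\}$ for $j = 1,2$; by the inductive hypothesis one of them has interior, and since by assumption the first two do not, there is $j_i \in \{1,2\}$ and an open ball $U_i$ on which $b_i \in (X_{j_i})_{\ob a} \setminus \inte((X_{j_i})_{\ob a})$. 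Iterating this inside $U_{i-1}$ produces a nested chain $U_1 \supseteq U_2 \supseteq \cdots$; by compactness pick $\ob a \in \bigcap_i U_i$, and then one of $(X_1)_{\ob a}$, $(X_2)_{\ob a}$ has infinitely many non-interior points, contradicting Lemma~\ref{finite_frontier}. The decisive ingredient your plan omits is this one-variable finite-frontier lemma; it is what converts the pigeonhole into a contradiction rather than an endless descent.
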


\begin{proof}  We proceed by induction on $n$.  If $n=1$ the result is trivial by the viscerality assumption.  Hence assume that $n=m+1$. Without loss of generality $X$ is an open box of the form $V_1 \times \dots \times V_{m+1}$ in which the $V_i$ are nonempty definable open sets in the uniform topology and $X_1$ and $X_2$ are disjoint. Fix a sequence $\{b_i : i \in \omega\}$ of pairwise distinct elements of $V_{m+1}$.  We claim that for some $i < \omega$ there is an open set $U \subseteq V_1 \times \dots \times V_m$ and $j \in \{1,2\}$ so that $b_i$ is in the interior of $(X_j)_{\ob{a}}$ for all $\ob{a} \in U$.  So we suppose this fails, and show the following:

\begin{claim}
There is a nonempty open set $U_1 \subseteq V_1 \times \ldots \times V_m$ and $j_1 \in \{1,2\}$ such that $b_1 \in (X_{j_1})_{\ob{a}} \setminus \textup{int}((X_{j_1})_{\ob{a}})$ for all $\ob{a} \in U_1$.
\end{claim}

\begin{proof}
The set $Y := V_1 \times \ldots \times V_m$ can be definably decomposed into the following four pairwise disjoint subsets:

$$Y_1 := \{ \ob{a} \in Y \, : \, b_1 \in \textup{int}( (X_1)_{\ob{a}}) \},$$
$$Y_2 := \{ \ob{a} \in Y \, : \, b_1 \in \textup{int}( (X_2)_{\ob{a}}) \},$$
$$Y_3 := \{ \ob{a} \in Y \, : \, b_1 \in (X_1)_{\ob{a}} \setminus \textup{int} ( (X_1)_{\ob{a}} ) \},$$
and
$$Y_4 := \{ \ob{a} \in Y \, : \, b_1 \in (X_2)_{\ob{a}} \setminus \textup{int} ( (X_2)_{\ob{a}} ) \}.$$ 
By our assumption, neither of the two sets $Y_1$ nor $Y_2$ has interior, so by our induction hypothesis, at least one of the two sets $Y_3$ or $Y_4$ must have interior, and we can take $U$ to be the interior of either $Y_3$ or $Y_4$ and pick $j_1$ accordingly.

\end{proof}
  
 Repeating the argument used in the proof of the Claim above, we may construct an infinite chain $U_1 \supseteq U_2 \supseteq U_3 \supseteq \ldots$ of nonempty open sets $\{U_i : i \in \omega\}$ such that for each $i$ there is a $j_i \in \{1,2\}$ such that $b_i \in (X_{j_i})_{\ob{a}} \setminus \textup{int}((X_{j_i})_{\ob{a}})$ for all $\ob{a} \in U_i$. By compactness we may pick an $\ob{a} \in \bigcap_{i < \omega} U_i$. Then each $b_i$ is a non-interior point of either $(X_1)_{\ob{a}}$ or $(X_2)_{\ob{a}}$, so for some $j \in \{1,2\}$ the set $(X_j)_{\ob{a}} \setminus \textup{int}((X_j)_{\ob{a}})$ is infinite, contradicting Lemma~\ref{finite_frontier}.
 
 Hence there is an open set $U \subseteq V_1 \times \ldots \times V_m$, some $b_i \in V_{m+1}$, and $j \in \{1,2\}$ such that for every $\ob{a} \in U$, we have that $b_i \in \textup{int}((X_j)_{\ob{a}})$.  By Lemma \ref{qcont} $X_j$ has non-empty interior.
\end{proof}

Now we have our desired result on the continuity of functions in many variables.

\begin{thm}\label{gencont}  Suppose that $B \subseteq M^n$ is a ball and $f: B \to M$ is definable.  Then there is non-empty open definable $U \subseteq B$ such that $f$ is continuous on $U$.
\end{thm}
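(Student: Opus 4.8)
The plan is to induct on $n$, with Proposition~\ref{contvisc} providing the base case $n=1$. For the inductive step with $n = m+1$, I would proceed by analyzing $f$ one coordinate at a time. Write a point of $B$ as $(\ob{x}, t)$ with $\ob{x} \in B_1 \times \dots \times B_m$ and $t \in B_{m+1}$. For each fixed $\ob{x}$, the unary function $t \mapsto f(\ob{x}, t)$ is continuous off a finite set by Proposition~\ref{contvisc}; by uniform finiteness the size of this finite set is bounded, so by shrinking $B_{m+1}$ (and using a cofinality/pigeonhole argument over a saturated model, exactly as in the proof of Lemma~\ref{qcont}) I can arrange a smaller ball on which $t \mapsto f(\ob{x},t)$ is continuous for every $\ob{x}$ in some nonempty open set; similarly, by the inductive hypothesis applied to the functions $\ob{x} \mapsto f(\ob{x}, t)$ for fixed $t$, I get continuity in the first $m$ variables on an open set. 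The real content is to upgrade these "separate continuity" statements into joint continuity on a common open subset.

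The mechanism for this upgrade is the machinery already developed: for a ball $B' \subseteq B$ and a basic entourage $E \in \mathcal{B}$, consider the set of points $(\ob{x},t) \in B'$ that are "$E$-good," meaning there is some $D \in \mathcal{B}$ with $f[D[(\ob{x},t)]] \subseteq E[f(\ob{x},t)]$. More precisely, I would define, for each $(\ob{x},t) \in B$, the set $$h(\ob{x},t) = \{ E \in \mathcal{B} : \text{for all } D \in \mathcal{B} \text{ there is } (\ob{y},s) \in D[(\ob{x},t)] \text{ with } f(\ob{y},s) \notin E[f(\ob{x},t)] \},$$ which is downward closed, so $h$ is a definable function from $B$ into a definable sort $A$. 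The point is that $f$ is continuous at $(\ob{x},t)$ precisely when $h(\ob{x},t) = \emptyset$. Applying part (I) of Lemma~\ref{qcont} to $h$ on any subball gives some $E \in \mathcal{B}$ and an open set on which $E \in h$ — UNLESS $h$ is empty on a subball, which is what we want. So I would argue: if $f$ is nowhere continuous on any subball of $B$, then $h$ is everywhere nonempty, and I need to derive a contradiction.

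The contradiction should come from combining the separate-continuity facts with a covering argument of the flavor used in Proposition~\ref{contvisc} and Proposition~\ref{open}. Concretely: given the open set $U$ on which $t \mapsto f(\ob{x},t)$ is continuous and the open set on which $\ob{x} \mapsto f(\ob{x},t)$ is continuous (for suitably chosen parameters, intersected down to a common open ball $B'$), fix $E \in \mathcal{B}$ and choose $E_0$ with $E_0^3 \subseteq E$. Given any $(\ob{x}_0, t_0) \in B'$, continuity in the last variable at $(\ob{x}_0,t_0)$ gives a subball in the $t$-direction mapping into $E_0[f(\ob{x}_0,t_0)]$; continuity in the first $m$ variables (at points $(\ob{x}, t)$ for $t$ ranging in that subball) together with Lemma~\ref{qcont}(II) and Proposition~\ref{open} lets me find a genuine $(m+1)$-dimensional subball around $(\ob{x}_0,t_0)$ mapping into $E[f(\ob{x}_0,t_0)]$, which says $E \notin h(\ob{x}_0,t_0)$. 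Since $E$ and $B'$ were arbitrary, this contradicts Lemma~\ref{boundf} applied to $h$ restricted to a one-dimensional ball inside $B'$.

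The main obstacle I anticipate is exactly this last gluing step: passing from "for each fixed last coordinate $t$ the function is continuous in $\ob{x}$, and for each fixed $\ob{x}$ it is continuous in $t$" to a two-sided estimate on a full-dimensional ball. The subtlety is that the open set of $\ob{x}$ witnessing continuity in $\ob{x}$ may depend on $t$, so one cannot naively intersect over all $t \in B_{m+1}$; the resolution is to work with the definable-sort-valued function $h$ throughout (so that viscerality and Lemma~\ref{qcont}(I) can be applied to extract uniform witnesses over an open set of parameters), and to invoke Proposition~\ref{open} to ensure that the "bad" directions cannot conspire to fill up a ball. Handling the bookkeeping of which coordinates are being varied — and making sure the saturation/cofinality argument that fixes the "tail" coordinates $a^{l}_{j^*_l}$ is compatible with the later step of growing the ball back up via $(II)$ — is the part that requires the most care, but it closely parallels the structure of the proof of Lemma~\ref{qcont}.
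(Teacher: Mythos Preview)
Your overall strategy is the paper's: induct on $n$, encode the failure of continuity as a definable-sort-valued function, apply Lemma~\ref{qcont}(I) to get a uniform bad $E$ on an open set, and then derive a contradiction from separate continuity in the last variable (Proposition~\ref{contvisc}) and in the first $m$ variables (inductive hypothesis). So the skeleton is right.

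Where your sketch diverges from the paper is the gluing step, and as written it has a gap. You propose: at $(\ob{x}_0,t_0)$, use continuity in $t$ to get a $t$-subball, then use continuity in $\ob{x}$ \emph{for each $t$ in that subball} to thicken it to a full-dimensional ball. But the inductive hypothesis only gives you continuity of $\ob{x}\mapsto f(\ob{x},t)$ on \emph{some} open set depending on $t$, with no reason for $\ob{x}_0$ to lie in it; your suggested fix via Lemma~\ref{qcont}(II) and Proposition~\ref{open} does not obviously close this, since you need the interior point to be the specific $\ob{x}_0$, not just any interior.

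The paper avoids this entirely by a simplification you should adopt: after shrinking, fix a \emph{single} $b\in B_{m+1}$ lying in the interior of the continuity set of $f(\ob{a},-)$ for every $\ob{a}$. Now introduce a \emph{second} definable-sort function, $h(\ob{a})=\{D\in\mathcal{B}: \forall y\in D[b]\ f(\ob{a},y)\in E'[f(\ob{a},b)]\}$ with $E'\circ E'\subseteq E$, and apply Lemma~\ref{qcont}(I) to this $m$-variable function to extract one $D$ that works uniformly for all $\ob{a}$ in an open set. Then the inductive hypothesis is applied \emph{once}, to the single function $\ob{x}\mapsto f(\ob{x},b)$. On the resulting open set, for $(\ob{x},y)\in V\times D[b]$ one has $(f(\ob{x},y),f(\ob{x},b))\in E'$ and $(f(\ob{x},b),f(\ob{a},b))\in E'$, hence $(f(\ob{x},y),f(\ob{a},b))\in E$, contradicting $E\in g(\ob{a},b)$. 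No $E_0^3$, no Lemma~\ref{qcont}(II), and no need to control continuity in $\ob{x}$ for a range of $t$'s.
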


\begin{proof}  We prove the theorem by induction on $n$.  If $n=1$ the result follows from Proposition \ref{contvisc}.  Thus suppose we have the result for $m$ and we establish it for $n=m+1$.  Let $B=B_1 \times \dots \times B_{m+1}$.

Suppose the result fails.  By Proposition \ref{open} the set of all points at which $f$ is discontinuous must have interior and hence without loss of generality we assume that $f$ is discontinuous on all of $B$.  If $\ob{a} =  (a_1, \ldots, a_{m+1}) \in B$ and $D \in \mathcal{B}$, then we let $D[\ob{a}]$ denote the open box $D[a_1] \times \ldots \times D[a_{m+1}]$, and let \[g(\ob{a})= \{E \in \mathcal{B} : \text{ for all } D \in \mathcal{B} \text{ there is } \ob{b} \in D[\ob{a}] \text{ with } f(\ob{b}) \notin E[f(\ob{a})] \}.\]  Thus $g:B \to A$ (where $A$ is the appropriate sort determined by $g$), and by Lemma \ref{qcont} there is an open set $U \subseteq B$ and $E \in \mathcal{B}$ so that $E \in g(\ob{a})$ for all $\ob{a} \in U$. Once again without loss of generality we assume that $U$ is all of $B$.  By Proposition \ref{contvisc}, for each $\ob{a} \in B_1 \times \dots \times B_m$ the function $f(\ob{a},-)$ is continuous at all but finitely many points in $B_{m+1}$.  Arguing as in the proof of the previous Proposition, we may (after possibly shrinking $B$) find $b \in B_{m+1}$ so that $b$ lies in the interior of the continuity points of $f(\ob{a},-)$ for all $\ob{a} \in B_1 \times \dots \times B_m$. 

Pick $E' \in \mathcal{B}$ such that $E' \circ E' \subseteq E$, and for $\ob{a} \in B_1 \times \dots \times B_m$ let \[h(\ob{a})= \{D \in \mathcal{B} : 
\text{for all } y \in D[b], \,  f(\ob{a},y) \in E'[f(\ob{a},b)] \}.\]  Thus $h: B_1 \times \dots \times B_m \to C$ (where $C$ is the appropriate definable sort), and by Lemma \ref{qcont} there is $D \in \mathcal{B}$ and open $U \subseteq B_1 \times \dots \times B_m$ so that $D \in h(\ob{a})$ for all $\ob{a} \in U$.  Without loss of generality we assume that $U=B_1 \times \dots \times B_m$.  Also by induction we may find an open $U \subseteq B_1 \times \dots \times B_m$ so that $f(-,b)$ is continuous on $U$.  Without loss of generality we assume that $f(-,b)$ is continuous on all of $B_1 \times \dots \times B_m$.   Fix $(\ob{a},b) \in B$ and let $V \subseteq B_1 \times \dots \times B_m$ be an open neighborhood of $\ob{a}$ so that $f(\ob{x},b) \in E'[f(\ob{a},b)] $ for all $\ob{x} \in V$, which exists by continuity.  Now pick $(\ob{x},y) \in V \times D[b]$.  On the one hand, since $D \in h(\ob{x})$ and $y \in D[b]$, we have $$(f(\ob{x}, y), f(\ob{x},b)) \in E',$$ while on the other hand $$(f(\ob{x}, b), f(\ob{a},b)) \in E'$$ by the choice of $V$, and so since $E' \circ E' \subseteq E$ we conclude that $$(f(\ob{x},y), f(\ob{a},b)) \in E.$$ But the fact that this holds for any $(\ob{x},y)$ in a neighborhood around $(\ob{a},b)$ contradicts the fact that $E \in g(\ob{a},b)$.
\end{proof}

Our next goal is a general theorem showing that in a visceral theory definable sets may be partitioned into ``cells.''  Naturally our notion of cell will be quite weak.  In particular we must allow essentially arbitrary open sets as cells in that the assumption of viscerality places few restrictions on the definable open sets; this will also be made apparent in the examples constructed in the following section. We follow Mathews \cite{Mathews} in our definition of cell:

\begin{definition}  A definable set $X \subseteq M^n$ is a {\em cell} if for some coordinate projection $\pi: M^n \to M^m$ the set $\pi[X]$ is open and $\pi$ is a homeomorphism from $X$ to $\pi[X]$. In this case we also say that $X$ is \emph{an $m$-cell} (without claiming that this $m$ is necessarily unique).

We include the trivial case when $\pi: M^n \to M^n$ is the identity map, so any open definable $X \subseteq M^n$ is an $n$-cell.

\end{definition}

By convention we assume that  that $M^0$ is the one-point topological space, thus for any $\ob{a} \in M^n$ the singleton $\{\ob{a}\}$ counts as a cell. In the case where $n=1$, a definable set $X \subseteq M$ is a cell if either $X$ is open or $X$ is a single point.

First we assemble some basic observations about cells:

\begin{lem}\label{cellfacts}  
\begin{enumerate}
\item If $X \subseteq M^n$ is a cell and $f:X \to M$ is a continuous definable function then for any $1 \leq i \leq n$ the set $\Gamma(f,X,i):=$ 
\[\{(x_1, \dots, x_{i-1},y,x_i,\dots,x_n: (x_1, \dots, x_n) \in X \text{ and } f(x_1, \dots, x_n)=y\}\] is a cell.
\item If $X$ is a cell then $X$ has non-empty interior if and only if $X$ is open.
\end{enumerate}
\end{lem}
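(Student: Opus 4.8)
The plan is to handle the two parts separately, in each case splitting along the dichotomy in the definition of ``cell.'' For part (1), the key point is that the coordinate projection $q\colon M^{n+1}\to M^n$ forgetting the $i$-th coordinate restricts to a homeomorphism from $\Gamma(f,X,i)$ onto $X$: indeed $q|_{\Gamma(f,X,i)}$ is a bijection onto $X$ with inverse $\bar x=(x_1,\dots,x_n)\mapsto(x_1,\dots,x_{i-1},f(\bar x),x_i,\dots,x_n)$, the map $q$ is continuous being (the restriction of) a projection, and its inverse is continuous since each of its component functions --- the coordinate projections of $M^n$ together with $f$ --- is continuous. If now $X$ is open in $M^n$, then $q$ exhibits $\Gamma(f,X,i)$ as homeomorphic, via a coordinate projection, to the open set $X$, so $\Gamma(f,X,i)$ is a cell. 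Otherwise there is a coordinate projection $\pi\colon M^n\to M^m$ with $\pi[X]$ open and $\pi|_X$ a homeomorphism onto $\pi[X]$; then $\pi\circ q\colon M^{n+1}\to M^m$ is again a coordinate projection (it selects the coordinates of $M^{n+1}$ singled out by $\pi$, re-indexed around the omitted $i$-th slot, and never the $i$-th itself), and $(\pi\circ q)|_{\Gamma(f,X,i)}$ is a composite of homeomorphisms onto the open set $\pi[X]$. Either way $\Gamma(f,X,i)$ is a cell; the degenerate case $m=0$, i.e.\ $X$ a singleton, falls under the second alternative.

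For part (2), if $X$ is open it equals its own interior, which settles one direction (for non-empty $X$). For the converse, assume $X$ is a cell with non-empty interior but $X$ is not open. Then $X$ is a cell of the second kind, so fix a coordinate projection $\pi\colon M^n\to M^m$ with $\pi[X]$ open and $\pi|_X$ a homeomorphism onto $\pi[X]$. If $m=n$ then $\pi$ is a homeomorphism of $M^n$ onto itself, whence $X=\pi^{-1}[\pi[X]]$ would be open; so $m<n$ and $\pi$ omits some coordinate, say the $k$-th. Since $X$ has non-empty interior it contains a non-empty open box $B=B_1\times\cdots\times B_n$; as $B_k$ is a ball it is infinite, so choose distinct $c,c'\in B_k$ and $\bar b\in B$ with $k$-th coordinate $c$, and let $\bar b'\in B$ be obtained from $\bar b$ by replacing that coordinate with $c'$. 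Then $\bar b,\bar b'\in B\subseteq X$, $\bar b\neq\bar b'$, and $\pi(\bar b)=\pi(\bar b')$, contradicting injectivity of $\pi|_X$.

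Neither part presents a real obstacle. The only substantive ingredient is the observation in part (2) that a coordinate projection omitting a coordinate cannot be injective on a set containing a box --- which is precisely where the viscerality hypothesis (all balls infinite) enters; the rest is routine, the main care being the bookkeeping with coordinate indices in part (1).
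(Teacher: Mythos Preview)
Your proof is correct. The paper states this lemma as a ``basic observation about cells'' without proof, and your argument supplies exactly the routine verification one would expect: in part (1) you correctly identify that the projection forgetting the inserted coordinate is a homeomorphism onto $X$ (with continuous inverse because $f$ is continuous), and then compose with the witnessing projection for $X$ if $X$ is not already open; in part (2) you correctly observe that a coordinate projection $M^n\to M^m$ with $m<n$ cannot be injective on any set containing a box, since balls are infinite.
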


We recall a definition from \cite{Cub}:

\begin{definition}
\label{fsf} The theory $T$ has \emph{definable finite choice} (or DFC) if for every $\omega$-saturated model $\mathfrak{M}$ and every definable function $f: X \rightarrow M^n$ with domain $X \subseteq M^m$ such that for all $y \in f[X]$, $f^{-1}(y)$ is finite, there is a definable function $\sigma: f[X] \rightarrow X$ such that for every $x \in X$, $$(\sigma(f(x)), f(x)) \in \textup{Graph}(f).$$

\end{definition}

Note that any totally ordered structure has definable finite choice. It is also true, though less obvious, that the complete theory of the $p$-adic field $\Q_p$ has definable finite choice; see, for example, \cite{Cub}.

Now we prove our cell decomposition theorem:

\begin{thm} \label{celldecomp} Suppose that $T$ is visceral and has definable finite choice, $\mathfrak{M} \models T$ is $\omega$-saturated, $A \subseteq M$, and $n \in \N \setminus \{0\}$. Then:

$(I)_n$ For any $A$-definable $X \subseteq M^n$, there is a partition of $X$ into finitely many $A$-definable cells.

$(II)_n$ If $X \subseteq M^n$ and $f: X \to M$ is $A$-definable, then there is a partition of $X$ into $A$-definable cells $C_1 \dots C_m$ so that $f$ is continuous when restricted to $C_i$ for each $1 \leq i \leq m$.

\end{thm}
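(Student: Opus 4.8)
My plan is to prove $(I)_n$ and $(II)_n$ by a simultaneous induction on $n$, in the pattern: establish $(I)_1$ and $(II)_1$ directly; show that $(I)_k$ and $(II)_k$ for all $k\le n$ imply $(I)_{n+1}$; and finally that $(I)_{n+1}$ together with $(I)_k,(II)_k$ for $k\le n$ imply $(II)_{n+1}$. The base case $n=1$ is immediate: for $(I)_1$, $\inte(X)$ is an open cell and by Lemma~\ref{finite_frontier} only finitely many points of $X$ lie outside it, each a singleton cell; for $(II)_1$, extend $f$ to all of $M$ by a constant, apply Proposition~\ref{contvisc} to obtain a finite set of discontinuities, and combine this with $(I)_1$.

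The core is $(I)_{n+1}$. Let $X\subseteq M^{n+1}$ be $A$-definable and let $\pi\colon M^{n+1}\to M^n$ forget the last coordinate. Since $\inte(X)$ is a cell, it suffices to decompose $R:=X\setminus\inte(X)$, which has empty interior. The one genuinely new ingredient is a \textbf{fibration lemma}: if $Y\subseteq M^{n+1}$ is definable, $\pi[Y]$ has non-empty interior, and $Y_{\ob a}$ is infinite for every $\ob a\in\pi[Y]$, then $Y$ has non-empty interior. Granting this, its contrapositive applied to restrictions of $R$ over small balls shows that $P:=\{\ob a : R_{\ob a}\ \text{infinite}\}$ has empty interior in $M^n$. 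Now split $R=(R\setminus\pi^{-1}(P))\cup(R\cap\pi^{-1}(P))$. The first piece has finite fibres, uniformly bounded by uniform finiteness, so repeated applications of Definable Finite Choice write it as a finite disjoint union of graphs of $A$-definable partial functions from subsets of $M^n$ to $M$; applying $(II)_n$ to each of these and then Lemma~\ref{cellfacts}(1) (placing the value as the last coordinate) turns the graphs into finitely many cells of $M^{n+1}$. For the second piece, $\pi[R\cap\pi^{-1}(P)]\subseteq P$ has empty interior, so by $(I)_n$ it is a finite union of cells, none of which can be open (an open cell has non-empty interior by Lemma~\ref{cellfacts}(2)); over each such cell $D$ a coordinate projection $M^{n+1}\to M^{m+1}$ with $m<n$ carries $R\cap\pi^{-1}(D)$ homeomorphically onto a definable subset of $M^{m+1}$, which $(I)_{m+1}$ decomposes into cells, and these pull back to cells of $M^{n+1}$.

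For the fibration lemma, after decomposing $\pi[Y]$ by $(I)_n$ and passing to a ball $B$ inside one of its open cells, I may assume $Y_{\ob a}$ is open for all $\ob a\in B$ (replace $Y$ by $\{(\ob a,b):b\in\inte(Y_{\ob a})\}$, which is contained in $Y$ and still has infinite fibres). Let $g\colon B\to A$ be the definable map into the appropriate definable sort with $g(\ob a)=\{E\in\mathcal B : E[b]\subseteq Y_{\ob a}\ \text{for some}\ b\}$, which is non-empty and downward closed; Lemma~\ref{qcont}(I) then yields $E^{*}\in\mathcal B$ and an open $U\subseteq B$ with $E^{*}\in g(\ob a)$ for all $\ob a\in U$. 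Consider $W:=\{(\ob a,b):\ob a\in U,\ E^{*}[b]\subseteq Y_{\ob a}\}\subseteq Y$, which has non-empty fibres over $U$. The aim is to arrange that $W$ has \emph{finite} fibres: if $\{\ob a : W_{\ob a}\ \text{infinite}\}$ has empty interior we replace $U$ by its complement in $U$, still with non-empty interior by Proposition~\ref{open}; otherwise $W$ restricted over a ball inside that locus is again a set of the same shape but with the strictly larger uniformity constant obtained by composing $E^{*}$ with the new one, and iterating drives us into the finite-fibre case (alternatively a single compactness argument suffices). Once $W$ has finite fibres, Definable Finite Choice gives an $A$-definable $c\colon U'\to M$ with $U'$ a ball and $E^{*}[c(\ob a)]\subseteq Y_{\ob a}$ throughout, and by Theorem~\ref{gencont} we may shrink $U'$ so that $c$ is continuous. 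Picking $E_0\in\mathcal B$ with $E_0\circ E_0\subseteq E^{*}$ and a point $\ob a_0\in U'$, continuity gives a neighbourhood $V\subseteq U'$ of $\ob a_0$ on which $c(\ob a)\in E_0[c(\ob a_0)]$; since $E_0$ is symmetric and $E_0\circ E_0\subseteq E^{*}$, this yields $E_0[c(\ob a_0)]\subseteq E^{*}[c(\ob a)]\subseteq Y_{\ob a}$ for every $\ob a\in V$, so the open box $V\times E_0[c(\ob a_0)]$ is contained in $Y$.

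Finally $(II)_{n+1}$. Given $f\colon X\to M$ with $X\subseteq M^{n+1}$, apply $(I)_{n+1}$ to partition $X$ into cells and treat each cell $C$ separately. If $C$ is not open, a coordinate projection exhibits $C$ as homeomorphic to an open subset of some $M^m$ with $m\le n$; apply $(II)_m$ to the transported function and pull the cells back. If $C$ is open, let $S\subseteq C$ be the discontinuity locus of $f|_C$; then $S$ has empty interior, since an open box inside $S$ would contradict Theorem~\ref{gencont} applied to the restriction of $f$ to that box. By $(I)_{n+1}$, partition $C\setminus S$ and $S$ into cells: on each cell inside $C\setminus S$ the function $f$ is continuous (continuity at a point relative to the open set $C$ passes to subspaces), while each cell inside $S$ is non-open, hence lower-dimensional, and is handled by the previous case. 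The main obstacle throughout is the fibration lemma — specifically the passage from a uniform \emph{size} of ball inside almost every fibre (supplied by Lemma~\ref{qcont}(I)) to a uniform \emph{location}; the device is to use Definable Finite Choice to make the choice of centre definable and then exploit its continuity, and the delicate point is carrying out the reduction to finite fibres cleanly.
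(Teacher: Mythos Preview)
Your overall scheme is sound and the base case and $(II)_{n+1}$ step match the paper, but your route through $(I)_{n+1}$ is genuinely different: you aim to show that the locus $P=\{\ob a:R_{\ob a}\text{ infinite}\}$ has empty interior via a ``fibration lemma'', which would make the open-fibre case vacuous. The paper never proves such a lemma. Instead, after reducing to $R$ with empty interior, $\pi[R]$ open, and every fibre $R_{\ob a}$ open, it slices \emph{horizontally}: for each $b$ in the last-coordinate projection, the slice $R^b=\{\ob a:(\ob a,b)\in R\}\subseteq M^n$ has empty interior by Lemma~\ref{qcont}(II), so by $(I)_n$ it decomposes into non-open cells; compactness makes this uniform in $b$, reducing $R$ to a finite union of graphs of functions in at most $n$ variables, which are handled by $(II)_{\le n}$.

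The gap in your argument is the reduction to finite fibres inside the fibration lemma. You correctly obtain $E^*$ and $U$ with $W_{\ob a}=\{b:E^*[b]\subseteq Y_{\ob a}\}$ non-empty for $\ob a\in U$, but then assert that if the infinite-fibre locus of $W$ has interior one iterates with a ``strictly larger uniformity constant'' and eventually lands in the finite-fibre case. This is not justified: each pass replaces $E^*$ by something containing $E^*\circ E^{**}$, but there is no descending-chain condition on $\mathcal B$ in the growing direction, and $W^E_{\ob a}=\{b:E[b]\subseteq Y_{\ob a}\}$ can remain infinite for arbitrarily large $E$ (take $Y_{\ob a}$ a half-line in an ordered group). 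The promised ``single compactness argument'' would have to run along the filter $\mathcal B$, but shrinking $E$ \emph{enlarges} $W^E$, which is the wrong direction for what you need. The fibration lemma is plausible---indeed it follows once cell decomposition is in hand---but an independent proof is not visible here, and the paper's horizontal-slice manoeuvre sidesteps the issue entirely.
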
 

\begin{proof}   

We will prove $(I)_n$ and $(II)_n$ by induction, showing that:

\begin{enumerate}
\item If $(I)_k$ and $(II)_k$ hold for all $k \in \{1, \ldots, n\}$, then $(I)_{n+1}$ holds; and

\item If $(I)_{n+1}$ and $(II)_n$ hold, then $(II)_{n+1}$ holds.
\end{enumerate}

For the base cases, notice that $(I)_1$ is trivial by viscerality and $(II)_1$ follows from Proposition \ref{contvisc}.

Now we assume that $(I)_k$ and $(II)_k$ hold for all $k \leq n$ and we prove $(I)_{n+1}$.  Suppose $X \subseteq M^{n+1}$ is $A$-definable.  As $\inte(X)$ is $A$-definable and a cell we may without loss of generality assume that $\inte(X)=\emptyset$.  Let $\pi: M^{n+1} \to M^n$ be projection onto the first $n$ coordinates.  By induction we may without loss of generality assume that $\pi[X]$ is a cell.  Suppose that $Y:=\pi[X]$ does not have interior.  Thus there is a coordinate projection $\pi_0: M^n \to M^m$ so that $\widetilde{\pi} := \pi_0 \upharpoonright Y$ is a homeomorphism of $Y$ onto $\pi_0[Y]$, which is open.  For convenience let us assume that $\pi_0$ is projection onto the first $m$ coordinates.  Now consider the set $Z=\{(x_1, \dots, x_m,y) : ( \widetilde{\pi}^{-1}(x_1, \dots, x_m),y) \in X\}$.  By induction we may partition $Z$ into $A$-definable cells $D_1, \dots, D_k$.  Setting \[D^*_i=\{( \widetilde{\pi}^{-1}(x_1, \dots, x_m),y): (x_1, \dots, x_m,y) \in D_i\},\] we easily check that the $D^*_i$ are $A$-definable cells partitioning $X$.  Hence we may assume that $Y$ is open.

For each $\ob{a} \in Y$ let $X_{\ob{a}}$ be the fiber of $X$ over $\ob{a}$.   By Lemma~\ref{finite_frontier}, there is an $N \in \omega$ so that $X_{\ob{a}}$ has at most $N$ non-interior points for all $\ob{a} \in Y$.   Since $T$ has definable finite choice, without loss of generality we reduce to the case that either $X_{\ob{a}}$ is a singleton for all $\ob{a} \in Y$ or that $X_{\ob{a}}$ is open for all $\ob{a}\in Y$.  First suppose that we are in the former case.  Thus $X$ is the graph of an $A$-definable function $f$ with domain $Y$, and by $(II)_n$ we may repartition $Y$ to reduce to the case that $f$ is continuous on $Y$, but then by Lemma \ref{cellfacts} $X$ is a cell.  Thus we may assume that for all $\ob{a} \in Y$ the fibre $X_{\ob{a}}$ is open.

Let $\pi_{n+1}: M^{n+1} \to M$ be the projection onto the last coordinate and let $W=\pi_{n+1}[X]$.  As $X$ has empty interior, by Lemma \ref{qcont} for  no $b \in W$ can $X_b$ have interior.  By induction $X_b$ has a partition into $Ab$-definable cells, none of which are open.  By compactness there are \[\psi_1(x_1, \dots, x_n,y), \dots, \psi_s(x_1, \dots, x_n,y)\] finitely many formulae with parameters from $A$ so that  for each $(\ob{a}, b) \in X$, for some $1 \leq i \leq s$ the set $\psi_i(x_1, \dots, x_n,b)$ defines a cell with empty interior and $\psi_i(\ob{a},b)$ holds.  Hence after partitioning $X$ we are reduced to considering the case where $X$ is a set defined by a single formula  $\psi(x_1, \dots x_n,y)$ as above. (It may not necessarily be the case that for this new $X$, the fibers $X_{\ob{a}}$ are open for every $\ob{a} \in Y$ as before, but this will not matter for the argument that follows.)  Furthermore after potentially partitioning $X$ again we may assume that there is a projection $\pi_l:\psi(M^n,b) \to M^l$ which is a homeomorphism onto its image for all $b \in W$.  For convenience assume that $\pi_l$ is projection onto the first $l$ coordinates.  Let $g(x_1, \dots, x_l,y)$ be the function producing the unique witness to $\exists x_{l+1}, \dots x_n\psi(a_1, \dots a_l, x_{l+1}, \dots, x_n,b)$ for each $b \in W$ and $a_1, \dots, a_l \in \pi_l[\psi(M^n,b)]$.  Thus the set $X$ is exactly the graph of the function $g$.  By induction we may partition the domain of $g$ into cells so that $g$ is continuous on each cell.  But then the graphs of $g$ restricted to each of these cells is a partition of $X$ into $A$-definable cells.  This establishes $(I)_{n+1}$.

Lastly we show that $(I)_{n+1}$ and $(II)_n$ imply $(II)_{n+1}$.  Let $X \subseteq M^{n+1}$ be $A$-definable and $g: X \to M$ be an $A$-definable function.  By $(I)_{n+1}$ we may assume that $X$ is a cell.  First suppose that $X$ has no interior.  Then for some projection function $\pi:M^{n+1} \to M^m$ we have that $\pi$ is a homeomorphism between $X$ and $\pi[X]$.  For convenience assume that $\pi$ is projection onto the first $m$ coordinates.  Thus we may consider $g \circ \pi^{-1} : \pi[X] \to M$ as an $A$-definable function in the obvious way.  By $(II)_n$ we may partition $\pi[X]$ into $A$-definable cells $D_1, \dots, D_k$ so that $g \circ \pi^{-1}$ is continuous on each $D_i$.  Finally let \[ D_i^* = \{(x_1, \dots, x_{n+1}) \in X : (x_1, \dots, x_m) \in D_i \}.\]  Thus $g$ is continuous on each $D_i^*$ and the $D_i^*$ partition $X$.  Finally, if necessary, apply $(I)_{n+1}$ to partition each of the $D_i^*$ into cells.  Thus we may assume that $X$ has interior.  In this case first consider $X_1$ the set of all points in $X$ at which $g$ is continuous.  Clearly any partition of $X_1$ into cells will suffice for $(II)_{n+1}$.  Hence we only need to consider $g$ restricted to $X \setminus X_1$ but by Theorem \ref{gencont} $X\setminus X_1$ has empty interior and we are done. 
\end{proof}

In the general case of a visceral theory which does not necessarily have definable finite choice, one might hope for an even more general form of cell decomposition of sets into the graphs of definable continuous ``finite-to-one correspondences.'' Simon and Walsberg \cite{Simon_Walsberg} achieved this for dp-minimal visceral theories. We did not pursue this in the present article since our original motivating examples all have DFC.

\subsection{Topological dimension}

Next we consider a natural topological dimension function for visceral theories. This definition is not new; it was called  ``topological dimension'' by Mathews \cite{Mathews} and ``na\"{i}ve topological dimension'' by Simon and Walsberg \cite{Simon_Walsberg}.

\begin{definition}  For any structure $\mathfrak{M}$ and for any $X \subseteq M^n$, the {\em dimension} of $X$, written $\dim(X)$, is the largest $m \leq n$ so that $\pi[X]$ has non-empty interior for some coordinate projection $\pi: M^n \to M^m$.
\end{definition}

We establish a basic fact about dimension in the visceral context.

\begin{prop}\label{union}  If $X \subseteq M^n$ is definable and $X=X_1 \cup \dots \cup X_r$ for definable sets $X_i$ then $\dim(X)=\max\{\dim(X_i): 1 \leq i \leq r\}$.
\end{prop}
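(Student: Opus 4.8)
The plan is to reduce the statement to the case $r=2$ (then induct on $r$), so it suffices to show $\dim(X_1 \cup X_2) = \max\{\dim(X_1), \dim(X_2)\}$. One inequality is immediate: if $\pi[X_i]$ has nonempty interior for some coordinate projection $\pi \colon M^n \to M^m$, then $\pi[X_1 \cup X_2] \supseteq \pi[X_i]$ also has nonempty interior, so $\dim(X_1 \cup X_2) \geq \dim(X_i)$ for each $i$, giving $\dim(X) \geq \max\{\dim(X_i)\}$.

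For the reverse inequality, let $m = \dim(X)$ and fix a coordinate projection $\pi \colon M^n \to M^m$ with $\pi[X]$ having nonempty interior. Since $\pi[X] = \pi[X_1] \cup \pi[X_2]$ and $\pi[X]$ has nonempty interior, shrinking to an open ball $B \subseteq \pi[X]$ and applying Proposition~\ref{open} to the decomposition $B = (B \cap \pi[X_1]) \cup (B \cap \pi[X_2])$, one of $B \cap \pi[X_i]$ has nonempty interior; hence $\pi[X_i]$ has nonempty interior for some $i$, so $\dim(X_i) \geq m = \dim(X)$. Combined with the first paragraph, $\dim(X) = \max\{\dim(X_i)\}$ when $r = 2$, and the general case follows by an easy induction on $r$, writing $X_1 \cup \dots \cup X_r = (X_1 \cup \dots \cup X_{r-1}) \cup X_r$ and using $\max\{\dim(X_1), \dots, \dim(X_{r-1})\} = \dim(X_1 \cup \dots \cup X_{r-1})$ from the inductive hypothesis.

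The only subtle point — and the place where viscerality is genuinely used — is the appeal to Proposition~\ref{open}: a priori $\pi[X]$ having nonempty interior does not obviously force one of the two pieces to have nonempty interior, since in a general topological space a set with interior can be split into two interior-free pieces. Proposition~\ref{open} is exactly what rules this out in the visceral setting, so the proof is essentially a one-line invocation of it once the projection bookkeeping is set up. No further obstacle is expected; the argument is short.
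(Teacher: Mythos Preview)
Your proof is correct and follows essentially the same approach as the paper's: reduce to $r=2$, note the easy inequality $\dim(X) \geq \max\{\dim(X_i)\}$, then use Proposition~\ref{open} on the projection to get the reverse inequality. The only difference is cosmetic---the paper applies Proposition~\ref{open} directly to $\pi[X] = \pi[X_1] \cup \pi[X_2]$ rather than first shrinking to a ball, since the hypothesis of that proposition only requires nonempty interior, not openness.
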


\begin{proof}  It suffices to consider the case where $X=X_1 \cup X_2$.  Clearly $\dim(X) \geq \max\{\dim(X_1),\dim(X_2)\}$.  Now suppose that $\pi : M^n \to M^m$ is a coordinate projection so that $\pi[X]$ has non-empty interior.  As $\pi[X]=\pi[X_1] \cup \pi[X_2]$, Proposition \ref{open} implies that one of $\pi[X_1]$ or $\pi[X_2]$ has non-empty interior.  Hence $\dim(X) \leq \max\{\dim(X_1), \dim(X_2)\}$.

\end{proof}

Next we work towards establishing that, under an additional topological hypothesis, $\dim$ is invariant under definable bijections.  We need a basic lemma.

\begin{lem}\label{invariance} Suppose $T$ has definable finite choice and let $\mathfrak{M}\models T$.  Suppose that $X \subseteq M^m$ is definable and $Y \subseteq M^n$ is definable with non-empty interior and $m<n$.  Then there is no definable bijection $f: X \to Y$.  
\end{lem}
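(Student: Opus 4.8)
The plan is to argue by contradiction: suppose $f\colon X\to Y$ is a definable bijection with $X\subseteq M^m$, $Y\subseteq M^n$ having nonempty interior, and $m<n$. The idea is to use the cell decomposition machinery (Theorem~\ref{celldecomp}, available since $T$ has definable finite choice) together with the dimension additivity (Proposition~\ref{union}) to derive a contradiction by a dimension count. First I would apply $(I)_m$ to partition $X$ into finitely many $A$-definable cells, where $A$ is a parameter set over which everything is defined. By Proposition~\ref{union}, the images of these cells under $f$ cover $Y$, so by Proposition~\ref{open} at least one such image has nonempty interior in $M^n$; hence, replacing $X$ by one cell, we may assume $X$ is a cell. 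Since a cell $X\subseteq M^m$ is, via a coordinate projection, homeomorphic to an open subset of $M^l$ for some $l\le m<n$, and composing $f$ with the inverse of this homeomorphism produces a definable bijection from an open subset of $M^l$ onto $Y$, we reduce to the case where $X$ itself is an open ball $B_1\times\cdots\times B_l\subseteq M^l$ with $l<n$.

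Next I would analyze the bijection $g=f^{-1}\colon Y\to X$. Since $Y$ has nonempty interior, shrink to assume $Y$ is an open ball. Apply Theorem~\ref{celldecomp} (part $(II)$, coordinatewise, or just Theorem~\ref{gencont}) to the $l$ coordinate functions of $g$ to find a nonempty open $U\subseteq Y$ on which $g$ is continuous. Now $g\colon U\to X$ is a continuous definable injection from an open subset of $M^n$ into $M^l$ with $l<n$. The key point is to show this is impossible under viscerality. Here is where I would use an inductive argument peeling off one variable at a time: fix all but one coordinate in $U$ to get a continuous definable injection from a ball $B\subseteq M$ into $X\subseteq M^l$; by Proposition~\ref{contvisc} applied to each coordinate of this map and by viscerality, after shrinking we get that this restricted map is continuous and injective on a ball, hence its image is an infinite subset of $M^l$, and in fact by a fibering argument one shows $\dim(g[U])\ge n$. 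Since $\dim(g[U])\le\dim(X)\le l<n$ by Proposition~\ref{union} and the definition of dimension, this is the desired contradiction.

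More carefully, for the last step I expect the cleanest route is: show that a continuous definable injection $h\colon V\to M^l$ defined on an open $V\subseteq M^k$ has image of dimension $\ge k$, proved by induction on $k$. For $k=1$: $h$ is injective and continuous on a ball, so some coordinate $h_i$ is non-constant on every subball (else $h$ is constant, contradicting injectivity on an infinite set); since $h_i$ is definable its image is infinite, hence has interior in $M$ by viscerality, giving dimension $\ge 1$. For the inductive step, fix one variable generically, apply the inductive hypothesis to get a slice of dimension $\ge k-1$, then use the injectivity in the remaining variable together with Lemma~\ref{qcont}(II) to promote this to dimension $\ge k$ for the full image. Applying this with $k=n$, $V=U$ yields $\dim(g[U])\ge n>l\ge\dim(X)$, contradicting $g[U]\subseteq X$ and Proposition~\ref{union}.

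The main obstacle I anticipate is the final step — showing that a continuous definable injection cannot lower dimension — since this is exactly the kind of "invariance of domain" phenomenon that fails in pathological topologies and must be squeezed out of viscerality plus DFC. In particular the inductive promotion from a $(k-1)$-dimensional slice to a $k$-dimensional image needs the fiber-to-interior transfer of Lemma~\ref{qcont}(II), and one must be careful that after fixing a generic value of the extra variable the inductive hypothesis genuinely applies (the slice must still be open, which is where continuity and working inside an open ball is essential). Everything else — the reduction to $X$ a ball via cell decomposition, and the continuity of $g$ on an open set — is routine given the results already established.
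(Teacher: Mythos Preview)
Your reduction steps are fine and essentially match the paper: partition $X$ into cells so that one image under $f$ has interior, replace $X$ by that cell (hence by something homeomorphic to an open set in some $M^l$ with $l \leq m < n$), and then pass to an open $U \subseteq Y$ on which $g = f^{-1}$ is continuous. The paper phrases this as ``reduce to $f$ a homeomorphism,'' which amounts to the same thing.

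The genuine gap is in the inductive step of your claim that a continuous definable injection $h\colon V \to M^l$ from an open $V \subseteq M^k$ must have $\dim(h[V]) \geq k$. You propose to fix one coordinate of the \emph{domain} and then use Lemma~\ref{qcont}(II) to ``promote.'' Consider the critical case $l = k-1$. For each fixed last coordinate $c$, the inductive hypothesis gives that $h(V_c \times \{c\})$ has interior in $M^{k-1}$, and injectivity of $h$ makes these images pairwise disjoint as $c$ varies. But that is all you get: an infinite pairwise-disjoint family of open subsets of $M^{k-1}$ is still just a subset of $M^{k-1}$, so $\dim(h[V]) \leq k-1$, and there is nothing to stack into dimension $k$. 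Lemma~\ref{qcont}(II) does not help here: it requires a \emph{single} point $b$ lying in the interior of every fiber of a set in $M^{n+1}$, whereas by the very injectivity you are invoking the fibers $h(V_c \times \{c\})$ are disjoint and share no common point. So the induction does not close.

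The paper's proof fixes this by slicing the \emph{codomain} rather than the domain, inducting on $m$. After reducing to $f$ a homeomorphism, one takes an open box $B_1 \times \cdots \times B_n \subseteq Y$, fixes $a \in B_n$, and sets $Z = B_1 \times \cdots \times B_{n-1} \times \{a\}$. Since $f$ is a homeomorphism and $Z$ has empty interior in $M^n$, the preimage $f^{-1}[Z]$ has empty interior in $M^m$; cell-decomposing it, each cell projects homeomorphically onto an open set in some $M^{l'}$ with $l' \leq m-1$. By Proposition~\ref{open} one such cell maps under $\pi \circ f$ (with $\pi\colon M^n \to M^{n-1}$) onto a set with interior in $M^{n-1}$, and since $l' \leq m-1 < n-1$ the inductive hypothesis applies. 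Slicing the codomain is what makes the induction go through: it drops both $n$ and the effective $m$ by one, preserving the strict inequality; slicing the domain drops only $k$ and gives no control over the image.
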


\begin{proof}    Let $f: X \to Y$ be a counterexample to the Lemma.  First we show that we may assume that $f$ is a homeomorphism.  Take a partition of $X$ into cells $C_1, \dots, C_k$ so that $f$ is continuous when restricted to each $C_i$.  By Proposition \ref{open} at least one $f[C_i]$ has non-empty interior.  Thus without loss of generality $f$ is continuous.  Argue similarly to assume that $f^{-1}$ is continuous as well.

So it suffices to prove that the following statement holds for every $m$:

\medskip

\emph{If $X \subseteq M^m$ is definable, $n > m$, and $Y \subseteq M^n$ is definable with non-empty interior, then there is no definable homeomorphism $f : X \rightarrow Y$.}

\medskip

We prove this by induction on $m$. If $m=0$ then $X$ is finite and $Y$ is infinite so the result is trivial.  Hence assume we have the result for any definable $X \subseteq M^{l}$ for each $l \leq m$ and assume that we now have $X \subseteq M^{m+1}$ and $f: X \to Y$ a homeomorphism where $Y \subseteq M^n$ has non-empty interior and $n>m+1$.  Let $B=B_1 \times \dots \times B_n$ be a ball so that $B \subseteq Y$.  Let $a \in B_n$ and set $Z=B_1 \times \dots \times B_{n-1}\times \{a\}$.  First of all notice that $f^{-1}[Z]$ cannot have interior in $M^{m+1}$ by the continuity of $f^{-1}$.  Partition $f^{-1}[Z]$ into cells $C_1, \dots, C_r$ and let $\pi:M^n \to M^{n-1}$ be projection onto the first $n-1$ coordinates.  Thus $\pi \circ f$ maps $f^{-1}[Z]$ bijectively onto $B_1 \times \dots \times B_{n-1}$.  By Proposition~\ref{open} again, one of the sets $(\pi \circ f) [C_i]$ must have non-empty interior, say $i=1$.   As $C_1$ does not have interior there is a coordinate projection $\pi_0 : M^{m + 1} \rightarrow M^{l}$ (for some $l \leq m$) such that $\pi_0$ maps $C_1$ homeomorphically onto its image.  But then $\pi \circ f \circ \pi_0^{-1}$ yields a definable bijection from a subset of $M^{l}$ to a subset with non-empty interior in $M^{n-1}$, which is impossible by the induction hypothesis, since $n-1 > m \geq l$.

\end{proof}

From the previous Lemma, we immediately have:

\begin{cor}
($T$ visceral and has DFC) If $n \neq m$, then there is no definable bijection between $M^n$ and $M^m$.
\end{cor}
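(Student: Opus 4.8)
The plan is to derive this as an immediate consequence of Lemma~\ref{invariance}, so essentially no new work is required. First I would observe that the statement is symmetric in $n$ and $m$, so without loss of generality I may assume $m < n$. The goal is then to rule out a definable bijection in either direction between $M^m$ and $M^n$.

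Next, note that $Y := M^n$, viewed as a definable subset of $M^n$, trivially has non-empty interior (it is the whole space, which is open). Likewise $X := M^m$ is a definable subset of $M^m$ with $m < n$. If there were a definable bijection $f : M^m \to M^n$, this would directly contradict Lemma~\ref{invariance}. If instead there were a definable bijection $g : M^n \to M^m$, then $g^{-1} : M^m \to M^n$ is also definable (being the inverse of a definable function) and is a bijection, again contradicting Lemma~\ref{invariance}. Either way we reach a contradiction, so no definable bijection between $M^n$ and $M^m$ exists.

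There is no real obstacle here: all the substantive content — the reduction to a homeomorphism via cell decomposition (Theorem~\ref{celldecomp}) and Proposition~\ref{open}, and the dimension-lowering induction — is already packaged inside Lemma~\ref{invariance}. The only things to check are the two trivial points above: that $M^n$ is open in itself, and that the hypothesis "$m < n$" can be arranged by symmetry while handling bijections running in either direction.
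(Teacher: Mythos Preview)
Your proposal is correct and matches the paper's approach exactly: the paper simply states that the corollary follows immediately from Lemma~\ref{invariance}, and your argument is precisely the obvious unpacking of that claim.
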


Another consequence is:

\begin{cor} 
($T$ visceral and has DFC) If $X \subseteq M^n$ is definable and has non-empty interior and $f:X \to M^n$ is a definable injection then $f[X]$ has non-empty interior.
\end{cor}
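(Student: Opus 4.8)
The plan is to argue by contradiction and reduce to Lemma~\ref{invariance} via the cell decomposition theorem. So suppose $f : X \to M^n$ is a definable injection with $X$ of non-empty interior but $f[X]$ of empty interior. Since $T$ is visceral with DFC, Theorem~\ref{celldecomp} lets me partition $f[X]$ into finitely many definable cells $C_1, \dots, C_k$. Because each $C_i \subseteq f[X]$ and $f[X]$ has empty interior, each $C_i$ has empty interior, so by Lemma~\ref{cellfacts}(2) no $C_i$ is open; hence each $C_i$ is a cell of the second kind, witnessed by a coordinate projection $\pi_i : M^n \to M^{m_i}$ with $m_i < n$ (the inequality is strict precisely because $C_i$ is not open) such that $\pi_i$ restricts to a homeomorphism of $C_i$ onto the open set $\pi_i[C_i] \subseteq M^{m_i}$.

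Next I pull this partition back along $f$. Since $f : X \to f[X]$ is a bijection, $X = f^{-1}(C_1) \cup \dots \cup f^{-1}(C_k)$, and $X$ has non-empty interior, so by Proposition~\ref{open} (iterated) some $f^{-1}(C_j)$ has non-empty interior; fix such a $j$. Now consider the composite $g := \pi_j \circ (f \upharpoonright f^{-1}(C_j))$. As $f$ is injective and $C_j \subseteq f[X]$, the map $f \upharpoonright f^{-1}(C_j)$ is a bijection onto $C_j$, and $\pi_j \upharpoonright C_j$ is a bijection onto $\pi_j[C_j]$, so $g$ is a definable bijection from $f^{-1}(C_j) \subseteq M^n$ onto $\pi_j[C_j] \subseteq M^{m_j}$. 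Its inverse $g^{-1}$ is therefore a definable bijection from a subset of $M^{m_j}$ onto $f^{-1}(C_j) \subseteq M^n$, with $m_j < n$ and with the target $f^{-1}(C_j)$ having non-empty interior. This contradicts Lemma~\ref{invariance}, and the corollary follows.

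I do not expect a serious obstacle here; the one point needing care is the bookkeeping around which set plays the role of the "$Y$ with non-empty interior" in Lemma~\ref{invariance}. It must be the higher-dimensional preimage $f^{-1}(C_j)$, which is exactly why the appeal to Proposition~\ref{open} to locate a preimage with interior is needed, and why one applies Lemma~\ref{invariance} to $g^{-1}$ rather than to $g$.
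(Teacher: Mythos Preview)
Your proof is correct and follows essentially the same route as the paper: partition $f[X]$ into cells, use Proposition~\ref{open} to find a preimage $f^{-1}(C_j)$ with interior, and then compose with the coordinate projection witnessing that $C_j$ is a non-open cell to contradict Lemma~\ref{invariance}. The paper compresses the first two steps into the phrase ``argue as above to reduce to the case where $f[X]$ is a cell,'' but the substance is identical; your version simply spells out the details (including the observation $m_j < n$) that the paper leaves implicit.
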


\begin{proof}   Suppose the result fails witnessed by $X$ and $f$.  Argue as above to reduce to the case where $f[X]$ is a cell $C$. As $C$ has empty interior there is $\pi: M^n \to M^m$ a coordinate projection so that $\pi$ is a bijection of $C$ onto its image.  But then $f^{-1} \circ \pi^{-1}$ violates Lemma \ref{invariance}.
\end{proof}

Unfortunately under only the assumption of viscerality we have not been able to prove that dimension is preserved under bijections.  The difficulty lies in the fact that given a cell $C \subseteq M^n$ and $\pi: M^n \to M^l$ a projection so that $\pi$ maps $C$ homeomorphically to its image and so that $\pi[C]$ is open, we have not been able to show that $\dim(C)=l$ since a priori there may be another projection $\pi^{\prime}:M^n \to M^k$ with $k>l$ so that $\pi^{\prime}[C]$ has interior.  In order to achieve this we need an additional condition.

\begin{definition}  Given a structure $\mathfrak{M}$ a {\em space-filling function} is a function $f : X \to Y$ where $X \subseteq M^m$ and $Y \subseteq M^n$ so that $f$ is surjective, $Y$ has non-empty interior, and $m<n$.  We say that a theory $T$ has no space-filling functions if in no model of $T$ is there a definable space filling function.
\end{definition}

\begin{definition}
\label{EP} $T$ has the \emph{exchange property} if for all $a_1, \ldots, a_{n+1}, b \in M$, if $b \in \acl(a_1, \ldots, a_{n+1}) \setminus \acl(a_1, \ldots, a_n)$, then $a_{n+1} \in \acl(a_1, \ldots, a_n, b)$.
\end{definition}

Recall that all $o$-minimal theories have the exchange property, but not all dp-minimal ordered groups have this property: for example, complete theories of divisible Abelian ordered groups with contraction maps are weakly $o$-minimal (see \cite{Kuhl}), hence dp-minimal.

Our motivating examples have no space-filling functions:

\begin{prop} \label{no_space} Suppose that  $T$ visceral and either is dp-minimal or has the exchange property. Then $T$ has no space-filling functions.
\end{prop}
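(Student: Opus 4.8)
The plan is to isolate an abstract dimension function $\delta$ on definable sets with three properties: \textbf{(a)} $\delta(M^k)=k$ for all $k$; \textbf{(b)} $\delta(Y)\le\delta(X)$ whenever there is a surjective definable map $X\to Y$; and \textbf{(c)} $\delta(Y)=n$ for every definable $Y\subseteq M^n$ with non-empty interior. Given such a $\delta$, there is no definable space-filling function $f\colon X\to Y$ with $X\subseteq M^m$, $Y\subseteq M^n$ open and $m<n$, because it would yield $n=\delta(Y)\le\delta(X)\le\delta(M^m)=m<n$. First I would reduce to working inside our fixed saturated model $\mathfrak{M}$: for a fixed formula $\chi(\bar x,\bar y)$ with $\bar x$ of length $m$ and $\bar y$ of length $n$, the assertion "there are parameters making $\chi$ the graph of a surjection onto a set $Y$ with non-empty interior" is elementary — surjectivity is a $\forall\exists$ statement, and "$Y$ has non-empty interior" asserts the existence of a point together with parameters $\bar z$ satisfying $\psi$ whose corresponding open box lies inside $Y$ — so, $T$ being complete, a definable space-filling function in any model of $T$ produces one in $\mathfrak{M}$.

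For the exchange-property case I would let $\delta$ be $\acl$-dimension. Fix a small $A$ over which $X$, $Y$ and $f$ are defined and put $\delta(Y)=\sup\{\dim(\bar b/A):\bar b\in Y\}$, where $\dim(\bar b/A)$ is the size of a maximal $\acl$-independent subtuple of $\bar b$ over $A$; the exchange property is exactly what makes this well-defined and additive, $\dim(\bar a\bar b/A)=\dim(\bar a/A\bar b)+\dim(\bar b/A)$. Property (a) is then immediate. For (b): if $\bar b=f(\bar a)$ with $\bar a\in X$, then $\bar b\in\dcl(A\bar a)$, so $\dim(\bar b/A)\le\dim(\bar a\bar b/A)=\dim(\bar a/A)\le m$, giving $\delta(Y)\le\delta(X)$. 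For (c): $Y$ contains an open box $B_1\times\cdots\times B_n$ with each $B_i$ a ball, hence infinite by viscerality; choosing $a_i\in B_i$ with $a_i\notin\acl(Aa_1\cdots a_{i-1})$ — possible since each $B_i$ is infinite and definable over a small set, while $\acl$ of a small set is small in $\mathfrak{M}$ — additivity gives $\dim(a_1\cdots a_n/A)=n$, so $\delta(Y)\ge n$; the reverse inequality is trivial.

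For the dp-minimal, Hausdorff case I would instead take $\delta=\mathrm{dprk}$, the dp-rank. Dp-minimality is the statement $\mathrm{dprk}(M)=1$, so subadditivity of dp-rank gives $\mathrm{dprk}(M^k)\le k$, hence (with the trivial lower bound) property (a); property (b) holds because dp-rank is monotone under $\dcl$. For property (c) one can quote Simon and Walsberg \cite{Simon_Walsberg}, who prove that in a dp-minimal structure satisfying viscerality (their condition (\textbf{Inf})) the naive topological dimension $\dim$ coincides with dp-rank; since $Y$ itself has non-empty interior, $\mathrm{dprk}(Y)=\dim(Y)=n$. (Alternatively, superadditivity of dp-rank applied to the box $B_1\times\cdots\times B_n$ gives $\mathrm{dprk}(Y)\ge\sum_i\mathrm{dprk}(B_i)=n$ directly.) The hypothesis that the uniform topology is Hausdorff is used precisely to place us in the setting where those results of \cite{Simon_Walsberg} apply.

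The main obstacle in both cases is property (c): the fact that an open subset of $M^n$ has dimension exactly $n$, rather than collapsing to something smaller, is what forces the two extra hypotheses — additivity of $\acl$-rank (equivalently, exchange) on the one hand, and the additivity of dp-rank together with the Simon--Walsberg identification on the other. Everything else — the reduction to $\mathfrak{M}$, the behaviour of $\delta$ under surjections, and its value on $M^k$ — is routine.
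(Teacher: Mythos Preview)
Your argument is correct and, once unwound, is the same as the paper's. For the exchange case both you and the paper pick an $\acl$-independent $n$-tuple inside an open box contained in $Y$ and observe that it lies in $\acl$ of an $m$-tuple (a preimage under $f$), contradicting exchange since $m<n$. For the dp-minimal Hausdorff case, the paper uses Hausdorffness to choose pairwise disjoint sub-balls $B_i^j\subseteq B_i$ and then exhibits the formulas ``$f_i(x)\in B_i^j$'' as an ict-pattern of depth $n$ in $m$ free variables, citing \cite{additivity_dp_rank}; this is precisely your property (c) (the box has dp-rank $n$) composed with your property (b) (dp-rank does not increase along the surjection $f$). Your abstract dimension-function packaging is a tidy unification of the two cases, but the underlying content is identical.
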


\begin{proof} First we show that having a space-filling function implies that $T$ is not dp-minimal. Let $f=(f_1, \dots f_n):X \to Y$ with $X \subseteq M^m$ and $Y \subseteq M^n$ be a space-filling function.  Let $B=B_1 \times \dots \times B_n$ be a box in $Y$, where each $B_i$ is a ball.  As balls are infinite, for each $1 \leq i \leq n$ we may pick pairwise distinct elements $b^j_i \in B_i$ for $j \in \omega$.  For $1 \leq i \leq n$ consider the family of definable sets $\Xi_i=\{x \in X \wedge f_i(x) \in b_i^j : j \in \omega\}$.  Notice that the $\Xi_i$ represent the rows of a randomness pattern with $n$ rows in $m$ free variables. Hence the dp-rank of $X$ is at least $n$. But by the additivity of dp-rank (see \cite{additivity_dp_rank}) and the fact that $M$ is dp-minimal, the dp-rank of $X \subseteq M^m$ cannot be greater than $m$, leading to a contradiction.

Now suppose that $T$ satisfies the exchange property but there is $f: X \to Y$ a space-filling function.  Suppose all this data is definable over a set $E$.  As $Y$ has non-empty interior we may find $(a_1, \dots a_n) \in Y$ such that the set $\{a_1, \ldots, a_n\}$ is algebraically independent over $E$. Let $b_1, \dots, b_m \in X$ so that $f(b_1, \dots, b_m)=(a_1, \dots, a_n)$.  Now notice that $\{a_1, \dots a_n, b_1, \dots, b_m\}$ is a set containing an $E$-independent set of size $n$ but it is contained in $\acl(E \cup \{b_1, \dots, b_m\})$, clearly contradicting that $T$ has the exchange property.
\end{proof}

As noted above if $T$ has no space-filling functions then we have the desired property for cells.

\begin{lem}  
\label{good_cells}
Suppose that $T$ has no space-filling functions, $C \subseteq M^n$ is a cell, and there is a coordinate projection $\pi:M^n \to M^l$ mapping $C$ homeomorphically onto an open set $\pi[C]$. Then $\dim(C)=l$.
\end{lem}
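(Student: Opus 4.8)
The plan is to prove the two inequalities $\dim(C)\geq l$ and $\dim(C)\leq l$ separately. The first is immediate: by hypothesis $\pi[C]$ is open, hence has non-empty interior, so by the very definition of $\dim$ (applied to the coordinate projection $\pi\colon M^n\to M^l$) we get $\dim(C)\geq l$.

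For the reverse inequality, I would argue by contradiction. Suppose $\dim(C)=k>l$. By definition of $\dim$ there is a coordinate projection $\rho\colon M^n\to M^k$ such that $\rho[C]$ has non-empty interior. Set $\sigma:=(\pi\upharpoonright C)^{-1}\colon \pi[C]\to C$, which is a definable homeomorphism since $\pi\upharpoonright C$ is one. Then $g:=\rho\circ\sigma\colon \pi[C]\to \rho[C]$ is a composition of definable bijections, hence itself a definable bijection; in particular it is surjective onto $\rho[C]$, which has non-empty interior, and its domain $\pi[C]$ lies in $M^l$ with $l<k$. Thus $g$ is a definable space-filling function, contradicting the hypothesis that $T$ has no space-filling functions. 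Therefore $\dim(C)\leq l$, and combining with the first part, $\dim(C)=l$.

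There is no real obstacle here: the only point to check carefully is that $g$ is genuinely definable, which holds because coordinate projections are $\emptyset$-definable and $C$ (hence $\sigma$) is definable over whatever parameters were used to define the cell, so $g$ does witness a space-filling function in the precise sense of the definition. Note also that this argument subsumes the case where $C$ itself is open (then one may take $\pi$ to be the identity and $l=n$), so no separate treatment of the two clauses in the definition of cell is needed.
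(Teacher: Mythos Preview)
Your argument is correct and is essentially the same as the paper's: both compose the inverse of $\pi\upharpoonright C$ with a second projection $\rho$ (the paper calls it $\pi'$) to produce a definable surjection from a subset of $M^l$ onto a set with interior in $M^k$ for $k>l$, contradicting the no-space-filling-functions hypothesis. One small slip: $g=\rho\circ\sigma$ need not be a \emph{bijection}, since $\rho\upharpoonright C$ is not assumed injective; but it is surjective onto $\rho[C]$, and surjectivity is all the definition of space-filling requires, so the conclusion stands.
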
 

\begin{proof}  Suppose otherwise.  Thus $\dim(C)=m>l$ and let $\pi^{\prime}:M^n \to M^m$ be a projection so that $\pi[C]$ has non-empty interior.  But then $\pi^{\prime} \circ \pi^{-1} : \pi[C] \to \pi^{\prime}[C]$ is a surjection violating that $T$ has no space-filling functions.
\end{proof}

We can now prove our desired result:

\begin{thm}  \label{dim_inv} Suppose that $T$ is visceral, has definable finite choice, and has no space-filling functions.  If there is a definable bijection $f: X \to Y$ between the definable sets $X$ and $Y$, then $\dim(X)=\dim(Y)$.
\end{thm}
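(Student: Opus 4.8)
The plan is to establish the single inequality $\dim(Y) \le \dim(X)$ for an arbitrary definable bijection $f : X \to Y$; since $f^{-1}$ is again a definable bijection, applying this inequality to $f^{-1}$ yields $\dim(X) \le \dim(Y)$ and hence equality. So suppose $Y \subseteq M^n$ and $m = \dim(Y)$, and fix a coordinate projection $\rho : M^n \to M^m$ with $\rho[Y]$ having non-empty interior. Put $g := \rho \circ f : X \to M^m$; this is definable and its image $g[X] = \rho[Y]$ has non-empty interior in $M^m$. Thus it is enough to prove: \emph{if $g : X \to M^m$ is definable and $g[X]$ has non-empty interior, then $\dim(X) \ge m$.} Note that $g$ is in general only surjective onto $g[X]$ and not injective, so Lemma~\ref{invariance} and its corollaries do not apply directly; the point of the argument is to trade this loss of injectivity for an application of the ``no space-filling functions'' hypothesis.

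Assume for contradiction that $X \subseteq M^k$ and $\dim(X) = d < m$. Since $T$ is visceral and has DFC, Theorem~\ref{celldecomp}, clause $(I)_k$, lets us partition $X$ into finitely many definable cells $C_1, \dots, C_r$. Then $g[X] = g[C_1] \cup \cdots \cup g[C_r]$, and iterating Proposition~\ref{open} we obtain some $i$ with $g[C_i]$ having non-empty interior; replacing $X$ by $C_i$ we may assume $X = C$ is a single (non-empty) cell in $M^k$. By the definition of a cell there is a coordinate projection $\pi : M^k \to M^l$ — with $l = k$ and $\pi$ the identity in the case that $C$ is open — restricting to a homeomorphism of $C$ onto an open subset $\pi[C]$ of $M^l$. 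By the Lemma stated immediately before the present Theorem (and trivially when $C$ is open), $\dim(C) = l$, so $l = \dim(X) = d < m$.

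Finally, consider $h := g \circ (\pi \upharpoonright C)^{-1} : \pi[C] \to M^m$, which is definable. Because $\pi \upharpoonright C$ is a bijection of $C$ onto $\pi[C]$, the image of $h$ equals $g[C]$, which has non-empty interior in $M^m$; hence $h$ is a surjection from $\pi[C] \subseteq M^l$ onto a subset of $M^m$ with non-empty interior, and $l < m$. This is a definable space-filling function, contradicting our hypothesis. Therefore $\dim(X) \ge m = \dim(Y)$, as required.

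The expected main obstacle is not really analytic — Theorem~\ref{celldecomp}, Proposition~\ref{open}, and the cell-dimension Lemma already carry the technical weight — but rather the organizational insight that one should not reason with the bijection $f$ against coordinate projections directly (there being no control over how $f$ interacts with them); instead one projects $Y$ onto a witness of its dimension, tolerates the fact that the resulting composite is merely surjective, uses cell decomposition to slice the domain into pieces each admitting a homeomorphic coordinate projection, and only then invokes the absence of space-filling functions in its plain surjective form.
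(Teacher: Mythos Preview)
Your argument is correct and takes a genuinely different route from the paper's. The paper decomposes $X$ into cells, picks a cell $C_1$ of dimension $k = \dim(X)$, then decomposes $f[C_1]$ into cells as well, and uses Lemma~\ref{invariance} (which concerns \emph{injective} maps and is itself proved by a separate induction) to compare the two resulting coordinate-projection dimensions. You instead compose $f$ with a coordinate projection witnessing $\dim(Y)$, tolerate the resulting loss of injectivity, decompose only the domain, and invoke the no-space-filling-functions hypothesis directly in its surjective form. Your approach is more economical: it needs one cell decomposition rather than two and bypasses Lemma~\ref{invariance} entirely, whereas the paper's approach has the mild advantage that it isolates the purely injective content in a reusable lemma.

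One small imprecision: after replacing $X$ by the cell $C = C_i$, you write ``$l = \dim(X) = d < m$,'' but you have not arranged that the chosen cell has dimension exactly $d$; you only know $l = \dim(C) \le \dim(X_{\text{original}}) = d$ by Proposition~\ref{union}. This is harmless, since the contradiction only requires $l < m$, which follows immediately.
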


\begin{proof}  Notice that it suffices to prove, by symmetry, that $\dim(X) \leq \dim(Y)$. For notation assume that $X \subseteq M^m$ and $Y \subseteq M^n$.    Partition $X$ into cells $C_1, \dots, C_s$.  By Proposition \ref{union} $\dim(X)=\dim(C_i)=k$ for some $i$, say $i=1$.  Let $\pi_0: M^m \to M^k$ be a coordinate projection so that $\pi_0$ maps $C_1$ homeomorphically onto its image and so that $\pi_0[C_1]$ is open, which exists by the previous Lemma.  Let $D_1 \dots D_t$ be a partition of $f[C_1]$ into cells.  By Proposition \ref{open} one of $\pi_0 \circ f^{-1}[D_i]$ must have interior, say $i=1$.  Fix $\pi_1 :M^n \to M^l$ a coordinate projection so that $\pi_1$ maps $D_1$ homeomorphically onto its image and so that $\pi_1[D_1]$ is open.  Now note that $\pi_1\circ f \circ \pi_0^{-1}$ is a bijection from a subset of $M^k$ with non-empty interior to an open subset of $M^l$.   By  Lemma \ref{invariance} and its corollary $k=l$.  Thus $\dim(X) \leq \dim(Y)$.
\end{proof}

An immediate consequence of Proposition~\ref{no_space} and Theorem~\ref{dim_inv} is that in dp-minimal visceral theories with definable finite choice, our topological dimension function is invariant under definable bijections. One of the principal results of Simon and Walsberg in \cite{Simon_Walsberg} was that this is true even if one removes the hypothesis of definable finite choice.

Finally, we observe that if the definable uniform topology is Hausdorff and $T$ has the exchange property, then we have a useful alternative definition of the dimension function. As observed by Cubides-Kovacsics \emph{et al.} \cite{Cub} in the case of $P$-minimal fields, an old argument of Mathews \cite{Mathews} gives us the following:

\begin{thm}
\label{D_dim} Suppose that the topology on $\mathfrak{M}$ is Hausdorff and that $T$ has DFC and the exchange property. Let $\ll$ be the strict order on nonempty definable sets of $M^n$ given by $$B \ll A \Leftrightarrow B \subseteq A \textup{ and } B \textup{ has no interior in } A,$$ and for $X \subseteq M^n$ define $D(X)$ to be the foundation rank according to this order: that is, $D(X) \geq 0$ if $X \neq \emptyset$, and $D(X) \geq m+1$ if and only if there is some $Y \ll X$ such that $D(Y) \geq m$.

Then for every definable $X \subseteq M^n$, $\dim(X) = D(X)$.
\end{thm}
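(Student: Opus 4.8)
\emph{Strategy.} The plan is to prove the two inequalities $\dim(X)\le D(X)$ and $D(X)\le\dim(X)$ separately. The ingredients I would use are: the cell decomposition theorem (Theorem~\ref{celldecomp}, available since DFC is assumed); the invariance of $\dim$ under definable bijections (Theorem~\ref{dim_inv}, available since the exchange property rules out space-filling functions by Proposition~\ref{no_space}); Proposition~\ref{union}; the lemma just before Theorem~\ref{dim_inv}, computing the dimension of a cell from a coordinatizing projection; and Lemmas~\ref{finite_frontier}, \ref{qcont} together with Theorem~\ref{gencont}. I would first record two easy facts about $D$. It is invariant under definable homeomorphisms, since a homeomorphism $h\colon A\to B$ satisfies $\inte_B(h[C])=h[\inte_A(C)]$ and so carries $\ll$-chains to $\ll$-chains. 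And it is $\subseteq$-monotone: whenever $Y\subseteq Z\subseteq X$ are definable, $\inte_Z(Y)=\emptyset$ implies $\inte_X(Y)=\emptyset$ (contrapositively, if $W$ is open in $M^n$ with $\emptyset\neq X\cap W\subseteq Y$, then $X\cap W\subseteq Z\cap W\subseteq X\cap W$ forces $Z\cap W=X\cap W$, an open-in-$Z$ subset of $Y$), so $Y\ll Z$ and $Z\subseteq X$ give $Y\ll X$, whence $D(Z)\le D(X)$.

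\emph{Proof that $\dim(X)\le D(X)$.} Cell-decompose $X=C_1\cup\dots\cup C_s$; by Proposition~\ref{union} some $C_i$ has $\dim(C_i)=\dim(X)=:d$, and by $\subseteq$-monotonicity $D(X)\ge D(C_i)$, so it suffices to show $D(C)\ge d$ for a cell $C$ of dimension $d$. By the definition of a cell together with the lemma before Theorem~\ref{dim_inv}, $C$ is definably homeomorphic to a nonempty open $U\subseteq M^d$, and $D$ is a homeomorphism invariant, so it is enough to show $D(U)\ge d$. For this, pick a ball $B_1\times\dots\times B_d\subseteq U$ and points $a_j\in B_j$, and set $U_i=U\cap(\{a_1\}\times\dots\times\{a_i\}\times M^{d-i})$ for $0\le i\le d$. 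Each $U_i$ is nonempty and, via the projection forgetting the first $i$ coordinates, is definably homeomorphic to an open subset of $M^{d-i}$; moreover, under that identification $U_i$ corresponds to a hyperplane slice of the open set corresponding to $U_{i-1}$, which has empty interior there since a ball is infinite, so $U_i\ll U_{i-1}$. Thus $U=U_0\gg U_1\gg\dots\gg U_d$ is an $\ll$-chain of length $d$, and $D(U)\ge d$.

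\emph{Proof that $D(X)\le\dim(X)$.} Since $D(X)\ge m$ is witnessed by a chain $X=Y_0\gg Y_1\gg\dots\gg Y_m$ with $Y_m\neq\emptyset$, it suffices to prove the statement $(\star)$: if $Y\ll X$ then $\dim(Y)<\dim(X)$, because then $\dim(X)=\dim(Y_0)>\dim(Y_1)>\dots>\dim(Y_m)\ge 0$ forces $m\le\dim(X)$. To prove $(\star)$, set $d=\dim(X)\ge\dim(Y)$ and suppose for contradiction $\dim(Y)=d$. If $d=n$, then $\dim(Y)=n$ gives a ball $W\subseteq Y\subseteq X$ with $W$ open in $X$, contradicting $\inte_X(Y)=\emptyset$; so assume $d<n$. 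Choose a coordinate projection $\pi\colon M^n\to M^d$ with $\pi[Y]\supseteq B$ a ball, and replace $X,Y$ by $X\cap\pi^{-1}[B]$ and $Y\cap\pi^{-1}[B]$ (still of dimension $d$, since both project onto $B$ and sit inside $X$, resp. $Y$); as $X\cap\pi^{-1}[B]$ is open in $X$, it now suffices to show $Y\cap\pi^{-1}[B]$ has nonempty interior in $X\cap\pi^{-1}[B]$. Next, $\{\ob{a}\in B:X_{\ob{a}}\text{ is infinite}\}$ has dimension $<d$: otherwise it has interior, and restricting over a sub-ball and projecting away all fiber coordinates but one in which the fibers remain infinite, one obtains infinite fibers in $M^1$ over a ball; the compactness argument from the proof of Theorem~\ref{gencont} then produces a single $b$ in the interior of all these fibers over an interior-containing set of base points, and Lemma~\ref{qcont}(II) yields an interior point, contradicting $\dim(X)=d<n$. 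Hence, shrinking $B$, all fibers of $X$ over $B$ are finite; by DFC, a further shrinking to make the fiber size constant, and Theorem~\ref{gencont} applied componentwise, $X$ over a ball $B'$ is a finite disjoint union $\Gamma_1\cup\dots\cup\Gamma_l$ of graphs of continuous definable functions $B'\to M^{n-d}$. Since these graphs are disjoint with common domain and the topology is Hausdorff, each $\Gamma_j$ is open in $X$. Over $B'$ each fiber $Y_{\ob{a}}$ is a nonempty subset of the finite set of values at $\ob{a}$ of $\Gamma_1,\dots,\Gamma_l$; partitioning $B'$ by which subset occurs and applying Proposition~\ref{open}, over some sub-ball $B''$ the set $Y$ contains the restriction to $B''$ of some $\Gamma_{j_0}$, which is open in $X$ and contained in $Y$ — contradicting $\inte_X(Y)=\emptyset$. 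This proves $(\star)$, hence $D(X)\le\dim(X)$.

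\emph{Where the difficulty lies.} The crux is $(\star)$, and within it the point that a cell need not be relatively open in the set it decomposes, so relative interiors cannot simply be read off a cell decomposition; the fiber-finiteness analysis above (parallel to arguments of Mathews~\cite{Mathews}) is where the work concentrates. An alternative route, perhaps smoother given that the exchange property is available, would be to interpose the algebraic dimension $\dim_{\acl}$ and prove $\dim(X)=\dim_{\acl}(X)=D(X)$: the first equality still requires cell decomposition together with the fact that $\acl$-closed sets have empty interior (a consequence of viscerality), while the second is the purely combinatorial statement that the foundation rank of $\ll$ coincides with the rank function of the $\acl$-pregeometry.
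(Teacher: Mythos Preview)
The paper does not give its own proof of this theorem: it simply cites \cite{Cub}, Corollary~3.4, which in turn adapts an argument of Mathews~\cite{Mathews}. So there is no line-by-line comparison to make; your write-up is a genuine attempt at a direct proof, and much of it is sound. The monotonicity of $D$ under inclusion and its invariance under definable homeomorphisms are correct as you argue them; the inequality $\dim(X)\le D(X)$ via slicing a top-dimensional cell is fine (and you correctly invoke the lemma preceding Theorem~\ref{dim_inv}, which needs no space-filling functions, available from exchange via Proposition~\ref{no_space}); and the reduction of $D(X)\le\dim(X)$ to the key step~$(\star)$ is clean. The endgame of $(\star)$---once fibers are finite, split $X$ over a sub-ball into graphs $\Gamma_1,\ldots,\Gamma_l$ of continuous functions, use Hausdorffness and continuity to see each $\Gamma_j$ is relatively open in $X$, then use Proposition~\ref{open} to find one $\Gamma_{j_0}$ contained in $Y$---is correct and is exactly where the Hausdorff hypothesis earns its keep.

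The gap is in your ``step~3'': the claim that $\{\bar a\in B: X_{\bar a}\text{ infinite}\}$ has dimension $<d$. After reducing to $Z\subseteq M^{d+1}$ with infinite one-dimensional fibers over a ball, you appeal to ``the compactness argument from the proof of Theorem~\ref{gencont}'' to produce a single $b$ lying in $\inte(Z_{\bar a})$ for all $\bar a$ in a sub-ball, and then Lemma~\ref{qcont}(II). But that compactness argument (as in Proposition~\ref{open} and its use in Theorem~\ref{gencont}) works only because there the ``good'' set for each $\bar a$ is a \emph{cofinite subset of a fixed ball $B_{m+1}$}: one chooses the test points $b_i$ once and for all from $B_{m+1}$, and every $\bar a$ excludes only finitely many of them. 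In your situation the fibers $Z_{\bar a}$ are arbitrary varying infinite subsets of $M$, possibly pairwise disjoint; there is no fixed ambient ball from which to draw the $b_i$, and no compactness argument yields a common $b$. So Lemma~\ref{qcont}(II) is not available as stated.

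The repair is precisely the ``alternative route'' you flag at the end, and this is in essence the Mathews argument behind the reference the paper cites. Using exchange: pick $\bar a\in B$ which is $\acl$-independent over the defining parameters (possible since balls are infinite and, in an $\omega$-saturated model, every infinite definable set contains a point outside the algebraic closure of any given finite set), then pick $c\in Z_{\bar a}\setminus\acl(\bar a)$. Now $(\bar a,c)$ has $\acl$-rank $d+1$. On the other hand, cell-decomposing $Z\setminus\inte(Z)$ shows every point there lies in a cell coordinatized by a projection to some $M^l$ with $l\le d$, hence has $\acl$-rank $\le d$. So $(\bar a,c)\in\inte(Z)$, which gives $\dim X\ge d+1$, the desired contradiction. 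In short: your closing paragraph identified the right route; it is not merely an alternative but the actual mechanism by which the exchange hypothesis closes the gap.
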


\begin{proof}
See \cite{Cub}, Corollary~3.4.
\end{proof}

As a corollary, we derive the following, which was originally proved for the special case of $P$-minimal fields in \cite{Cub}:

\begin{cor}
\label{small_boundaries}
Suppose that $T$ is visceral, has DFC and the exchange property, and the uniform topology is Hausdorff. 

If $M \models T$ and $X \subseteq M^n$ is definable, then $\dim(\overline{X} \setminus X) < \dim(X)$.
\end{cor}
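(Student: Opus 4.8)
The plan is to pass to the closure and reduce everything to the foundation-rank characterization of dimension in Theorem~\ref{D_dim} together with the additivity of $\dim$ over finite definable unions (Proposition~\ref{union}). Note first that $\overline{X}$ is definable, since the uniform structure has a definable base; set $Z := \overline{X}$ and $Y := \overline{X} \setminus X$, both definable, with $Z = X \cup Y$ a disjoint union. We may assume $X \neq \emptyset$ and $Y \neq \emptyset$, since otherwise the statement is immediate.

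The one genuinely topological point is to observe that $Y \ll Z$ in the sense of Theorem~\ref{D_dim}, i.e. that $Y$ has empty interior inside $Z$. Indeed, if some open $U \subseteq M^n$ satisfied $\emptyset \neq U \cap Z \subseteq Y$, then $U$ would be an open set meeting $\overline{X}$ and yet disjoint from $X$, which is absurd. Hence $Y \ll Z$.

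Now the hypotheses of the corollary (visceral, DFC, exchange property, Hausdorff topology) are precisely those of Theorem~\ref{D_dim}, so $\dim = D$ on all definable subsets of $M^n$. From $Y \ll Z$ and the definition of the foundation rank $D$ we get $D(Z) \geq D(Y) + 1$, hence $\dim(Z) > \dim(Y)$. On the other hand Proposition~\ref{union} gives $\dim(Z) = \max\{\dim(X), \dim(Y)\}$. Combining the two, we must have $\dim(Z) = \dim(X)$, and therefore $\dim(\overline{X} \setminus X) = \dim(Y) < \dim(Z) = \dim(X)$, which is the claim.

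There is no real obstacle here beyond correctly lining up the two imported results; the only step that is not purely formal is the elementary observation that an open set which meets $\overline{X}$ must also meet $X$, which is exactly what makes $Y \ll Z$ hold.
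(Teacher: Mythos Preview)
Your proof is correct and follows essentially the same approach as the paper: the paper's own proof simply cites Theorem~3.5 of \cite{Cub}, whose argument is exactly what you have written out --- use Theorem~\ref{D_dim} to identify $\dim$ with the foundation rank $D$, observe that $\overline{X}\setminus X$ has empty interior in $\overline{X}$ so that $D(\overline{X}) > D(\overline{X}\setminus X)$, and then apply Proposition~\ref{union} to conclude $\dim(\overline{X}) = \dim(X)$. You have simply spelled out the details that the paper leaves to the reference.
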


\begin{proof}
The same proof as in Theorem~3.5 of \cite{Cub} applies, using the previous Corollary and our Proposition~\ref{union} (which they call (HM$_1$)), but as the anonymous referee noted, we need to make a minor correction to the original argument of Mathews for Lemma 8.14 of \cite{Mathews}, as follows.

From \cite{Mathews}, Lemma 8.14 is an ``if and only if'' statement, and we need only correct the following implication  (according to the notation there):

\textit{$(*)$ If $\mathfrak{M}$ is a Hausdorff first-order topological structure which satisfies the Cell Decomposition Property and the exchange property for algebraic closure, and if $X \subseteq M^n$ is definable with $\dim(X) \geq k+1$, then there is a definable $Y \subseteq X$ such that $\dim(Y) \geq k$ and $Y$ has no interior in $X$.}

In the statement above, note that:

$\bullet$ A ``first-order topological structure'' includes the context of definable uniform structures, the topology being the usual uniform topology;

$\bullet$ Mathews's Cell Decomposition Property holds in our context, because it is the same as our cell decomposition result (Theorem~\ref{celldecomp} above);

$\bullet$ The quantity ``$\dim(X)$'' here is defined by Mathews using cell decomposition, this being the maximal $n$ such that there exists some $n$-cell $C$ which forms part of a cell decomposition of the set $X$; and

$\bullet$ It turns out that $\dim(X)$ as so defined by Mathews will be the same as the topological definition of $\dim(X)$ which we use, due to Lemma 8.12 of \cite{Mathews}.

Now we recall the original proof of $(*)$ given by Mathews. By the definition of topological dimension, there is some coordinate projection map $\pi: M^n \rightarrow M^{k+1}$ such that the image $\pi[X]$ has interior. Define an equivalence relation $\overline{a} \equiv \overline{b}$ on $\pi[X]$ via equality of the first coordinates. Pick any $\overline{a} = (a_1, \ldots, a_{k+1})$ in the interior of $\pi[X]$ and let $Y = \pi^{-1}[[\overline{a}]_\equiv]$, the inverse image of the equivalence class of $\overline{a}$. Since $\pi[X]$ is open, $\dim(Y) \geq k$. Mathews incorrectly asserts that $Y$ must have no interior in $X$, but as noted by the referee, one can construct counterexamples, such as $X = \{ (x, y, z) \in \R^3 \, : \, xyz=0\}$ in the real-closed field $\R$, $\pi: \R^3 \to \R^2$ which maps $(x,y,z)$ onto $(x,y)$, and then the inverse image of the $\equiv$-class of $(0,0)$ in fact has interior in $X$.

To fix this argument, we just need to reduce to the case when $X$ is a cell. By Proposition~\ref{open} above, at least one cell $C$ in some cell decomposition of $X$ must satisfy $\dim(C) = \dim(X) \geq k+1$. Say that $C$ is an $\ell$-cell, so there is a coordinate projection $\pi : M^n \rightarrow M^\ell$ such that $\pi$ maps $C$ homeomorphically onto the open set $\pi[C]$. Since we are assuming the exchange property, by Proposition~\ref{no_space} there are no space-filling functions, and so by Lemma~\ref{good_cells} we have that $\ell = \dim(C) \geq k+1$.  Now proceed as before using $C$ in place of $X$, defining the equivalence relation $\equiv$ on $\pi[C]$ according to equality of first coordinates and letting $Y \subseteq C \subseteq X$ be the inverse image under $\pi$ of any element $\overline{a} \in \pi[C]$, and it is routine to check that (i) $\dim(Y) \geq \dim(X) - 1 \geq k$, and (ii) $Y$ has no interior in $C$, and hence not in $X$ either, as desired.
\end{proof}

\subsection{Viscerally ordered Abelian groups}

Our original motivation for investigating the concept of viscerality was the realization that, for divisible ordered Abelian groups, it provides a context in which well-known results for o-minimal structures can be generalized. In this subsection, we will clarify what viscerality means for such groups.

Throughout this subsection, let $\mathfrak{R}= ( R, +, <, \dots )$ be an expansion of an ordered Abelian group. Applying our notion of viscerality to the order topology gives the following definition.

\begin{definition}
The structure $\mathfrak{R}$ is \emph{viscerally ordered} if:

\begin{enumerate}
\item The ordering on $\mathfrak{R}$ is dense, and
\item Every infinite definable subset $X \subseteq R$ has interior (in the order topology).
\end{enumerate}

The complete theory $T$ is \emph{viscerally ordered} if all of its models are.

\end{definition}

It is possible for a theory of a densely ordered group to be visceral according to some definable uniform structure which does \textbf{not} generate the order topology, yet not be viscerally ordered, as the following example shows.

\begin{example}
\label{dp_2}
Consider $\mathfrak{R} = ( \R, +, <, \Q )$ (the ordered group of the reals under addition, expanded by a unary predicate for $\Q$) and let $T = \Th(\mathfrak{R})$. This structure was studied in \cite{DMS} and \cite{DG}, in which it was proven that $T$ has quantifier elimination, o-minimal open core, and dp-rank $2$.

The dense, codense definable set $\Q$ means that $\mathcal{R}$ is not viscerally ordered. However, if we consider the uniform structure generated by $$D_\epsilon = \{(x,y) \in \R^2 \, : \, |x - y| < \epsilon \textup{ and } x-y \in \Q\}$$ (where $\epsilon$ ranges over positive elements of $\R$), then by quantifier elimination it is clear that this generates a visceral definable uniform structure on $\mathfrak{R}$, and so $T$ is visceral.
\end{example}

The complete theory of any ordered structure has definable finite choice, so in particular our cell decomposition result applies to viscerally ordered Abelian groups.

If $\mathfrak{R}$ is a \textbf{divisible} ordered Abelian group, then we can summarize the relationship between various tameness notions for $T = \Th(\mathfrak{R})$ as follows:

$$ \textup{ (weakly) o-minimal } \Rightarrow \textup{ dp-minimal } \Rightarrow \textup{ viscerally ordered}$$

The first implication was shown in \cite{DGL} and the second implication was proved by Simon \cite{S}. In Section 4 below, we construct examples showing that the second implication cannot be reversed; indeed, we show how to build viscerally ordered divisible Abelian groups which are not even NIP. We conjecture that the first implication cannot be reversed, either.



If $\mathfrak{R}$ is a \textbf{densely ordered} Abelian group which is not necessarily divisible, then Simon's theorem does not apply: $\mathfrak{R}$ may be dp-minimal but not viscerally ordered. For example, consider $\mathfrak{R} = (\Z_{(p)}, +, <)$, where $\Z_{(p)} \subseteq \Q$ consists of all fractions $r/s$ whose denominator is relatively prime to a fixed prime number $p$; as shown in \cite{Goodrick_dpmin}, this theory is dp-minimal, but it is not viscerally ordered, since the set of all $p$-divisible elements is dense and codense.

There do exist densely-ordered, non-divisible groups which are viscerally ordered:

\begin{example}
Let $R = \Z \times \Q$ with the lexicographic ordering $<$ in which the $\Z$-coordinate dominates: $(a,b) < (c,d)$ if $a < c$, or else $a=c$ and $b < d$.

We will use a quantifier elimination result from \cite{JSW} which is a simplification of the more general quantifier elimination proved by Cluckers and Halupczok \cite{CH} for general ordered Abelian groups. First, note that the group $R$ is what is called \emph{non-singular} in \cite{JSW}: for every prime $p$, the quotient $R / pR$ is finite. For non-singular ordered Abelian groups, it is shown in \cite{JSW} that one has quantifier elimination in the language $\mathcal{L}$ containing the following symbols:

\begin{enumerate}
\item Symbols for $+$, $-$ (a unary function), and $\leq$;
\item For each natural number $n$ and each class $\overline{a}$ in $R / nR$, a unary predicate $U_{n,\overline{a}}$ for the preimage of $\overline{a}$;
\item Constant symbols for each point in the countable model $R$; and
\item For each prime $p$ and each $a \in R$ which is not $p$-divisble, a unary symbol for $H_{a,p}$, the largest convex subgroup of $R$ such that $a \notin H_{a,p} + p R$.
\end{enumerate}

In the structure $R = \Z \times \Q$, it is easy to check that the subgroups $H_{a,p}$ can only be $\{0\} \times \Q$ (if $a = (k,x)$ with $k$ not $p$-divisible) or $\{(0,0)\}$ (if $a = (0, x)$) and that the unary predicates $U_{n, \overline{a}}$ define open convex sets. From this it is clear that any definable set $X \subseteq R$ is either finite or has interior, hence $T = \Th(\langle R, <, + \rangle)$ is viscerally ordered. The theory $T$ is also dp-minimal by Proposition~5.1 of \cite{JSW} since $R$ is non-singular.

\end{example}

On the other hand, viscerally ordered Abelian groups are not too far from being divisible:

\begin{lem}
\label{dense_visceral}
If $\mathfrak{R}$ is viscerally ordered, then for any positive integer $n$ and any positive $\varepsilon \in R$, there is a $\delta \in R$ such that $0 < \delta \leq \varepsilon$ and $(0, \delta) \subseteq n R$.
\end{lem}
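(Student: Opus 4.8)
The plan is to apply viscerality to the definable subgroup $nR$ and then translate the resulting open set down to the origin using the group structure.

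First I would note that $nR = \{\, y \in R : \exists x\,( x + \cdots + x = y)\,\}$ (with $n$ summands) is definable and infinite. It is definable since it is the image of the term-defined function $x \mapsto nx$; it is infinite because $\mathfrak{R}$ is densely ordered, hence infinite, and $x \mapsto nx$ is strictly increasing on an ordered abelian group and so injective. By the second clause in the definition of viscerally ordered, $nR$ therefore has nonempty interior in the order topology: there is an interval $(a,b)$ with $a<b$ and $(a,b) \subseteq nR$.

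Next I would use that $nR$ is a subgroup. Fixing any $p \in (a,b)$ and setting $\mu = \min(p-a,\, b-p) > 0$, any $x \in R$ with $-\mu < x < \mu$ satisfies $a < p+x < b$, so $p + x \in nR$; since also $p \in nR$, we get $x = (p+x) - p \in nR$. Hence $(-\mu,\mu) \subseteq nR$. Finally, taking $\delta = \min(\mu, \varepsilon)$ gives $0 < \delta \leq \varepsilon$ and $(0,\delta) \subseteq (-\mu,\mu) \subseteq nR$, as required.

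There is no serious obstacle here: the lemma is elementary once one thinks to apply viscerality to $nR$. The only point that needs a moment's care is verifying that $nR$ is genuinely infinite — which is exactly torsion-freeness of ordered abelian groups together with density — so that the infiniteness hypothesis in the definition of viscerally ordered can be invoked.
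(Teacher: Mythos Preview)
Your proof is correct and follows essentially the same approach as the paper: apply viscerality to the infinite definable subgroup $nR$ to obtain an open interval inside it, then use the subgroup property to translate this interval to a neighborhood of $0$. The only cosmetic difference is that the paper first shrinks the interval to have length at most $\varepsilon$ and then translates, whereas you translate first and shrink at the end by taking $\delta = \min(\mu,\varepsilon)$.
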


\begin{proof}
The definable set $nR$ is infinite, hence has interior since it is viscerally ordered. Therefore we may find elements $a, \delta \in R$ such that $0 < \delta \leq \varepsilon$ and the interval $(a, a + \delta) \subseteq nR$. Since any element of $(0, \delta)$ is the difference of two points from $(a, a+\delta)$ and $nR$ is a subgroup, $(0, \delta) \subseteq nR$.
\end{proof}

Notice that we do not claim any type of ``monotonicity theorem'' for a general viscerally ordered Abelian group.  In particular, we would like to be able to show that if $T$ is visceral, $\mathfrak{M} \models T$, and $f: M \to M$ is definable there is a cofinite open set $U \subseteq M$ so that if $x \in U$ then there is a neighborhood $V$ of $x$ so that $f$ is either monotone increasing, monotone decreasing, or constant when restricted to $V$.  We do not know if this holds in general, in fact even if $T$ is dp-minimal we do not know whether or not this holds.  We can  verify this in one situation.  To this end recall:

\begin{definition}  $T$ is called {\em locally o-minimal} if for any model $\mathfrak{M} \models T$, any definable $X \subseteq M$, and any $x \in M$ there is $\varepsilon>0$ so that $[x, x+\varepsilon) \cap X$ is either empty, $[x, x+\varepsilon), (x,x+\epsilon)$, or just ${x}$, and the same condition for $(x-\varepsilon, x]$.
\end{definition}

See \cite{TV} for generalities on local o-minimality.  In particular recall that any weakly o-minimal theory is locally o-minimal as is the theory of any ultraproduct of o-minimal theories.  We will construct examples of viscerally ordered locally o-minimal theories in the following section.  If we add the assumption of local o-minimality to viscerality, we achieve our desired monotonicity result:

\begin{prop}  If $T$ is viscerally ordered and locally o-minimal, $\mathfrak{M} \models T$ and $f: M \to M$ is definable then there is an open, definable, and cofinite set $U$ so that if $x \in U$ then there is a neighborhood $V$ of $x$ so that $f$ is either monotone increasing, monotone decreasing, or constant when restricted to $V$.
\end{prop}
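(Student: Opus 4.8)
The plan is to reduce $f$ to a continuous function and then classify, at each remaining point, whether $f$ is locally rising, locally falling, or locally flat on each side. Let $U$ be the set of $x\in M$ such that $f$ is strictly monotone or constant on some open interval containing $x$. This set is definable and open (any interval witnessing membership of $x$ witnesses membership of each of its points), and for $x\in U$ the witnessing interval is exactly a neighbourhood $V$ as in the statement; so it suffices to prove that $U$ is cofinite. Passing to an elementary extension affects neither this task (finiteness of $M\setminus U$ transfers both ways) nor the hypotheses, so I may assume $\mathfrak{M}$ is $\omega$-saturated. Suppose $U$ is not cofinite. Then $M\setminus U$ is an infinite definable set, hence has nonempty interior by viscerality, and so contains an open interval which, by Proposition~\ref{contvisc} (as $f$ has only finitely many discontinuities), can be shrunk to an open interval $I$ on which $f$ is continuous. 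Thus $f$ is continuous on $I$ and no point of $I$ is a point of local monotonicity or local constancy; this is the situation to be contradicted.

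For $a\in I$ the three definable sets $\{z: f(z)>f(a)\}$, $\{z: f(z)<f(a)\}$, $\{z: f(z)=f(a)\}$ partition $M$, so by local o-minimality exactly one of them contains an interval $(a,a+\varepsilon)$; this assigns to $a$ a \emph{right-type} $+$, $-$, or $0$, and symmetrically (using intervals $(a-\delta,a)$) a \emph{left-type}, where I declare left-type $+$ to mean $f(z)<f(a)$ just to the left of $a$, so that right-type $+$ together with left-type $+$ is the local picture of a strictly increasing function. Having a prescribed right-type, or left-type, is a definable property of $a$; partitioning $I$ accordingly and applying Proposition~\ref{open} (or one-variable viscerality) twice, I pass to an open subinterval $I'$ on which the right-type is a fixed $\rho$ and the left-type a fixed $\sigma$. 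Replacing $f$ by $-f$ --- which swaps $+$ and $-$ in both types, fixes $0$, and changes neither $U$ nor $I'$ --- I may assume $(\rho,\sigma)\in\{(+,+),(+,-),(+,0),(0,+),(0,0)\}$.

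The three cases with a $0$ are handled directly. If $(\rho,\sigma)=(0,0)$ then $f$ is identically $f(a)$ on an interval around each $a\in I'$, so every point of $I'$ is one of local constancy --- absurd. If $(\rho,\sigma)=(+,0)$, fix $a\in I'$ and an interval $(a-\delta,a)$ on which $f\equiv f(a)$, pick $z\in I'\cap(a-\delta,a)$; then $f(z)=f(a)$, but $z$ has right-type $+$, so $f>f(z)=f(a)$ at points just to the right of $z$ that still lie in $(a-\delta,a)$ --- contradiction. The case $(0,+)$ is the left--right mirror.

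It remains to treat $(+,-)$ and $(+,+)$. In case $(+,-)$ every point of $I'$ is a strict local minimum. Here I would first extract, by saturation, a single $\delta^{*}>0$ witnessing the left-type of \emph{all} points of $I'$ simultaneously: the definable sets $W_\delta=\{b\in I': f(z)>f(b)\text{ for all }z\in(b-\delta,b)\}$ decrease as $\delta$ grows and have union $I'$, so if every $I'\setminus W_\delta$ were nonempty the partial type ``$x\in I'\setminus W_\delta$ for all $\delta>0$'' would be finitely satisfiable, hence realized, contradicting that each point of $I'$ has left-type $-$; thus $I'=W_{\delta^{*}}$. Now pick $a\in I'$, an interval $(a,a+\varepsilon)$ on which $f>f(a)$, and $b\in I'\cap(a,a+\min(\varepsilon,\delta^{*}))$: then $f(b)>f(a)$ since $a$ has right-type $+$, while $f(a)>f(b)$ since $a\in(b-\delta^{*},b)$ and $b\in W_{\delta^{*}}$ --- contradiction. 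In case $(+,+)$ I claim $f$ is strictly increasing on $I'$, contradicting that no point of $I'$ is one of local monotonicity. If not, take $a<b$ in $I'$ with $f(a)\ge f(b)$ and set $S=\{z\in(a,b]: f(z)\le f(a)\}$; it is nonempty ($b\in S$) and disjoint from an interval $(a,a+\varepsilon)$ (as $a$ has right-type $+$), so $c:=\inf S$ lies in $(a,b]\subseteq I'$, and $f>f(a)$ on $(a,c)$ gives $f(c)\ge f(a)$ by continuity. By local o-minimality the trace of $S$ on a small interval $[c,c+\tau)$ is $\emptyset$, $\{c\}$, $(c,c+\tau)$, or $[c,c+\tau)$; the first contradicts $c=\inf S$, and in each of the others a short computation of $f(c)$ and of $f-f(c)$ just to the right of $c$ forces $c$ to have right-type $-$ or $0$ (against $\rho=+$) or, since $f>f(a)=f(c)$ on $(a,c)$, left-type $-$ (against $\sigma=+$). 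This completes the proof. The main obstacle I anticipate is case $(+,-)$: excluding a whole interval of strict local minima is exactly where local o-minimality alone does not suffice and saturation must be invoked; the other delicate point is the bookkeeping in case $(+,+)$, an intermediate-value argument that must be run in an ordered group whose closed intervals need not be compact.
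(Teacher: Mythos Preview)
Your overall architecture is right and is essentially the one the paper adopts (following MMS): reduce to a continuous $f$ on an interval, assign one-sided types using local o-minimality, pass to a subinterval where the types are constant, and eliminate each case. Your treatment of the cases with a $0$ is clean and correct. The two remaining cases, however, each contain a genuine gap, and both are repaired by precisely the tool the paper singles out: Lemma~\ref{boundf} (the analogue of MMS Lemma~3.10).

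\medskip

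\textbf{Case $(+,-)$.} Your compactness step does not go through with $\omega$-saturation: the partial type ``$x\in I'\setminus W_\delta$ for every $\delta>0$'' is over the parameter set $\{\delta:\delta>0\}$, which has size $|M|$, so no amount of saturation of $\mathfrak{M}$ over itself can realize it. Passing to a larger model does not help either, since in the extension there may be new infinitesimal $\delta$'s for which the realization lies in $W_\delta$. The correct device is to encode $b\mapsto\{\delta>0: f(z)>f(b)\text{ on }(b-\delta,b)\}$ as a map into a definable sort and apply Lemma~\ref{boundf} to extract a subinterval $I''\subseteq I'$ and a uniform $\delta^*$; your endgame (pick $a\in I''$, then $b\in(a,a+\min(\varepsilon,\delta^*))\cap I''$, and compare) then works verbatim.

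\medskip

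\textbf{Case $(+,+)$.} Writing $c:=\inf S$ is not legitimate: visceral, locally o-minimal ordered groups need not be definably complete (already weakly o-minimal ones are not), and the statement ``$S$ has no infimum'' is first-order, so it persists in every elementary extension. Thus your intermediate-value analysis has nowhere to anchor. Again Lemma~\ref{boundf} is the fix: map $a\mapsto\{\varepsilon>0:f>f(a)\text{ on }(a,a+\varepsilon)\}$, obtain a subinterval $J$ and a uniform $\varepsilon^*$; then for any $a<b$ in $J$ with $b-a<\varepsilon^*$ one has $f(b)>f(a)$, so $f$ is strictly increasing on every sub-interval of $J$ of diameter $<\varepsilon^*$, contradicting $J\subseteq M\setminus U$.

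\medskip

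In short, your outline is the paper's outline; what is missing is exactly the bounding lemma for maps into definable sorts, which replaces both your appeal to saturation in $(+,-)$ and your appeal to infima in $(+,+)$.
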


\begin{proof}  This proposition follows \emph{mutatis mutandis} from the proof of Theorem 3.4 in \cite{MMS}.  All of the Lemmas 3.6, 3.7, 3.8, 3.9 of \cite{MMS} can be proved essentially the same way in the viscerally ordered and locally o-minimal context with only very minor changes.  Notice that Lemma 3.10 from \cite{MMS} is our Lemma \ref{boundf}.  Note that in particular the assumption of local o-minimality is exactly what is needed to guarantee that if $a \in M$ then for some interval I with left endpoint $a$ either $f(x)>f(a)$ for all $x \in I$, $f(x)<f(a)$ for all $x \in I$, or $f(x)=f(a)$ for all $x \in I$ while this conclusion apparently does not hold in the absence of local o-minimality.
\end{proof}

\section{Examples of Viscerally Ordered and Dp-minimal Theories}

We show how to construct examples of viscerally ordered theories.
 We begin with $(V,\Gamma)$ a valued ordered rational vector space.\footnote{We note that we choose to work with vectors spaces over $\Q$ for simplicity, but we could just as well consider ordered vector spaces over any ordered field $K$.}  Thus \[\langle V,+,<,0,\lambda\rangle_{\lambda \in \Q}\] is an ordered rational vector space and  $\langle \Gamma,<\rangle$ is a linear ordering with a largest element $\gamma^*$.   Further there is a map $v: V \to \Gamma$ so that:
\begin{itemize}
\item $v(\lambda x)=v(x)$ for all $\lambda \in \Q \setminus\{0\}$.
\item If $0 < x<y$ then $v(x) \geq v(y)$.
\item $v(x+y) \geq \min\{v(x),v(y)\}$.
\item $v(x+y)=\min\{v(x),v(y)\}$ if $v(x) \not= v(y)$.
\item $v(x)=\gamma^*$ if and only if $x=0$.
\item $v$ is onto $\Gamma$.
\end{itemize}

The axioms above imply that $v$ is a convex valuation, that is, \[\{b \in V : v(b)=c \text { and } b>0\}\] is convex for any fixed $c \in \Gamma \setminus \{\gamma^*\}$.

  Let $\mathcal{L}_{O}$ be a relational language in which $\langle \Gamma,< \rangle$ eliminates quantifiers.  Let $\mathcal{L}_{VS}=\{+,<,0,\lambda\}_{\lambda \in \Q}$ be the language of ordered rational vector spaces.  We consider $(V,\Gamma)$ as a structure in the language $\mathcal{L}_{\Gamma}=\mathcal{L}_{VS} \cup \mathcal{L}_{O} \cup \{v\}$.  Let $T_{\Gamma}$ be the $\mathcal{L}_{\Gamma}$ theory of $(V,\Gamma)$.

In order to establish quantifier elimination for $T_{\Gamma}$, we will use the following criterion:

\begin{fact}
\label{qe_crit}
Suppose that $T$ is a theory with the following property:

Whenever $\mf{B}_0$ and $\mf{B}_1$ are models of $T$, $\mf{A}$ is a common substructure of both $\mf{B}_0$ and $\mf{B}_1$, $A \neq B_0$, and $\mf{B}_1$ is $|A|^+$-saturated, then there is some $b_0 \in B_0 \setminus A$ and some $b_1 \in B_1$ such that $\qftp(b_0/A)=\qftp(b_1/A)$.

Then $T$ has quantifier elimination.
\end{fact}

\begin{proof}
This is Corollary B.11.10 of \cite{avv}.
\end{proof}

We now have our basic quantifier elimination result, which is inherent in \cite{Kuhlmann1997} but we sketch out a simple proof for completeness.

\begin{prop} The theory $T_{\Gamma}$ eliminates quantifiers in the language $\mathcal{L}_\Gamma$.
\end{prop}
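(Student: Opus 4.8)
The plan is to verify the standard substructure criterion for quantifier elimination. Let $(V_1,\Gamma_1)$ and $(V_2,\Gamma_2)$ be models of $T_\Gamma$ with $(V_2,\Gamma_2)$ sufficiently saturated, let $(A,\Delta)$ be a common $\mathcal{L}_\Gamma$-substructure (so $A$ is a $\Q$-subspace of $V_1$, $\Delta\subseteq\Gamma_1$, and $v(A\setminus\{0\})\subseteq\Delta$), and let $f\colon(A,\Delta)\hookrightarrow(V_2,\Gamma_2)$ be an $\mathcal{L}_\Gamma$-embedding. It suffices to extend $f$ to the substructure generated by one further element. There are two cases: the new element is some $\gamma\in\Gamma_1\setminus\Delta$, or it is some $a\in V_1\setminus A$. (We also assume, harmlessly, that $v$ is onto $\Gamma$ in all models of $T_\Gamma$.)

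The first case is handled purely on the $\Gamma$-side. Since $v$ is only a partial function into $\Gamma$, no element of $A$ imposes any condition on $\gamma$ beyond its $\mathcal{L}_O$-type over $\Delta$, and the substructure generated by $\Delta\cup\{\gamma\}$ adds nothing to the $V$-sort. So we only need to realize $\tp_{\mathcal{L}_O}(\gamma/\Delta)$ in $\Gamma_2$; this type is determined by quantifier-free $\mathcal{L}_O$-formulas because $\langle\Gamma,<\rangle$ has quantifier elimination in $\mathcal{L}_O$, hence it is realized by saturation.

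The second case is the substantive one. Using $v(\lambda x)=v(x)$ and the fact that $\mathcal{L}_O$ is relational, the substructure generated by $A\cup\{a\}$ has $V$-part $A+\Q a$ and $\Gamma$-part $\Delta\cup\{v(a-c):c\in A\}$, so we must produce $b\in V_2$ making $a\mapsto b$ an isomorphism onto its image; that is, $b$ must realize the correct cut of $f(A)$ and the correct ``value profile'' $c\mapsto v(b-f(c))$. Set $S=\{v(a-c):c\in A\}\subseteq\Gamma_1$. If $S$ has a greatest element $\gamma_0=v(a-c_0)$, then the valuation triangle inequality forces $v(a-c)=\min(\gamma_0,v(c_0-c))$ for every $c\neq c_0$ (and $v(a-c)=\gamma_0$ whenever $v(c_0-c)=\gamma_0$, by maximality), so the $\Gamma$-part of the new substructure is just $\Delta\cup\{\gamma_0\}$; applying the first case we may assume $\gamma_0\in\Delta$, and it then remains to find $b\in V_2$ with $b-f(c_0)$ of value $f(\gamma_0)$, of the right sign, and avoiding the finitely-many-at-a-time balls around points $f(c)$ that would push $v(b-f(c))$ above $f(\gamma_0)$; such a $b$ exists by surjectivity of $v$ together with saturation. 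If $S$ has no greatest element, pick $(c_i)$ in $A$ with $\{v(a-c_i)\}$ strictly increasing and cofinal in $S$; then $(c_i)$ is pseudo-Cauchy and $a$ is a pseudo-limit of it. Using quantifier elimination for $\langle\Gamma,<\rangle$ and saturation, transfer the strictly increasing sequence $\{v(a-c_i)\}$ to a sequence in $\Gamma_2$ realizing the correct $\mathcal{L}_O$-type over $\Delta$; then, again by surjectivity of $v$ and saturation, find a pseudo-limit $b\in V_2$ of $(f(c_i))$ with these prescribed values $v(b-f(c_i))$, chosen within the correct cut of $f(A)$ (possible since the set of pseudo-limits is convex). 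In either subcase one checks directly that $a\mapsto b$ extends $f$ to an embedding of the generated substructure.

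The main obstacle is the bookkeeping in the second case: one must satisfy simultaneously, inside $(V_2,\Gamma_2)$, both the order datum (the cut of $a$ over $A$) and the valuation datum (the whole function $c\mapsto v(a-c)$, which in the no-maximum subcase amounts to an entire new cut of elements of $\Gamma$), and verify these are mutually consistent — this is exactly where scalar invariance of $v$, surjectivity of $v$, density of $<$, and the quantifier elimination of $\langle\Gamma,<\rangle$ are used, while the pseudo-Cauchy dichotomy and the existence of pseudo-limits in saturated valued vector spaces are the facts inherent in \cite{Kuhlmann1997}.
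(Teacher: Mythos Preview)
Your outline is correct and follows the standard Kuhlmann-style embedding argument via the dichotomy on whether $S=\{v(a-c):c\in A\}$ has a maximum, which is a genuinely different organization from the paper's. The paper also runs a back-and-forth, but after reducing to the case where every $v(\lambda b_0+a)$ already lies in $\Gamma'$ it does not invoke pseudo-Cauchy sequences at all; instead it analyzes by hand finite systems of the shape $\{a_0<x<a_1\}\cup\{v(x+d^i_l)=c_l\}\cup\{v(x)=c\}$, splitting on the relative positions of $v(a_0),v(a_1),c$ and the $-d^i_l$, and exhibits in each subcase an explicit solution of the form $-d^0_n+\lambda t$ for suitable $\lambda\in\Q$ and $t$ with $v(t)=c_n$. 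Your route is more conceptual and closer to what the paper means by ``inherent in \cite{Kuhlmann1997}''; the paper's route is entirely self-contained and never appeals to existence of pseudo-limits.

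One point in your subcase~(a) is genuinely underspecified. Saying $b-f(c_0)$ has ``the right sign'' does not pin down the cut of $b$ over $f(A)$: whenever $c\in A$ satisfies $v(c-c_0)=\gamma_0$, the relative order of $a$ and $c$ is not determined by the valuation data, so you must also impose the full cut of $a$ over the $\gamma_0$-ball $\{c\in A:v(c-c_0)\geq\gamma_0\}$ and check that these order constraints are finitely satisfiable together with $v(b-f(c_0))=f(\gamma_0)$ and $v(b-f(c))\leq f(\gamma_0)$. Your appeal to ``surjectivity of $v$ together with saturation'' asserts this compatibility without proving it; verifying it is exactly the content of the paper's case analysis (and of the relevant lemmas in \cite{Kuhlmann1997}). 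By contrast, your subcase~(b) is fine: since no element of $A$ is a pseudo-limit of $(c_i)$ and the set of pseudo-limits is convex, the cut is forced, and any pseudo-limit $b$ of $(f(c_i))$ automatically has $v(b-f(c_i))=f(v(c_{i+1}-c_i))\in f(\Delta)$, so no new $\Gamma$-elements arise there.
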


\begin{proof}  We will apply the criterion in Fact~\ref{qe_crit} above. Let $\mf{B}_0=(V_0, \Gamma_0)$ and $\mf{B}_1=(V_1, \Gamma_1)$ be models of $T_{\Gamma}$.  Let $\mf{A}=(V', \Gamma')$ be a substructure of both $\mf{B}_0$ and $\mf{B}_1$ with $A \not= B_0$.   Furthermore assume that $\mf{B}_1$ is $|A|^+$-saturated. To establish quantifier elimination it suffices to find $b_0 \in B_0 \setminus A$ and $b_1 \in B_1$ so that 
$\qftp(b_0/A)=\qftp(b_1/A)$.

First suppose that there is $b_0 \in \Gamma_0 \setminus \Gamma'$.  Then we easily find $b_1 \in V_1$ as desired by quantifier elimination in the language $\mc{L}_{O}$.  Hence we may assume that $\Gamma_0=\Gamma'$.

Now let $b_0 \in B_0 \setminus A$.  We need to find a realization of $\qftp(b_0/A)$ in $B_1$.  By compactness and the saturation of $\mf{B}_1$ it suffices to realize any finite $\Delta(x) \subseteq \qftp(b_0/A)$.  Under our assumptions and after some simple rearrangements, $\Delta(x)$ may be assumed to be of the form
\[\{a_0 < x < a_1\} \cup \{v(x)=c\} \cup \{v(x-d_i)=c_i: 0 \leq i \leq n\},\]
where $a_0, a_1 \in V' \cup \{-\infty, \infty\}$; $c, c_i \in \Gamma'\setminus \{\gamma^*\}$; and $d_0 < d_1< \dots < d_n \in V'$.  Also as $a_0$, $a_1$ and all of the $d_j$ lie in $V'$ we may assume that $(a_0,a_1) \cap \{d_0, \dots, d_n\}=\emptyset$ (since otherwise if say $d_0 \in (a_0, a_1)$ then we can replace $a_0<x<a_1$ in $\Delta(x)$ by either $a_0 < x < d_0$, $d_0<x<a_1$, or $x=d_0$).

We begin by simplifying $\Delta(x)$.  We first claim that without loss of generality all $c_i \geq c$. Otherwise suppose that for example  $c_0<c$.  As $b_0$ realizes $\Delta(x)$ we have that $v(b_0)=c$ and $v(b_0-d_0)=c_0$.  Also 
$v(b_0-d_0) \geq \min\{c, v(d_0)\}$ and thus it must be the case that $v(d_0)<c$ and so \[v(b_0-d_0)=v(d_0)=c_0.\]
But then for any $b \in B_0$ if $v(b)=c$ it follows that $v(b-d_0)=c_0$.  As this will hold in any model of $T_{\Gamma}$ together with the open diagram of $\mf{A}$ the formula $v(x-d_0)=c_0$ is superfluous and can without loss of generality be eliminated from $\Delta(x)$.

Arguing similarly we may also assume that there is a single $c'\geq c$ so that $c_i=c'$ for all $0 \leq i \leq n$.

Next we may assume that $v(d_i-d_j)=c'$ for all $0 \leq i < j \leq n$.  First suppose that 
$v(d_i-d_j)<c'$.  But $v(b_0-d_j)=v(b_0-d_i+d_i-d_j)=v(d_i-d_j)<c'$ which is impossible.   Now suppose that $v(d_i-d_j)>c'$.
Suppose that $v(b-d_i)=c'$ for some $b \in B_0$.  Thus $v(b-d_j)=v(b-d_i+d_i-d_j)=v(b-d_i)=c'$.  Hence $v(x-d_i)=c'$ implies that $v(x-d_j)=c'$.  Again this will hold in any model of $T_{\Gamma}$ together with the open diagram of $\mf{A}$ and thus by eliminating formulas form $\Delta(x)$ we can assume without loss of generality that $v(d_i-d_j)=c'$ for all $0 \leq i < j \leq n$.  

Finally by similar simple arguments we can assume that $c'=c$.
Thus we need to show that:
\[\Delta(x)=\{a_0 < x < a_1 \} \cup \{v(x)=c\} \cup \{v(x-d_i)=c : 0 \leq i \leq n\}\] is realized in $\mf{B}_1$ given that it is realized in $\mf{B}_0$.

We will assume that $0 \leq a_0$ (the case that $a_1 \leq 0$ is identical). Notice that $v(d_i) \geq c$ for all $i$ since otherwise $\Delta(x)$ cannot be realized in $\mf{B}_0$.

We work in $\mf{B}_1$.  First suppose that $d_i<a_0<a_1<d_{i+1}$.  This implies that that $v(a_0) \geq v(d_i) \geq c$ and $v(a_1) \leq c$ and thus in turn $v(a_1)=v(d_{i+1})=c$.  Also $v(a_1-d_i) =c$, $v(d_{i+1}-a_0)=c$, $v(a_0-d_i) \geq c$, and $v(d_{i+1}-a_1) \geq c$.  We claim that $\frac{1}{2}(a_0+a_1)$ must realize $\Delta(x)$. 
 The fact that $v(\frac{1}{2}(a_0+a_1))=c$ is immediate.  Note that $v(\frac{1}{2}(a_0+a_1)-d_i)=v(a_0+a_1-2d_i)=v(a_0-d_i+a_1-d_i)$.  As $v(a_1-d_i)=c$, if $v(a_0-d_i)>c$ then $v(\frac{1}{2}(a_0+a_1)-d_i)=c$. Otherwise $v(a_0-d_i)=c$ and then also $v(\frac{1}{2}(a_0+a_1)-d_i)=c$ by convexity of $v$.  Checking that $v(d_{i+1}-\frac{1}{2}(a_0+a_1))=c$ is identical.  Also that $v(d_j-\frac{1}{2}(a_0+a_1))=c$ for $j \notin \{i, i+1\}$ follows easily.



Now assume that $d_n<a_0$.  (The case that $a_1<d_0$ is symmetric.)  It must be the case that $v(a_0-d_n) \geq c$ and that $v(a_1-d_n) \leq c$.  By the axioms for $T_{\Gamma}$ there must be $b_1 \in B_1 \cap (a_0, a_1)$ so that $v(b_1)=c$.  Any such $b_1$ realizes $\Delta(x)$. 
\end{proof}

Fix $(\Gamma, <)$ a linear order with largest element and let $\mathcal{X} \subseteq \bigcup_{n \in \omega}\mathscr{P}(\Gamma^n)$.  Let $\mathcal{L}_{OP}$ be a relational language with $\{<, P\}_{P \in \mathcal{X}} \subseteq \mathcal{L}_{OP}$ in which the theory of  the structure $\Gamma_{\mathcal{X}}=\langle \Gamma, <, P\rangle_{P \in \mathcal{X}}$ eliminates quantifiers.  We can now naturally expand the structure $(V,\Gamma)$ to a structure, $\mathcal{R}_{\mathcal{X}}$ in the language $\mathcal{L}_{\mathcal{X}}=\mathcal{L}_{OP} \cup \mathcal{L}_{VS} \cup \{v\}$.  Let $T_{\mathcal{X}}=\Th(\mathcal{R}_{\mathcal{X}})$.  As $\Gamma_{\mathcal{X}}$ eliminates quantifiers in $\mathcal{L}_{\mathcal{X}}$ arguing almost identically to the above proposition we have:

\begin{prop} The theory $T_{\mathcal{X}}$ eliminates quantifiers in the language $\mathcal{L}_{\mathcal{X}}$.
\end{prop}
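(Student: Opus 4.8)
The plan is to replay the back-and-forth substructure test used in the proof of the preceding proposition, paying attention only to the points where the extra predicates $P \in \mathcal{X}$ on the value set $\Gamma$ intervene. Concretely, I would take $\mf{B}_0 = (V_0, \Gamma_0)$ and $\mf{B}_1 = (V_1, \Gamma_1)$ to be models of $T_{\mathcal{X}}$ with a common finitely generated substructure $\mf{A} = (V', \Gamma')$, assume $\mf{B}_1$ is $\omega$-saturated, fix $b_0 \in B_0$, and produce $b_1 \in B_1$ with $\qftp(b_0/A) = \qftp(b_1/A)$.

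For the case $b_0 \in \Gamma_0$, the only change from before is that one invokes quantifier elimination for $\Gamma_{\mathcal{X}}$ in $\mathcal{L}_{OP}$ rather than for $(\Gamma,<)$ in $\mathcal{L}_{O}$. Since $\Gamma'$ embeds into both $\Gamma_0$ and $\Gamma_1$ as an $\mathcal{L}_{OP}$-substructure, the $\mathcal{L}_{OP}$-quantifier-free type of $b_0$ over $\Gamma'$ is realized in $\Gamma_1$, and because no $\mathcal{L}_{\mathcal{X}}$-term applied to an argument in $\Gamma$ leaves $\Gamma$, this already determines $\qftp(b_0/A)$.

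For the case $b_0 \in V_0$, I would carry out, verbatim, the reductions from the proof of the preceding proposition: first arranging $v(b_0) \in \Gamma'$, then arranging $v(\lambda b_0 + a) \in \Gamma'$ for every $\lambda \in \Q$ and $a \in A$, since those reductions only shuffle parameters inside $V_0$ and appeal to the $\Gamma$-case above, which still holds. Once this is done, a quantifier-free $\mathcal{L}_{\mathcal{X}}$-formula in $x$ over $A$ decomposes as the conjunction of a vector-space part (inequalities $a_0 < x < a_1$ with $a_i \in V'$), a valuation part (conditions $v(\lambda x + d) = c$ with $d \in V'$, $c \in \Gamma'$), and an $\mathcal{L}_{OP}$-part asserting $\mathcal{L}_{OP}$-relations among the values $v(\lambda_1 x + d_1), \ldots, v(\lambda_k x + d_k)$. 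But since every such value has been forced into the common substructure $\Gamma'$, the $\mathcal{L}_{OP}$-part imposes no new constraint and is automatically satisfied once the first two parts are realized. Hence it suffices to realize the type given by the vector-space and valuation parts, and the consistency of that system in $\mf{B}_1$ — reduced to a quantifier-free criterion on the parameters — is exactly what was established in the proof of the preceding proposition.

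The main (and essentially only) point requiring care is verifying that the new relations from $\mathcal{X}$ add no constraint in the case $b_0 \in V_0$; this hinges on the reduction to ``$v(\lambda b_0 + a) \in \Gamma'$ for all relevant $\lambda, a$'' still being available, which it is, now that $\Gamma_{\mathcal{X}}$ has quantifier elimination. One should also record the structural observation that the predicates in $\mathcal{X}$ never interact with the domain $V$ of $v$, only with its codomain $\Gamma$, so that quantifier-free $\mathcal{L}_{\mathcal{X}}$-formulas really do split as a vector-space part, a valuation part, and an $\mathcal{L}_{OP}$-part on $\Gamma$ — which is precisely what allows the earlier analysis to be reused without modification.
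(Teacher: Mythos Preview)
Your proposal is correct and takes essentially the same approach as the paper, which simply remarks that one argues ``almost identically to the above proposition.'' You have correctly isolated the only point requiring attention---that the new $\mathcal{L}_{OP}$-relations on $\Gamma$ impose no additional constraint once all relevant values $v(\lambda b_0 + a)$ have been forced into the common substructure $\Gamma'$---and explained why the earlier reductions still go through using quantifier elimination for $\Gamma_{\mathcal{X}}$ in place of that for $(\Gamma,<)$.
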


Let $\mf{M}=(V^*, \Gamma^*)$  be a model of $ T_{\mc{X}}$.  Let $\mf{M}^1$ be the vector space sort, $V^*$,  with the induced structure from $\mf{M}$ (i.e. we add a predicate for every $\emptyset$-definable, in $\mf{M}$, set $X \subseteq (V^*)^n$).  Let $T^1_{\mc{X}} = \text{Th}(\mf{M}^1)$.
Using quantifier elimination for $T_{\mc{X}}$ we can easily show:

\begin{prop} $T_{\mathcal{X}}^1$ is viscerally ordered.
\end{prop}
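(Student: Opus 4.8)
The plan is to use the quantifier elimination for $T^1_{\mathcal{X}}$ just proved to read off the shape of the definable subsets of the vector-space sort, and then reduce viscerality to a single short topological fact about $v$. Density of the order is immediate, since every model of $T^1_{\mathcal{X}}$ is a nontrivial ordered $\Q$-vector space and hence densely ordered. For the interior clause, fix a model $\mathfrak M\models T^1_{\mathcal{X}}$; a routine observation (using that ``equal valuation'' is $\emptyset$-definable, being $\neg(v(x)<v(y))\wedge\neg(v(y)<v(x))$ among the reinterpreted $\mathcal{L}_{OP}$-relations) is that every such $\mathfrak M$ is again of the form $\mathcal{R}^1_{\mathcal{X}}$ for a valued ordered vector space $(M,\Gamma)$, so we may work in $\mathcal{R}^1_{\mathcal{X}}$ directly, equipped with its valuation $v\colon M\setminus\{0\}\to\Gamma$ obeying the four valuation axioms and with each $R\in\mathcal{L}_{OP}$ interpreted through $v$.

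The key lemma is: for every $c\in M\setminus\{0\}$ the $v$-sphere $S_c:=\{y\in M : v(y)=v(c)\}$ is open. To see this, assume $c>0$ (the case $c<0$ being symmetric via $v(-y)=v(y)$); for any $y$ in the open interval $(c/2,\,2c)$ we have $v(y)\le v(c/2)=v(c)$ because $0<c/2<y$, and $v(y)\ge v(2c)=v(c)$ because $0<y<2c$, so $(c/2,2c)\subseteq S_c$. Thus $S_c$ is a neighbourhood of $c$, and running the same argument at an arbitrary point of $S_c$ shows $S_c$ is open; hence so is every translate $f+S_c$, since translations are order-automorphisms.

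By quantifier elimination, a definable $X\subseteq M$ is a finite Boolean combination of atomic formulas, each of which is either a linear condition equivalent to one of $x=c$, $x<c$, $x>c$, or a formula $R(t_1(x),\dots,t_n(x))$ with $R\in\mathcal{L}_{OP}$ and the $t_i$ $\Q$-linear terms. Since $R$ is interpreted through $v$ and $v(t_i(x))$ is either a fixed element of $\Gamma$ or equal to $v(x-f_i)$ for a suitable ``centre'' $f_i$, such a formula defines a set of the form $\{x:(v(x-f_1),\dots,v(x-f_q))\in\Psi\}$ for finitely many centres $f_i$ and some $\Psi\subseteq\Gamma^q$. Passing to disjunctive normal form, $X$ is a finite union of disjuncts of the form $(I\setminus F)\cap Y$ with $I$ an interval, $F$ finite, and $Y=\{x:(v(x-f_1),\dots,v(x-f_q))\in\Psi^{*}\}$; since $X$ is infinite one disjunct $D=(I\setminus F)\cap Y$ is infinite, and it suffices to prove $D$ has nonempty interior.

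The point is that $Y$ is open off its finitely many centres: for $x_0\notin\{f_1,\dots,f_q\}$ the set $J_{x_0}:=\bigcap_{i=1}^{q}\bigl(f_i+S_{x_0-f_i}\bigr)$ is a finite intersection of open sets, contains $x_0$, contains no $f_i$, and satisfies $v(y-f_i)=v(x_0-f_i)$ for all $y\in J_{x_0}$ and all $i$; hence $x_0\in Y$ forces $J_{x_0}\subseteq Y$, so $Y\setminus\{f_1,\dots,f_q\}$ is open. Then $W:=\inte(I)\cap(Y\setminus\{f_1,\dots,f_q\})$ is open and differs from $D$ by only finitely many points (those in $I\setminus\inte(I)$, those in $\{f_i\}$, or those in $F$), hence is infinite; and $W\setminus F$, an infinite subset obtained from the open set $W$ by deleting a finite set in a densely ordered space, contains an open interval, which lies inside $D$. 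Thus $D$, and so $X$, has nonempty interior, proving viscerality. The only genuine obstacles here are bookkeeping ones: extracting the centres $f_i$ from arbitrary $\Q$-linear terms, checking that passage to $\inte(I)$ and removal of finite sets preserves infiniteness, and making precise the harmless reconstruction of the valuation in an arbitrary model; all of the topological content sits in the openness of $v$-spheres.
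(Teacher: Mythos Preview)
Your proof is correct and is precisely the argument the paper has in mind: the paper states the proposition without proof, treating it as immediate from quantifier elimination, and the very next proposition's proof makes explicit the key observation you use---namely that any atomic formula not involving $<$ or $=$ defines an open set (your ``$v$-spheres are open'' lemma is exactly what makes this true). Your write-up simply fleshes out the details the authors left to the reader.
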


In some senses the theory $T_{\mc{X}}^1$ is quite well-behaved.  We have:

\begin{prop} Let $\mf{M}=(V^*, \Gamma^*)$ be a model of  $T_{\mc{X}}$.  Suppose that $b \in V^*$, $A \subseteq V^*$ and $b \in \dcl(A)$.  Then $b$ is in the $\Q$-linear span of $A$.  In particular $T^1_{\mc{X}}$ satisfies exchange for definable closure.
\end{prop}

\begin{proof}   Let $\varphi(x,\ob{a})$ be a formula with $\ob{a} \subseteq A$, $x$ in the vector space sort and $\varphi(M, \ob{a})$ finite.  Applying quantifier elimination we may without loss of generality assume that $\varphi(x, \ob{a})$ is of the form $\alpha_1(x) \wedge \dots \wedge \alpha_n(x) \wedge \beta_1(x) \wedge \dots \wedge \beta_m(x)$ where the $
\alpha_i$ and $\beta_j$ are atomic or negated atomic formulae, the $\alpha_i$ are in the language of ordered rational vector spaces and the $\beta_i$ involve predicates from the language $\mc{L}_{OP}$.  By the o-minimality of rational ordered vector spaces $\alpha_1(M) \wedge \dots \wedge \alpha_n(M)$ consists of finitely many points and open intervals and all the isolated points and endpoints of the intervals must lie in the linear span of $A$.  Each $\beta_i(x)$ is of the form $A(v(t_1(x)), \dots, v(t_r(x)))$ where $A$ is a predicate or the negation of a predicate from $\mc{L}_{OP}$ and the $t_j$ are linear terms with parameters from $A$ (note some of these terms may not include the variable $x$).   It is straightforward to see that $\beta_i(M)$ is a disjoint union of clopen convex sets and finitely many points.  Futhermore these finitely many points are solutions to $t_j(x)=0$ for some $1 \leq j \leq r$ and thus lie in the $\Q$-linear span of $A$.  It follows that any element of $\varphi(M, \ob{a})$ must lie in the $\Q$-linear span of $A$.
\end{proof}

Thus via Proposition \ref{no_space} we have:

\begin{cor}  $T^1_{\mc{X}}$ has no space-filling functions.
\end{cor}

Recall that any dp-minimal theory extending that of divisible ordered Abelian groups is viscerally ordered \cite{S}. One can of course na\"{i}vely ask whether the converse holds.  We show that this is false in a very strong sense.

\begin{prop}\label{IP_example}  There is a viscerally ordered theory that interprets arithmetic.
\end{prop}

\begin{proof}  Let $\Gamma=(\omega+1, < )$ and let $\mc{X} =\bigcup_{n \in \omega}\mathscr{P}(\Gamma^n)$.  Then clearly 
models of $T^1_{\mc{X}}$ interpret models of  $\text{Th}(\langle \N, +, \cdot\rangle$).
\end{proof}

Next as noted earlier we show how to obtain viscerally ordered locally o-minimal theories.  For a linear order $\Gamma$ let $\Gamma_{<\gamma}=\{x \in \Gamma: x<\gamma\}$ where $\gamma \in \Gamma$.

\begin{prop}Let $(\Gamma,<, \gamma^*)$ be a dense linear order with no left endpoint and right endpoint $\gamma^*$. Let $\gamma \in \Gamma$.  If $\mathcal{X} \subseteq \bigcup_{n \in \N}\mathscr{P}((\Gamma_{<\gamma})^n)$, then $T^1_{\mathcal{X}}$ is locally o-minimal.
\end{prop}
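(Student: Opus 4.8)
\textit{Proof idea.} We must show that $T^1_{\mathcal{X}}$ is locally o-minimal, so fix $\mathfrak{M}\models T^1_{\mathcal{X}}$, a finite tuple $\ob{a}$ from $M$, a set $X\subseteq M$ definable over $\ob{a}$, and a point $c\in M$; we must produce $\varepsilon>0$ with $X\cap[c,c+\varepsilon)$ and $X\cap(c-\varepsilon,c]$ each equal to one of $\emptyset$, $\{c\}$, $(c,c+\varepsilon)$, $[c,c+\varepsilon)$ (respectively the left-hand analogues). We treat only the right-hand side; the left is identical with $\delta$ replaced by $-\delta$ below. Since $T^1_{\mathcal{X}}$ has quantifier elimination in $\mathcal{L}^1_{\mathcal{X}}$, we may write $X$ as a Boolean combination of finitely many atomic $\mathcal{L}^1_{\mathcal{X}}$-formulas $\theta_1(x,\ob{a}),\dots,\theta_N(x,\ob{a})$. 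Since the four sets above form a Boolean subalgebra of $\mathscr{P}([c,c+\varepsilon))$, it is enough to find, for each $j$, some $\varepsilon_j>0$ with $\theta_j(M,\ob{a})\cap[c,c+\varepsilon_j)$ in this algebra, and then take $\varepsilon=\min_j\varepsilon_j$.

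Every $\mathcal{L}^1_{\mathcal{X}}$-term is an $\mathcal{L}_{VS}$-term, hence $\Q$-linear in $x$ and $\ob{a}$. Thus an atomic formula of the form $t_1=t_2$ or $t_1<t_2$ (with $<$ the vector-space order) has solution set $\emptyset$, all of $M$, a point, or a ray, and its trace on a short enough $[c,c+\varepsilon)$ is at once one of the four allowed sets (using that $M$ is densely ordered). The remaining atomic formulas have the form $\theta(x)\equiv R(t_1(x,\ob{a}),\dots,t_n(x,\ob{a}))$ with $R$ an $n$-ary symbol of $\mathcal{L}_{OP}$, and by construction $\theta(x)$ holds in $\mathfrak{M}$ exactly when the interpreted value structure $\mathfrak{G}$ of $\mathfrak{M}$ (a model of $\Th_{\mathcal{L}_{OP}}(\Gamma_{\mathcal{X}})$, with universe $\Gamma'$ and interpreted valuation $v$) satisfies $R(v(t_1),\dots,v(t_n))$. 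Assuming each $t_i$ is a nonzero term (otherwise $\theta$ reduces to a relation of smaller arity), write $b_i=t_i(c)$ so that $t_i(c+\delta)=b_i+q_i\delta$ with $q_i\in\Q$, and call $i$ a \emph{scaling index} if $q_i\neq 0$ and $b_i=0$.

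By the hypothesis $\mathcal{X}\subseteq\bigcup_n\mathscr{P}(\Gamma_{<\gamma}^n)$, the standard model $\mathcal{R}^1_{\mathcal{X}}$ contains a positive $w$ with $v(w)=\gamma$ satisfying $P(\ob{y})\to\bigwedge_i v(y_i)<_\Gamma v(w)$ for every $P\in\mathcal{X}$ (we assume, harmlessly, that $v$ is onto $\Gamma$); as this is first-order, $\mathfrak{M}$ also has such a $w$ working for the finitely many $P$ in play. Choose $\varepsilon>0$ small enough that every $0<\delta<\varepsilon$ has $v(\delta)>_\Gamma v(w)$ and $v(\delta)>_\Gamma v(b_i)$ for each non-scaling $i$ with $b_i\neq 0$; this is possible because the value order has no top element and $v$ is onto it, so some positive $u$ has $v(u)$ above these finitely many values, and then $0<\delta<u$ forces $v(\delta)\geq_\Gamma v(u)$. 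For $0<\delta<\varepsilon$ the valuation axioms now give $v(t_i(c+\delta))=v(b_i)$ for non-scaling $i$ (a fixed value, independent of $\delta$) and $v(t_i(c+\delta))=v(\delta)$ for scaling $i$; write $\ob{g}(\delta)$ for the tuple $(v(t_1(c+\delta)),\dots,v(t_n(c+\delta)))$. I claim $\ob{g}(\delta_1)$ and $\ob{g}(\delta_2)$ have the same type over $\emptyset$ in $\mathfrak{G}$ for all $0<\delta_1,\delta_2<\varepsilon$. Granting this, $R(\ob{g}(\delta_1))\leftrightarrow R(\ob{g}(\delta_2))$ (since $R$ is $\emptyset$-definable in the value structure from $<$ and the $P$'s), so $\theta$ is constant on $(c,c+\varepsilon)$ and $\theta(M,\ob{a})\cap[c,c+\varepsilon)$ is one of the four sets.

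To prove the claim, pass to a saturated $\mathfrak{G}^{*}\succ\mathfrak{G}$, with universe $\Gamma^{*}$. The tuples $\ob{g}(\delta_1),\ob{g}(\delta_2)$ agree in the non-scaling coordinates, and in the scaling coordinates equal $v(\delta_1)$, resp.\ $v(\delta_2)$, each of which lies above $v(w)$ and above all the non-scaling coordinates. Since $\langle\Gamma^{*}_{>v(w)},<\rangle$ is a saturated dense linear order without endpoints and $v(\delta_1),v(\delta_2)$ both realize the unique order-type over the finite set $F$ of non-scaling coordinates exceeding $v(w)$, there is an order-automorphism of $\langle\Gamma^{*}_{>v(w)},<\rangle$ fixing $F$ pointwise and sending $v(\delta_1)\mapsto v(\delta_2)$; extending it by the identity on $\Gamma^{*}_{\leq v(w)}$ yields a bijection $\sigma$ of $\Gamma^{*}$ preserving $<$ and every $P\in\mathcal{X}$ — the latter because $\sigma$ is the identity below $v(w)$ while, by the choice of $w$, no $P$-tuple meets $\Gamma^{*}_{>v(w)}$. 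Hence $\sigma\in\Aut(\mathfrak{G}^{*})$ and $\sigma(\ob{g}(\delta_1))=\ob{g}(\delta_2)$, giving the type equality. The main obstacle is exactly this last step, and it is where the hypothesis on $\mathcal{X}$ enters: since all the structure of $\mathcal{X}$ sits below $\gamma$, the value structure is a pure dense linear order above $v(w)=\gamma$, so the ``scaling'' valuations $v(\delta)$ may be slid freely; one should also take some care to check that the interpreted value structure of an arbitrary model really does satisfy the first-order facts used above (no top element, $v$ surjective onto it, and $P(\ob{y})\to\bigwedge_i v(y_i)<_\Gamma v(w)$), all of which follow from the construction of $\mathcal{R}^1_{\mathcal{X}}$ together with quantifier elimination.
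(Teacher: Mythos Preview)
Your proof is correct, but it takes a different route from the paper's. The paper first proves a Claim: one can choose the Morleyization language $\mathcal{L}_{OP}$ so that every relation symbol other than $<$ is interpreted inside $(\Gamma_{<\gamma})^n$. Given this, the local o-minimality check for an atomic formula $R(t_1(x,\ob a),\ldots,t_n(x,\ob a))$ with $R\neq<$ becomes almost immediate: near a point $c$, either no $t_i$ vanishes at $c$, so each $v(t_i)$ is locally constant and the formula is locally constant; or some $t_i(c)=0$, so $v(t_i(c+\delta))$ exceeds $\gamma$ for small $\delta$ and the formula is locally false. The paper summarizes this as ``the convex pieces do not accumulate,'' without spelling out the details.

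You instead work with an arbitrary Morleyization and replace the boundedness of $R$ by an automorphism argument in a saturated elementary extension of the value structure: you show that for small $\delta$ the tuple $(v(t_1(c+\delta)),\ldots,v(t_n(c+\delta)))$ has the same $\{<,P\}_{P\in\mathcal X}$-type regardless of $\delta$, by exhibiting an automorphism that is the identity below $v(w)$ (where all $P$-tuples live) and slides $v(\delta_1)$ to $v(\delta_2)$ above. This is a legitimate and more self-contained argument, and in fact you supply considerably more justification than the paper does. Two small points worth tightening: your $\sigma$ is really an automorphism of the $\{<,P_1,\ldots,P_m\}$-\emph{reduct} of $\mathfrak G^*$ (for the finitely many $P_j$ relevant to the given $R$), which is all you need since $R$ is $\emptyset$-definable from those; and the existence of the order-automorphism of $\Gamma^*_{>v(w)}$ fixing $F$ requires enough homogeneity, which you get by taking $\mathfrak G^*$ genuinely saturated (so strongly homogeneous), not merely $\omega$-saturated.
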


\begin{proof} For convenience we may assume that $\Gamma_{<\gamma} \in \mc{X}$.  We need to find a reasonable language in which the structure $\Gamma_{\mathcal{X}}=(\Gamma,<,P)_{P \in \mathcal{X}}$ has quantifier elimination.  

\begin{claim}\label{boundqe}  There is a relational language $\mathcal{L}_{OP}$ in which the theory of $\Gamma_{\mathcal{X}}$ has quantifier elimination so that for all symbols $R \in \mathcal{L}_{OP}$ with $\{R\} \notin \{<, \gamma*$\} the interpretation of $R$ in $\Gamma$ is contained in $\bigcup_{n \in N}\mathscr{P}((\Gamma_{<\gamma})^n)$.
\end{claim}
\begin{proof}  We can consider $\Gamma_{<\gamma}$ as a structure in the language $\{<,P\}_{P \in \mathcal{X}}$ and let $\mathcal{L}_{OP}'$ be any larger relational language in which this structure has quantifier elimination.   $\Gamma_{\mathcal{X}}$ may be expanded into an $\mathcal{L}_{OP}=\mathcal{L}_{OP}' \cup \{\gamma^*\}$ structure by interpreting any new predicate, $R$, in $\mathcal{L}_{OP}$ exactly as it was interpreted in $\Gamma_{<\gamma}$.  It is easy to verify that if $\mathfrak{B}$ and $\mathfrak{C}$ are elementarily equivalent to $\Gamma_{\mathcal{X}}$ as $\mathcal{L}_{OP}$ structures with $\mathfrak{C}$ sufficiently saturated and $\mathfrak{A}$ is a substructure of both $\mathfrak{B}$ and $\mathfrak{C}$ then for any $b \in B$ we may find $c \in C$ so that $\qftp(bA)=\qftp(cA)$.  Hence we have quantifier elimination in the language $\mathcal{L}_{OP}$.
\end{proof}

We need to verify that $T^1_{\mathcal{X}}$ is locally o-minimal.  It suffices to show that if $\mf{M}=(V^*, \Gamma^*)$ is a model of $T_{\mc{X}}$ then any definable subset of $V^*$ meets the criterion for local o-minimailty.  As we have quantifier elimination in the language $\mathcal{L}_{\mathcal{X}}$, to check this it suffices to check that every set $X \subseteq V^*$ defined by an atomic formula with parameters satisfies the condition for local o-minimality.   By viscerality it suffices to check that no atomically defined set can accumulate to a point.  Suppose otherwise and $X$ is atomically definable and accumulates to $a \in V^*$.  After shifting, we may without loss of generality assume that $a=0$.  Also the only interesting case is when $X$ is defined by a formula of the 
form $R(v(t_1(x)), v(t_2(x)), \dots, v(t_n(x)), c_1, \dots, c_m)$ where $R$ is an $n+m$-ary relation symbol in $\mc{L}_{OP}$ different from $<$, the $t_i$ are terms from $\mc{L}_{VS}$, and the $c_j$ are arbitrary elements of $\Gamma^*$.  The terms $t_i(x)$ are of the form $\lambda_ix+d_i$ for $\lambda_i \in \Q$ and $d_i \in V^*$.  Note that if $d_i \not=0$ then the valuation of $t_i(x)$ is equal to that of $d_i$ for all sufficiently small $x$.   Thus at least one of the $d_i=0$ and we can ultimately reduce to considering only a predicate of the form $R(v(x), \dots, v(x), c_1, \dots, c_m)$.   But as $X$ accumulates to $0$ there must be $b \in X$ with arbitrarily large valuation.  Thus the interpretation of $R$ in $\Gamma^*$ must be unbounded but this contradicts our choice of $\mc{L}_{OP}$ in Claim \ref{boundqe}.

\end{proof}

\bibliography{modelth-vis}

\end{document}